\newtheorem{theorem}{Theorem}
\newtheorem{lemma}[theorem]{Lemma}
\newcommand{\changeoperator}[1]{%
  \csletcs{#1@saved}{#1@}%
  \csdef{#1@}{\changed@operator{#1}}%
}
\newcommand{\changed@operator}[1]{%
  \mathop{%
    \mathchoice{\textstyle\csuse{#1@saved}}
               {\csuse{#1@saved}}
               {\csuse{#1@saved}}
               {\csuse{#1@saved}}%
  }%
}
\newcommand{\R}{\mathbb{R}}
\newcommand{\C}{\mathbb{C}}
\newcommand{\Z}{\mathbb{Z}}
\newcommand{\N}{\mathbb{N}}
\newcommand{\bv}{\bar{v}}
\newcommand{\bydef}{\,\stackrel{\mbox{\tiny\textnormal{\raisebox{0ex}[0ex][0ex]{def}}}}{=}\,} 
\newcommand{\proj}[2]{ \prod \limits_{\scriptscriptstyle{#1}}^{\mathsf{\scriptscriptstyle{#2}}}}
\newcommand{\sF}{\scriptscriptstyle{\mathsf{F}}}
\newcommand{\sTF}{\scriptscriptstyle{\mathsf{TF}}}
\newcommand{\sC}{\scriptscriptstyle{\mathsf{C}}}
\newcommand{\sT}{\scriptscriptstyle{\mathsf{T}}}
\newcommand{\Lr}{\mathsf{L}}
\DeclareMathOperator{\fop}{\mathit{f}}
\newif\ifchanges
\newcommand{\add}[1]{\ifchanges\textcolor{blue}{#1}\else#1\fi}
\definecolor{tealink}{RGB}{21, 154, 137}
\definecolor{BrickRed}{rgb}{0.6, 0.2, 0.2}
\begin{document}

\title{Computer-Assisted Proofs of Gap Solitons in Bose-Einstein Condensates }
\author{
Miguel 
Ayala
\thanks
{McGill University, Department of Mathematics and Statistics, 805 Sherbrooke Street West, Montreal, QC, H3A 0B9, Canada. {\tt miguel.ayala@mail.mcgill.ca}.}
\and
Carlos Garc\'{i}a-Azpeitia
\thanks
{Departamento de Matem\'aticas y Mec\'anica,
    IIMAS-UNAM.
    Apdo. Postal 20-126, Col. San \'Angel,
    Mexico City, 01000,  Mexico. {\tt cgazpe@aries.iimas.unam.mx}.}
\and
Jean-Philippe Lessard
\thanks
{McGill University, Department of Mathematics and Statistics, 805 Sherbrooke Street West, Montreal, QC, H3A 0B9, Canada. {\tt jp.lessard@mcgill.ca}.}
}

\maketitle

\begin{abstract}
We provide a framework for turning a numerical simulation of a gap soliton in the one-dimensional Gross-Pitaevskii equation into a rigorous mathematical proof of its existence. These nonlinear localized solutions play a central role in the study of Bose-Einstein condensates (BECs). We reformulate the problem of proving their existence as the search for homoclinic orbits in a dynamical system. We then apply computer-assisted proof techniques to obtain verifiable conditions under which a numerically approximated trajectory corresponds to a true homoclinic orbit. This work also presents the first examples of computer-assisted proofs of gap solitons in the Gross-Pitaevskii equation on non-perturbative parameter regimes. 
\end{abstract}   

\begin{center}
    {\bf \small Key words.}
    { \small Bose-Einstein condensates, Gross-Pitaevskii equation, Gap solitons, Homoclinic orbits, Parameterization method for periodic orbits, Computer-Assisted Proofs}
\end{center}

\section{Introduction} \label{sec:intro}
A Bose--Einstein condensate (BEC) is a state of matter that occurs when a
collection of particles cools down to temperatures near absolute zero, causing
them to lose their individual identities and behave as a single 
wave. The first experimental realization of BECs at ultra-cold temperatures earned the 2001 Nobel Prize in Physics, and since then, Bose-Einstein condensates have provided a platform for exploring quantum mechanics on large scales, with applications in precision measurements, quantum computing, and the modeling of complex systems such as superfluidity and optical lattices.
For a comprehensive review of both experimental and theoretical developments on Bose--Einstein condensates, see \cite{Lous}.

Beyond their physical significance, the study of
Bose--Einstein condensates offers fertile ground for advancing theoretical methods in nonlinear dynamics and partial
differential equations. In this paper, we study the dynamics of a BEC 
using the time-dependent Gross--Pitaevskii equation, which
models the BEC's evolution in one spatial dimension:
\begin{equation}  \label{eq:time-dependent-GP}
i \partial_{t} \psi = -\partial_{x}^{2}\psi+V(x)\psi + c |\psi|^{2}\psi.
\end{equation}
Here, $\psi(t,x) \in \mathbb{C}$ denotes the dimensionless wave function, $%
|\psi|^{2}$ represents the BEC density, and $V(x)$ is the external potential
created by the optical lattice along the spatial domain $x \in \mathbb{R}$.
The book \cite{Kevrekidis2008EmergentNP} provides a detailed treatment of the Gross--Pitaevskii equation and nonlinear wave dynamics, combining experimental perspectives with numerical studies of Bose--Einstein condensates. It also includes an extensive bibliography covering many areas of the field.

A fundamental aspect of Bose-Einstein condensate analysis is the study of standing wave solutions to the Gross--Pitaevskii  equation. These solutions take the form \( \psi(t,x) = e^{-i a t} u(x) \), where \( u(x) \) is a real-valued function that satisfies the time-independent (GP) equation:
\begin{equation}  \label{eq:GP_ode}
(\partial_x^2 + a - V(x)) u - c u^3 = 0.
\end{equation}
Among the nonlinear structures admitted by the GP equation are solitons, nonlinear Bloch waves, and domain walls, each of which has been extensively studied. Our interest lies in \emph{gap solitons}, a class of localized solutions. More specifically, a soliton is a real-valued function \( u : \mathbb{R} \to \mathbb{R} \) that decays to zero at infinity:
\begin{equation}  \label{eq:homoclinic_conditions}
\lim_{x \to \pm \infty} \left( u(x), u'(x) \right) = (0, 0).
\end{equation}
Chapter 19 of \cite{Kevrekidis2008EmergentNP} surveys the existence of solitons under various potential types, with particular emphasis on periodic potentials. In the present work, we study the specific case \( V(x) = b \cos(2x) \), which models the dynamics of a Bose-Einstein condensate in an optical lattice. To understand the setting in which gap solitons arise, we consider the linearization of the Gross--Pitaevskii equation around the trivial solution, known as Mathieu's equation:
\begin{equation}  \label{eq:Mathieu_ODE}
L u \,\overset{\mbox{\tiny\textnormal{\raisebox{0ex}[0ex][0ex]{def}}}}{=}\,
\left( \partial_x^2 + a - V(x) \right) u = \left( \partial_x^2 + a - b
\cos(2x) \right) u = 0.
\end{equation}
Bloch theory predicts that the spectrum of this linear operator consists of bands separated by spectral gaps. The edges of these gaps are determined by solutions to \eqref{eq:Mathieu_ODE}, known as \emph{Mathieu functions}. In the purely linear setting, soliton-type solutions cannot exist. When nonlinearity is introduced, however, localized modes—\emph{gap solitons}—can form within the spectral gaps. These structures are characteristic of nonlinear wave systems and closely resemble gap solitons observed in nonlinear optics (e.g., see \cite{Pelinosvsky}).

Researchers have frequently used perturbative asymptotics and numerical simulations to investigate soliton solutions. For example, \cite{Pelinosvsky} applies asymptotic analytical methods to compute gap solitons in all spectral gaps of a periodic potential, showing that these solitons bifurcate from distinct band edges depending on the sign of \( c \). Moreover, \cite{Alfimov_2002} introduces a numerical approach for calculating gap solitons in the repulsive case (\( c = 1 \)). The study shows that the subset of non-blow-up solutions, which includes all gap solitons, has a fractal structure in the space of initial conditions. This fractal structure makes it possible to identify gap solitons over large regions in the parameter space \( (a, b) \in \mathbb{R}^2 \). However, \emph{while simulations and asymptotic techniques provide valuable insight, they do not provide a rigorous proof of existence. Abstract results, on the other hand, often lack any explicit description of the solution profiles.}


The structure of the problem, however, makes it well-suited for a computer-assisted proof (CAP) approach. In this work, we derive verifiable conditions under which a soliton exists near a given numerical approximation. When these conditions hold, our approach guarantees a rigorous proof of existence and supplies tight, explicit \( C^0 \) error bounds for the discrepancy between the exact solution and its approximation. It is important to emphasize that our method can be applied
to any set of parameters $a$, $b$ and $c$ for which an accurate numerical soliton solution is
available. For example, Figure~\ref{fig:Parts_of_proof} shows a nontrivial numerical approximation of an even soliton solution to equation \eqref{eq:GP_ode}, as appearing in \cite{Alfimov2000}. Our method provides a way to rigorously validate such approximations:
\begin{theorem}
\label{theorem:main}
   The Gross-Pitaevskii equation \eqref{eq:GP_ode} with parameters \( a = 1.1025 \), \( b = 0.55125 \), and \( c = -0.826875 \) has a soliton solution $u:\R \to \R$,
    satisfying
    \[
    \|u - \bar{u}\|_{\infty} \leq 8.617584260554394 \cdot 10^{-6},
    \]
    where \( \bar{u} \) is a numerical approximation of the solution illustrated in Figure~\ref{fig:Parts_of_proof}.
\end{theorem}

\begin{figure}[!ht]
\centering
\includegraphics[width=12cm, trim=0 0 0 1.5cm, clip]{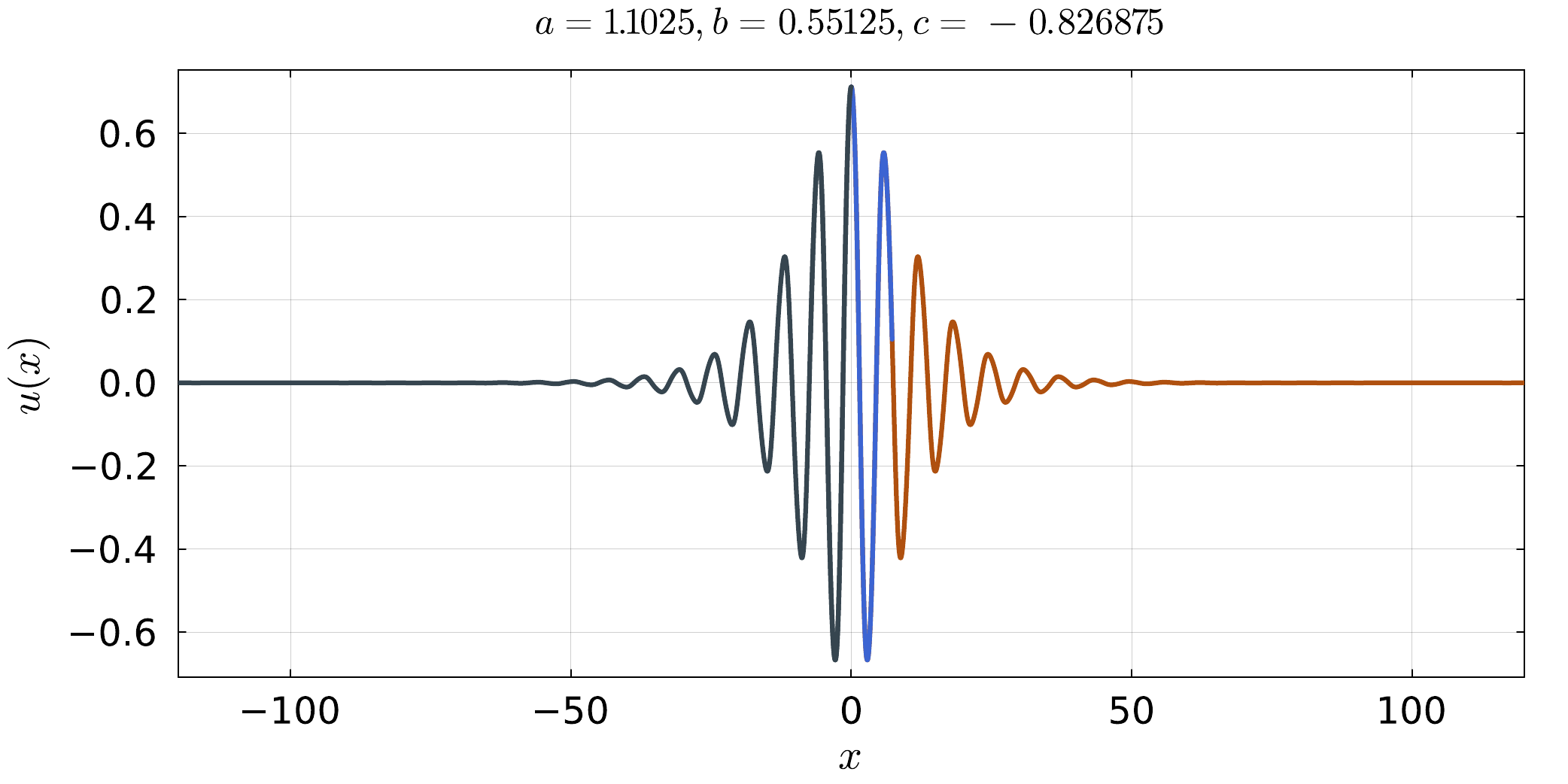}
\caption{
The figure shows a validated soliton solution of the Gross-Pitaevskii (GP) equation with parameters 
\( a = 1.1025 \), \( b = 0.55125 \), and \( c = -0.826875 \). 
It also depicts the main elements of our approach: the solution to the boundary-value problem (blue), 
the stable manifold (orange), and the even extension of the soliton (black).
}
\label{fig:Parts_of_proof}
\end{figure}


To describe our approach, we begin by reformulating the problem using a standard dynamical systems framework. In this setting, finding a gap soliton becomes the search for a connecting orbit between invariant sets.
More precisely, denote $u_{1}\,\overset{\mbox{\tiny\textnormal{%
\raisebox{0ex}[0ex][0ex]{def}}}}{=}\,u$, $u_{2}\,\overset{%
\mbox{\tiny\textnormal{\raisebox{0ex}[0ex][0ex]{def}}}}{=}\,u^{\prime }$, $%
u_{3}\,\overset{\mbox{\tiny\textnormal{\raisebox{0ex}[0ex][0ex]{def}}}}{=}%
\,V(x)=\cos (2x)$ (the periodic potential) and $u_{4}\,\overset{%
\mbox{\tiny\textnormal{\raisebox{0ex}[0ex][0ex]{def}}}}{=}\,u_{3}^{\prime
}=-2\sin (2x)$. Note that $u_{3}$ solves $u_{3}^{\prime \prime }=-4u_{3}$
with initial conditions $(u_{3}(0),u_{3}^{\prime }(0))=(1,0)$. 
This formulation allows us to transform equation $\eqref{eq:GP_ode}$ into a
autonomous polynomial vector field.
Indeed, assume that $%
U\,\overset{\mbox{\tiny\textnormal{\raisebox{0ex}[0ex][0ex]{def}}}}{=}%
\,(u_{1},u_{2},u_{3},u_{4})\in \mathbb{R}^{4}$ is a solution to the
four-dimensional autonomous system 
\begin{equation}
\frac{dU}{dx}=g(U)\,\overset{\mbox{\tiny\textnormal{%
\raisebox{0ex}[0ex][0ex]{def}}}}{=}\,%
\begin{pmatrix}
u_{2} \\ 
-au_{1}+bu_{3}u_{1}+cu_{1}^{3} \\ 
u_{4} \\ 
-4u_{3}%
\end{pmatrix}
\label{eq:4D_system}
\end{equation}%
with initial conditions $u_{3}(0)=1$ and $u_{4}(0)=0$. Then, if $%
u_{1},u_{2}\neq 0$ and the conditions in \eqref{eq:homoclinic_conditions}
are satisfied, the first component $u_{1}$ of $U$ is a gap soliton of the GP
equation \eqref{eq:GP_ode}. 

We emphasize that the system \eqref{eq:4D_system} is not conservative, though it possesses the conserved quantity \( H = \frac{1}{2}(u_4^2 + 4u_3^2) \).
This comes from the fact
that  $(u_{3}(x),u_{4}(x))=(V(x),V^{\prime }(x))$ solves the
Hamiltonian system $u_{3}^{\prime }=\partial _{u_{4}}H(u_{3},u_{4})$, $%
u_{4}^{\prime }=-\partial _{u_{3}}H(u_{3},u_{4})$. \add{Our method can be  extended to periodic solutions $V(x)$, where 
$(u_3,u_4) = (V,V')$ satisfies a general polynomial Hamiltonian system with 
Hamiltonian $H(u_3,u_4)$. For analytic Hamiltonians that are not polynomial, 
recently developed techniques based on the discrete Fourier transform could 
be adapted to our setting to achieve similar results (see, e.g., 
\cite{MR3709329,MR4930548}). Thus, although we have implemented our procedure 
specifically for the prototype case of the cosine potential, it is applicable 
to a broad class of other periodic potentials.}


Now, define the set 
\begin{equation}
\gamma \,\overset{\mbox{\tiny\textnormal{\raisebox{0ex}[0ex][0ex]{def}}}}{=}%
\,\{(0,0,\cos (2x),-2\sin (2x)):x\in \lbrack 0,\pi )\}\subset \mathbb{R}^{4},
\label{eq:periodic_orbit}
\end{equation}%
which represents a periodic orbit of the system \eqref{eq:4D_system}.
Looking for a gap soliton reduces to finding a solution $U:\mathbb{R}%
\rightarrow \mathbb{R}^{4}$ of \eqref{eq:4D_system} defined on all of $%
\mathbb{R}$, such that 
\begin{equation*}
\{U(x):x\in \mathbb{R}\}\subset W^{u}(\gamma )\cap W^{s}(\gamma ),
\end{equation*}%
where $W^{u}(\gamma )$ and $W^{s}(\gamma )$ denote the unstable and stable
manifolds of $\gamma $, respectively. In other words, identifying a gap
soliton amounts to finding a homoclinic orbit associated with the periodic orbit $\gamma $ in the four-dimensional system  \eqref{eq:4D_system} .

The study of connecting orbits in dynamical systems is a vast and active area of research that intersects with diverse mathematical disciplines such as algebraic topology, Morse homology, celestial mechanics, chaos theory, and the calculus of variations. Over the years, a variety of mathematical techniques have been developed to address the theoretical challenges associated with these orbits. For instance, perturbative methods within variational frameworks \cite{Berti, Buffoni, Ambrosetti} and non-perturbative techniques \cite{Tanaka, Hofer, MR1070929} have been employed to prove the existence of homoclinic orbits in conservative and Hamiltonian systems. More recently, the advent of computer-assisted proofs in nonlinear analysis has greatly enriched the study of connecting orbits, employing techniques that combine the strengths of topology, functional analysis, and scientific computing. Prominent methodologies include topological covering relations 
\cite{MR2173545, MR1961956}, the Parameterization Method in a functional-analytic setting \cite{MR2821596, MR3353132, VanDenBerg2018,
MR4658475}, homotopy methods \cite{MR2220064}, and interpolation-based
techniques \cite{MR2408123, MR1661847}. These methods have been instrumental
in proving the existence of connecting orbits in the context of ordinary
differential equations (ODEs) and continue to play an important role in advancing the field. In this paper, we contribute to this line of work.

Let $W_{loc}^{s}(\gamma) $ denote the local stable manifold of $\gamma $. By restricting
to the class of even solitons, we impose that $u_1^{\prime }(0)=0$, and
hence, we only need to solve for $x \ge 0$. The asymptotic condition at $x
\to \infty$ for a soliton solution \eqref{eq:homoclinic_conditions} is
reinterpreted as the condition that $U(\mathsf{L}) \in W_{loc}^{s}(\gamma)$,
for some $\mathsf{L} \in \mathbb{R}$. From this reformulation, we deduce
that the first component $u_{1}:[0,\mathsf{L}] \to \mathbb{R}$ of a solution 
$U:[0,\mathsf{L}] \to \mathbb{R}^{4}$ of the boundary-value problem (BVP) 
\begin{equation}  \label{eq:BVP_soliton}
\dot U(x) = g(U(x)), \quad x \in [0,\mathsf{L}], \quad U(0) =
(u_{0},0,1,0)^T, \quad U(\mathsf{L}) \in W_{loc}^{s}(\gamma), \quad \text{%
for some } u_{0}, \mathsf{L} \in \mathbb{R},
\end{equation}
can be extended to define an even soliton $u(x) = u_1(|x|)$ of the GP
equation.  To obtain an explicit boundary condition on the stable manifold, we use the Parameterization Method for periodic orbits \cite{param1,param2,param3,Castelli2018}.

The rest of the paper is dedicated to solving the boundary-value problem \eqref{eq:BVP_soliton}.
We reformulate both the construction of a local stable manifold of the periodic orbit \(\gamma\) and the boundary-value problem as zero-finding problems in infinite-dimensional Banach spaces. In both cases, we apply tools from computer-assisted proofs in nonlinear analysis \cite{MR1420838,MR2807595,MR3444942,MR3971222,MR3990999} to prove the existence of solutions near approximate zeros of the corresponding maps. Our approach is based on the Newton–Kantorovich-type theorem stated below.

\begin{theorem}[\textbf{Newton-Kantorovich Theorem}]
\label{Theorem:Newton} Let $X$ and $Z$ be Banach spaces, and let $F: X \to Z 
$ be a $C^{1}$ map, let $\bar{x}$ be an element of $X$, $A$ a linear
injective map from $Z$ to $X$. Let $r^{*}$ be a positive real number and
denote by $B(\bar{x}, r^{*})$ the closed ball centered at $\bar{x}$ of
radius $r^{*}$. Assume there exist nonnegative constant Y, $Z_{1}$ and $%
Z_{2} $ such that 
\begin{align}  \label{eq:Z2_bound_general}
\lVert A F(\bar{x}) \rVert_{X} & \le Y \\
\lVert I - ADF(\bar{x}) \rVert_{B(X)} & \le Z_{1} \\
\lVert A(DF(x) - DF(\bar{x})) \rVert_{B(X)} & \le Z_{2} \lVert x- \bar{x}
\rVert_{X} , \quad \forall x\in B(\bar{x}, r^{*}).
\end{align}
If $Z_{1} < 1$ and $Z_{2} < \frac{(1 - Z_{1})^{2}}{2Y}$, then for any $r$
satisfying $\frac{1 - Z_{1} - \sqrt{ (1- Z_{1})^{2} - 2YZ_{2}}}{Z_{2}} \leq
r < min\left( \frac{1-Z_{1}}{Z_{2}}, r^{*} \right)$, then there exists a
unique zero $x^{*}$ of $F$ in the ball $B(\bar{x},r)$.
\end{theorem}
\add{A proof of this theorem can be found in \cite{vanDenBerg2021}.} In our setting, \(\bar{x}\) denotes a numerical approximate solution. The quantities \(Y\), \(Z_1\), and \(Z_2\) correspond to bounds on the norms of elements in certain infinite-dimensional Banach spaces. For any given set of parameters, we derive these bounds in a form that can be rigorously evaluated by a computer. This derivation constitutes the main technical part in our paper. 

\emph{While other approaches to studying the existence of gap solitons exist, our main contribution is to provide explicit conditions that guarantee the existence of a true solution near a numerical approximation. Moreover, the examples we present constitute the first computer-assisted proofs of soliton existence in the Gross--Pitaevskii equation in non-perturbative parameter regimes.}

Our paper is structured as follows. In Section~\ref{sec:manifold}, we
describe a computational method to obtain a parameterization of the local
stable manifold $W_{loc}^{s}(\gamma) $ with rigorous error bounds. In
Section~\ref{sec:BVP}, we introduce a constructive approach and prove the
existence of solutions to the BVP \eqref{eq:BVP_soliton} using Chebyshev
series expansions. Finally, in Section~\ref{sec:conclusions}, we provide examples of our method, including a computer-assisted proof of Theorem~\ref{theorem:main}.


\section{Computation of a Local Stable Manifold of the Periodic Orbit} \label{sec:manifold}
In Section~\ref{sec:intro}, we reformulated the problem of proving the existence of soliton solutions as finding a solution to equation \eqref{eq:4D_system} that intersects the local stable manifold \( W_{loc}^s(\gamma) \) associated with the periodic orbit \( \gamma \). To explicitly characterize points on this stable manifold, we will employ the Parameterization Method \cite{param1,param2,param3}, following the framework developed in \cite{Castelli2015,Castelli2018}. The periodic orbit $\gamma$ possesses two trivial Floquet exponents: one arising from the conserved quantity $H = \frac{1}{2}(u_{4}^{2} + 4u_{3}^{2})$, discussed in Section~\ref{sec:intro}, and another due to the shift invariance of the periodic orbit. Consequently, $\gamma$ admits at most two nontrivial Floquet exponents. For the remainder of this work, we assume that $\dim W^u(\gamma) = \dim W^s(\gamma) = 1$ and denote the stable Floquet exponent of $\gamma$ by $\lambda < 0$. \add{To compute the stable bundle $v$ associated with the periodic solution $\gamma$, 
we linearize the vector field $g$ in \eqref{eq:4D_system} around $\gamma$, 
yielding the linearized equation
\begin{equation} \label{eq:bundle_ODE_0}
\dot u(\theta) = Dg(\gamma(\theta))\, u(\theta),
\end{equation}
where the matrix $Dg(\gamma(\theta))$ is $\pi$-periodic.  
By Floquet theory, solutions of this linear system can be expressed as
\begin{equation} \label{eq:bundle_ODE_1}
u(\theta) = e^{\lambda \theta} v(\theta),
\end{equation}
with $\lambda \in \mathbb{R}$ the Floquet exponent and $v(\theta)$ a $2\pi$-periodic function.  
The period of $v(\theta)$ is doubled relative to that of the original solution to account 
for the possible non-orientability of the stable bundle. Plugging the expression \eqref{eq:bundle_ODE_1} in \eqref{eq:bundle_ODE_0} yields the functional equation
\begin{equation} \label{eq:bundle_ODE}
    \dot{v} + \lambda v = Dg(\gamma(\theta))v,
\end{equation}
where the solution $v:S^1 \to \mathbb{R}^{4}$ denotes the associated stable bundle.
}
We refer to the image of $v$ as the {\em stable tangent bundle} attached to the periodic orbit \(\gamma\).

Once  the solution $v$ of \eqref{eq:bundle_ODE} is obtained, the Parameterization Method allows us to compute a parameterization \(W: S^1 \times [-1,1] \to \mathbb{R}^{4}\) of \(W_{loc}^s(\gamma)\) by solving the following partial differential equation
\begin{equation} \label{eq:manifold_PDE}
    \frac{\partial }{\partial \theta} W(\theta,\sigma) + \lambda\sigma \frac{\partial }{\partial \sigma} W(\theta,\sigma) = g(W(\theta,\sigma)),
\end{equation}
subject to the following first order constraints
\begin{equation} \label{eq:first_order_constraints}
    W(\theta,0) = \gamma(\theta) \quad \text{and} \quad \frac{\partial }{\partial \sigma} W(\theta,0) = v(\theta).
\end{equation}
Observe that, while the period of \( \gamma \) is \( \pi \), we define the domain of the local stable manifold in \( \theta \) as \( S^1 \bydef \mathbb{R}/(2\pi\mathbb{Z}) \) to accommodate the potential non-orientability of the manifold.

As previously established (e.g., see Theorem 2.6 in \cite{Castelli2015}), if \( W \) satisfies \eqref{eq:manifold_PDE} and \eqref{eq:first_order_constraints}, and \( \varphi \) denotes the flow generated by \( \dot{U} = g(U) \) as given in \eqref{eq:4D_system}, then the following conjugacy relation is satisfied:
\begin{equation} \label{eq:conjugacy_manifold}
    \varphi\left( W(\theta,\sigma),t\right) = W(\theta+t,e^{\lambda t}\sigma)
\end{equation}
for all $\sigma \in [-1,1]$, $\theta \in S^1$ and $t \ge 0$.

To construct a parameterization \(W: S^1 \times [-1,1] \to \mathbb{R}^{4}\) satisfying \eqref{eq:manifold_PDE} and \eqref{eq:first_order_constraints}, we adopt a sequence space framework.
Let us formalize this. In order to represent a sequence of Fourier coefficients of a periodic function, we introduce the following {\em sequence space}
\begin{equation} \label{eq:S_F}
    S_{\sF} \bydef  \left\{ s = (s_m)_{m \in \Z} :  s_m \in \mathbb{C},  \lVert s \rVert_{\sF} \bydef   \sum_{m\in \mathbb{Z}}  |s_m| \nu^{|m|} < \infty \right\},
\end{equation}
for a given exponential weight $\nu \ge 1$.  

Given two sequences of complex numbers $u_1 = \{(u_1)_m\}_{m\in \Z},u_2 = \{(u_2)_m\}_{m\in \Z} \in S_{\sF}$, denote their {\em discrete convolution} given component-wise by
\begin{equation} \label{eq:discrete_convolution}
    (u_1 *_{\sF}u_2)_m \bydef \sum_{m_1+m_2=m \atop m_1,m_2 \in \Z} (u_1)_{m_1} (u_2)_{m_2}.
\end{equation}
This gives rise to a product $*_{\sF}:S_{\sF} \times S_{\sF} \to S_{\sF}$. To represent the Taylor-Fourier coefficients of the parameterization $W$, we consider the sequence space $S_{\sTF}$ defined by
\begin{equation} \label{eq:S_TF}
    S_{\sTF} \bydef  \left\{w = \{w_{n}\}_{n\geq0}: w_n \in S_{\sF}  , \lVert w \rVert_{\sTF}  \bydef \sum_{n\geq0} \lVert w_n \rVert_{\sF} <\infty \right\}.
\end{equation}
Given a Taylor-Fourier sequence $p \in S_{\sTF}$, and given $n \ge 0$, we denote by $p_n \in S_{\sF}$ the Fourier sequence
$p_n \bydef ( p_{n,m})_{m \in \Z}$. Now, given $p,q \in S_{\sTF}$  we define their Taylor-Fourier Cauchy product  $*_{\sTF} :S_{\sTF} \times S_{\sTF} \to S_{\sTF}$ as follows
\begin{equation} \label{eq:Taylor_Fourier_product}
    (p *_{\sTF} q)_n \bydef \sum_{l =0}^n p_l*_{\sF} q_{n-l}.
\end{equation}
Having formalize some sequence spaces in which we will work, we now express \(W\) as a Taylor series in \(\sigma\), with each Taylor coefficient further expanded as a Fourier series in \(\theta\), that is
\begin{equation} \label{eq:TF_expansion}
    W(\theta, \sigma) = \sum_{n = 0}^{\infty} W_n(\theta) \sigma^{n} =
    \sum_{n = 0}^{\infty}
    \sum_{m \in \mathbb{Z}}
    w_{n,m } e^{im\theta} \sigma^{n}, \quad w_{n,m } \in \mathbb{C}^4,
\end{equation}
where the real periodic function $W_n(\theta)$ is expressed as a Fourier series
\begin{equation} \label{eq:Fourier_manifold_expansion}
    W_n(\theta) \bydef \sum_{m \in \mathbb{Z}}    w_{n,m } e^{im\theta}.
\end{equation}
We introduce a notation that will be used throughout this paper: superscripts denote the components of vector sequence variables. For example, a vector \( v \in \mathbb{C}^4 \) is written as \( v = (v^{(1)}, v^{(2)}, v^{(3)}, v^{(4)}) \).
From the constraints \eqref{eq:first_order_constraints}, it follows that $W_0(\theta)=\gamma(\theta)$ and $W_1(\theta)=v(\theta)$. The Fourier series of each component $\gamma^{(j)}$ ($j=1,\dots,4$) of the periodic orbit $\gamma(\theta)$ defined in \eqref{eq:periodic_orbit} is given by
\[
    \gamma^{(j)}(\theta) = \sum_{m \in \mathbb{Z}} \gamma^{(j)}_{m} e^{im\theta},
\]
where \(\gamma^{(1)}_{m} = \gamma^{(2)}_{m} = 0\) for all \(m \in \mathbb{Z}\), while the Fourier coefficients $\gamma^{(3)}$ and $\gamma^{(4)}$ are given by
\begin{equation} \label{eq:gamma34_fourier_coeffs}
    \gamma^{(3)}_{m} =
    \begin{cases}
        \frac{1}{2}, & m = \pm 2,    \\
        0,           & m \neq \pm 2,
    \end{cases}
    \quad \text{and} \quad
    \gamma^{(4)}_{m} =
    \begin{cases}
        -i, & m = -2,       \\
        i,  & m = 2,        \\
        0,  & m \neq \pm 2.
    \end{cases}
\end{equation}
From the Fourier-Taylor expansion \eqref{eq:TF_expansion}, we then get that $w_{0,m}^{(j)} = \gamma_{m}^{(j)}$ for all $m\in \mathbb{Z}$ and $j = 1,2,3,4$. Having derived an explicit expression for the Fourier coefficients of the periodic orbit, we now turn to computing the Fourier coefficients of the stable tangent bundle \( W_1(\theta) = v(\theta) \), which we express as
\begin{equation} \label{eq:v(theta)_Fourier}
    W_1(\theta)=v(\theta) = \sum_{m \in \Z} v_{m} e^{im\theta}, \quad v_{m} = (v_{m}^{(1)},\dots,v_{m}^{(4)}) \in \mathbb{C}^4.
\end{equation}
To compute the coefficients of \( v = W_1 \), we substitute the Fourier expansion \eqref{eq:v(theta)_Fourier} into the linear non-autonomous differential equation \eqref{eq:bundle_ODE}, match terms with like powers, and derive the following infinite system of algebraic equations indexed over $m\in \mathbb{Z}$:
\begin{equation} \label{eq:bundle_coeff}
    \begin{aligned}
        (im+ \lambda)v^{(1)}_{m} & = v^{(2)}_{m}
        \\
        (im+ \lambda)v^{(2)}_{m} & = -a v^{(1)}_{m} + b (\gamma^{(3)}*_{\sF}v^{(1)})_{m}
        \\
        (im+ \lambda)v^{(3)}_{m} & = v_{m}^{(4)}
        \\
        (im+ \lambda)v^{(4)}_{m} & = -4v_{m}^{(3)},
    \end{aligned}
\end{equation}
where $\gamma^{(3)}*_{\sF}v^{(1)}$ denotes the discrete convolution of $\gamma^{(3)}$ and $v^{(1)}$ as introduced in \eqref{eq:discrete_convolution}.
From the sequence equation \eqref{eq:bundle_coeff} follows that
\begin{equation}
    \label{eq:bundle_v3_v4}
    \begin{pmatrix}
        im +\lambda & -1          \\
        4           & im +\lambda
    \end{pmatrix}
    \begin{pmatrix}
        v^{(3)}_{m} \\
        v^{(4)}_{m}
    \end{pmatrix}
    =
    \begin{pmatrix}
        0 \\ 0
    \end{pmatrix}, \quad m\in \mathbb{Z}.
\end{equation}
Since \(\lambda < 0\), the linear system admits a unique solution, which is the zero vector for all \(m \in \mathbb{Z}\). Consequently, \(v_m^{(3)} = v_m^{(4)} = 0\) for all \(m \in \mathbb{Z}\). Thus, it remains to rigorously compute the coefficients of \( v^{(1)} \) and \( v^{(2)} \), which we carry out in Section~\ref{sec:bundle}. \add{After obtaining a proof in the sequence space \(\sF\), we show that the Fourier coefficients \(v_m\) of the solution have the property that the negative modes are the complex conjugates of the corresponding positive modes, so the associated Fourier series represents a real-valued periodic function.} Assuming that this is done, from the Fourier-Taylor expansion \eqref{eq:TF_expansion}, we then get that $w_{1,m}^{(j)} = v_{m}^{(j)}$ for all $m\in \mathbb{Z}$ and $j = 1,2,3,4$.

Having established a strategy for obtaining the Fourier coefficients of \( W_n(\theta) \) for \( n=0,1 \) in \eqref{eq:Fourier_manifold_expansion}, we now proceed to compute the higher-order Taylor coefficients for \( n \ge 2 \). Substituting the Fourier-Taylor expansion \eqref{eq:TF_expansion} into the PDE \eqref{eq:manifold_PDE} and equating terms with like powers results in the following relations, indexed over $m \in \Z$ and $n \ge 2$:
\begin{equation}\label{eq:manifold_coeff}
    \begin{aligned}
        (im + n\lambda ) w_{n,m}^{(1)} & =
        w_{n,m}^{(2)}
        \\
        (im + n\lambda ) w_{n,m}^{(2)} & =
        - a w_{n,m}^{(1)}
        + b \left( w^{(3)}*_{\sTF}w^{(1)} \right)_{n,m}
        +c\left(w^{(1)}*_{\sTF}w^{(1)}*_{\sTF}w^{(1)}\right)_{n,m}
        \\
        (im + n\lambda )w_{n,m}^{(3)}  & =
        w_{n,m}^{(4)}
        \\
        (im + n\lambda ) w_{n,m}^{(4)} & =
        -4w_{n,m}^{(3)},
    \end{aligned}
\end{equation}
where the Taylor-Fourier Cauchy product $*_{\sTF}$ is given in \eqref{eq:Taylor_Fourier_product}.
By applying a similar argument to the one used to establish that \(v^{(3)} = v^{(4)} = 0\), we conclude that \(w_{n,m}^{(3)} = w_{n,m}^{(4)} = 0\) for all \(n \geq 2\) and \(m \in \mathbb{Z}\). Thus, the remaining task in parameterizing \(W_{loc}^s(\gamma)\) reduces to rigorously enclosing the coefficients \(w_{n,m}^{(1)}\) and \(w_{n,m}^{(2)}\) for all \(n \geq 2\) and \(m \in \mathbb{Z}\).
In the remainder of this section, we develop a general computer-assisted framework to prove the existence of solutions to the first two equations of the sequence equations \eqref{eq:bundle_coeff} and \eqref{eq:manifold_coeff} close to approximate solutions. For each problem, we construct a  {\em validation map} \( F \), whose zeros correspond to the desired solutions. We then apply the Newton--Kantorovich Theorem~\ref{Theorem:Newton} to prove the existence of these solutions.

\subsection{Solving for The First Order Coefficients: The Stable Bundle} \label{sec:bundle}
In this section, we present a method to construct a solution to the first two equations of the stable bundle sequence problem \eqref{eq:bundle_coeff}. In particular, the method provides the necessary conditions to verify that a true solution exists close to an approximate solution. Our approach is based on Theorem~\ref{Theorem:Newton}, which requires the definition of relevant Banach spaces, operators, and explicitly computable bounds \( Y \), \( Z_1 \), and \( Z_2 \).

The unknown Floquet exponent \(\lambda\) is treated as part of the solution, which requires expanding the problem to include a space for \(\lambda\). We write the solution as \(x = (\lambda, v)\), where \(\lambda \in \mathbb{C}\) and \(v = \left( v^{(1)}, v^{(2)} \right)\) lies in the sequence space \(S_{\sF}^2\). This leads to the definition of the Banach space \(X_{\sF}\) as the product of the parameter space \(\mathbb{C}\) and the sequence space \(S_{\sF}^2\):
\[
    X_{\sF} \bydef \mathbb{C} \times S^{2}_{\sF} , \quad \| v \|_{S^{2}_{\sF}} \bydef \max \left\{ \|v^{(1)}\|_{\sF}, \|v^{(2)}\|_{\sF} \right\}, \quad \| x \|_{X_{\sF}} \bydef \max \left\{ |\lambda|, \| v \|_{S^{2}_{\sF}} \right\}.
\]
Although \(\lambda\) is expected to be real, we perform computations in the complex space \(X_{\sF}\) because the sequence space components involve complex coefficients, and our numerical methods use complex floating-point vectors. Once the existence of a solution in the complex space is established, we demonstrate that \(\lambda\) is real and satisfies \(\lambda < 0\).
Observe that if \((\lambda, v)\) solves \eqref{eq:bundle_coeff}, then any scalar multiple of \(v\) is also a solution. To guarantee uniqueness, which is essential for the Newton-Kantorovich approach used in Theorem~\ref{Theorem:Newton}, we introduce a phase condition as an additional equation:
\begin{equation}
    \label{eq:phase_conditon}
    \eta(v) - l = 0,
    \quad \text{where} \quad
    \eta(v) \bydef \sum_{|m|\leq M} v_{m}^{(1)},
    \quad \text{and} \quad l \in \mathbb{R}.
\end{equation}
\add{The phase condition sets the first component of \( v(0) \) equal to \( l \).
} A convenient formulation of the phase condition involves defining the sequence \(\mathbf{1}_M\), indexed over \(\mathbb{Z}\), such that \((\mathbf{1}_M)_m = 1\) for \(|m| \leq M\) and \((\mathbf{1}_M)_m = 0\) for \(|m| > M\). This allows the phase condition to be expressed as the dot product \(\eta(v) = \mathbf{1}_M \cdot v^{(1)} \). With this normalization condition on \(v\), we define the {\em validation map} \(F\) by
\begin{equation} \label{eq:bundle_F}
    F(x) \bydef \begin{pmatrix} \eta(v)-l \\ L_{\lambda}v - f(v) \end{pmatrix},
\end{equation}
where
\begin{equation} \label{eq:L_and_f_bundle}
    L_{\lambda} v \bydef \begin{pmatrix} \left( (im+ \lambda) v^{(1)}_{m} \right)_{m \in \Z} \\
        \left( (im+ \lambda) v^{(2)}_{m} \right)_{m \in \Z}
    \end{pmatrix}
    \quad \text{and} \quad
    f(v) \bydef
    \begin{pmatrix}
        v^{(2)} \\
        -a v^{(1)} +  b\left(\gamma^{(3)}*_{\sF}v^{(1)}\right)
    \end{pmatrix}.
\end{equation}
Note that the first two equations of the right-hand side of equation \eqref{eq:bundle_coeff} are represented by the sequence operator $f: S_{\sF}^2 \to S_{\sF}^2$, while the left-hand side is expressed as a linear operator $L_{\lambda}: S_{\sF}^2 \to S_{\sF}^2$. 
Our computer-assisted approach relies on using a finite dimensional approximate solution to $F(x)=0$ that we have obtained through numerical methods. Therefore, interactions between truncated sequences and infinite sequences are essential to our method. To handle sequences effectively, we introduce a {\em truncation operator} as follows. For a sequence $p$ in the sequence space $S_{\sF}$ and a set of indices $R \subset \Z$, we define the truncation operator as
\[
    \left( \proj{R}{F}  p \right)_{m} =
    \begin{cases}
        p_m, & m\in R     \\
        0,   & m\notin R.
    \end{cases}
\]

We adopt the following conventions regarding the action of truncation operator \add{for a set of indices $R$} on elements $(\lambda, v)$ of the product space $X_{\sF}$
\[
    \proj{R}{F} (\lambda,v) \bydef (0,  \proj{R}{F} v),
    \quad
    \proj{R}{F}  v \bydef
    \begin{pmatrix}
        \proj{R}{F}  v^{(1)} \\
        \proj{R}{F} v^{(2)}
    \end{pmatrix}
    \quad \text{and} \quad
    \proj{}{\mathbb{C}} (\lambda,v) \bydef (\lambda,0).
\]
Moreover, to compactly denote sets of indices we define for $P,Q \in \mathbb{N}$,
\[
    [P,Q] \bydef \left\{ m \in \mathbb{Z} : P \leq |m| \leq Q \right\}, \quad
    (P,Q] \bydef \left\{ m \in \mathbb{Z} : P < |m| \leq Q \right\}, \\
\]
\[
    (Q,\infty ) \bydef \left\{ m \in \mathbb{Z} : |m| > Q \right\}.
\]
\add{We define the {\em support} of a sequence \( p \) as the set of indices \( R \) such that \( p_m \neq 0 \) for all \( m \in R \).} Sequences with finite support can be represented as finite-dimensional vectors, which are suitable for computational manipulation. We refer to such sequences as {\em computable sequences}.
In contrast, sequences whose support lies within the interval \( (M, \infty) \) for some \( M \in \mathbb{N} \) are infinite-dimensional and cannot naturally be represented as finite-dimensional vectors. These sequences are referred to as {\em infinite tails}. With this terminology established, we proceed to describe the subsequent steps of our method, introducing the element \(\bar{x}\) and the operator \( A \) required to apply  Theorem~\ref{Theorem:Newton}. Let \(\bar{x} = \bar{x}_{\sF} \bydef (\bar{\lambda}, \bar{v})\) denote an element of \( X_{{\sF}} \) with finite support, which is
\[
    \bar{x}_{\sF} =   \left( \bar{\lambda}, \proj{[0,M]}{F} \bar{v}  \right).
\]
The operator \( A \), central to Theorem~\ref{Theorem:Newton}, serves as a link between the finite-dimensional and infinite-dimensional components of our approach. It is defined as
\begin{equation} \label{eq:bundle_A}
    A \bydef A_f + A_\infty,
    \qquad  \text{with }
    A_\infty \bydef L_{\bar{\lambda}}^{-1} \proj{(M, \infty)}{F}
    \bydef
    \begin{pmatrix} 0 & 0 \\ 0 & L_{\bar{\lambda}}^{-1}
                \proj{(M,\infty)}{F}
    \end{pmatrix}
    ,
\end{equation}
where \( A_f \) outputs sequences with finite support.
Specifically,
\[
    A_f \bydef \left( \proj{}{\mathbb{C}} + \proj{[0, M]}{F} \right)
    \bar{A}(\bar{x}_{\sF})\left( \proj{}{\mathbb{C}} + \proj{[0, M]}{F} \right)
    =
    \begin{pmatrix} 1 & 0 \\ 0 &
                \proj{[0, M]}{F}
                \bar{A}(\bar{x}_{\sF}) \proj{[0, M]}{F}
    \end{pmatrix} ,
\]
where \( \bar{A}(\bar{x}_{\sF}) \) can be represented as a \( 2(2M+1) \times 2(2M + 1) \) matrix. In practice, we use the numerical inverse matrix of
\(
\left( \proj{}{\mathbb{C}} + \proj{[0, M]}{F} \right)
DF(\bar{x}_{\sF})\left( \proj{}{\mathbb{C}} + \proj{[0, M]}{F} \right),
\)
as the matrix \( \bar{A} \). Moreover, we represent the inverse of the sequence operator $L_\lambda$ acting on $ \left( \lambda, v  \right) \in X_{\sF}$ by
\[
    L_{\lambda}^{-1} (\lambda, v ) \bydef \left(  0,  L_{\lambda}^{-1}  v  \right), \quad \text{where} \quad
    L_{\lambda}^{-1} v \bydef \begin{pmatrix} \left( (im+ \lambda)^{-1} v^{(1)}_{m} \right)_{m \in \Z} \\
        \left( (im+ \lambda)^{-1} v^{(2)}_{m} \right)_{m \in \Z}
    \end{pmatrix}.
\]

Since we plan to use computers for our calculations, we need to determine what mathematical objects can be implemented computationally. A mathematical object is called {\em computable} if it can be explicitly implemented in a computational program, meaning it can be constructed or evaluated through a finite sequence of well-defined computational steps. Computable expressions can be rigorously evaluated using {\em interval arithmetic}, where numbers are represented as intervals instead of single floating-point values. Arithmetic operations such as addition, multiplication, and division are performed directly on these intervals. This approach propagates numerical uncertainty throughout the computation, producing intervals that rigorously enclose the true result (e.g., see \cite{MR0231516,MR2807595}).

\add{Even if our notation based on truncation operators may appear cumbersome at first, our choice is deliberate. It is designed so that the bounds and operations in the paper closely match those implemented in the computer code, making the translation from paper to code as transparent as possible and simplifying the verification of our implementation.}

We now turn to deriving computable bounds necessary for Theorem~\ref{Theorem:Newton}. Our strategy is to decompose these bounds into two components: an explicitly computable part and a component involving infinite tails, which will be estimated using computable quantities. To this end, we utilize the following sequence relations.
\begin{lemma}
    \label{lemma:convolution_product}
    Let $p$, $q$ be sequences supported in $[0,M]$ and let $r$ be a sequence supported in \((2M, \infty)\). Then,
    \begin{equation} \label{eq:convolution_finite_finite}
        \proj{[0,M]}{F} p *_{\sF} \proj{[0,M]}{F} q = \proj{[0,2M]}{F} \left( \proj{[0,M]}{F} p *_{\sF} \proj{[0,M]}{F} q \right),
    \end{equation}
    that is the product of two finite sequences remains finite, though it extends the support of the resulting sequence. Moreover, the convolution of a sequence supported in \([0, M]\) and a sequence supported in \((2M, \infty)\), results in a sequence supported in \((M, \infty)\), that is
    \begin{equation} \label{eq:convolution_finite_infinite}
        \proj{[0, M]}{F} p *_{\sF} \proj{(2M, \infty)}{F} r = \proj{(M, \infty)}{F}  \left( \proj{[0, M]}{F} p *_{\sF} \proj{(2M, \infty)}{F} r  \right)
    \end{equation}
\end{lemma}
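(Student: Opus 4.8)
The plan is to argue directly from the definition of the discrete convolution \eqref{eq:discrete_convolution}, tracking for each index $m$ which pairs $(m_1,m_2)$ with $m_1+m_2=m$ can contribute a nonzero summand.

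For \eqref{eq:convolution_finite_finite}, fix $m \in \Z$. A summand $(\proj{[0,M]}{F} p)_{m_1} (\proj{[0,M]}{F} q)_{m_2}$ appearing in $(\proj{[0,M]}{F} p *_{\sF} \proj{[0,M]}{F} q)_m$ is nonzero only if $|m_1| \le M$ and $|m_2| \le M$. The triangle inequality then gives $|m| = |m_1+m_2| \le |m_1| + |m_2| \le 2M$, so the product vanishes whenever $|m| > 2M$. This is exactly the assertion that $\proj{[0,2M]}{F}$ acts as the identity on $\proj{[0,M]}{F} p *_{\sF} \proj{[0,M]}{F} q$, and it simultaneously shows that this product is again a finite-support (hence computable) sequence, supported in $[0,2M]$.

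For \eqref{eq:convolution_finite_infinite}, the argument is the mirror image, using the reverse triangle inequality. Fix $m \in \Z$; a nonzero summand in $(\proj{[0, M]}{F} p *_{\sF} \proj{(2M, \infty)}{F} r)_m$ forces $|m_1| \le M$ (from the support of $\proj{[0,M]}{F} p$) and $|m_2| > 2M$ (from the support of $\proj{(2M,\infty)}{F} r$), whence $|m| = |m_1+m_2| \ge |m_2| - |m_1| > 2M - M = M$. Thus the convolution vanishes for every $m$ with $|m| \le M$, which is precisely the statement that $\proj{(M,\infty)}{F}$ acts as the identity on it.

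The only point requiring a little care is notational: the index sets $[0,M]$, $(2M,\infty)$, and so on, are defined in terms of $|m|$ rather than the signed index $m$, so one must invoke $|m_1 + m_2| \le |m_1| + |m_2|$ and $|m_1 + m_2| \ge \bigl| |m_1| - |m_2| \bigr|$ rather than any statement about signed sums. No convergence subtleties arise, since in each identity at least one factor has finite support and every convolution entry is therefore a finite sum. Accordingly I anticipate no real obstacle; the lemma simply records, once and for all, the two support-propagation facts that will be used repeatedly when splitting the bounds $Y$, $Z_1$ and $Z_2$ into an explicitly computable finite part and an infinite-tail remainder.
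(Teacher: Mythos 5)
Your proof is correct and follows essentially the same approach as the paper: you track which index pairs $(m_1,m_2)$ with $m_1+m_2=m$ can produce a nonzero summand in the discrete convolution, given the support constraints. The only cosmetic difference is that you package the index bookkeeping via the triangle and reverse triangle inequalities, whereas the paper handles \eqref{eq:convolution_finite_finite} by appeal to degrees of trigonometric polynomials and handles \eqref{eq:convolution_finite_infinite} by an explicit case split on the sign of $m$; both routes are equivalent and your presentation is, if anything, a bit tidier.
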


\begin{proof}
    The proof of \eqref{eq:convolution_finite_finite} is straightforward, as products of $M^{th}$ order trigonometric polynomials yield a $2M^{th}$ order trigonometric polynomial. For the proof of \eqref{eq:convolution_finite_infinite}, suppose that $0 < k \leq M$. By definition,
    \[
        \left( p *_{\sF} r \right)_{k} = \sum_{m \in \Z} p_{k - m} r_{m}.
    \]
    For all $m \in [0, 2M]$, $r_{m} = 0$. Now, for $m > 2M$, we have $k - m \leq -M$. If $m < -2M$, then $k-m>3M$. Therefore for $m \in (2M, \infty)$ we have  $p_{k - m} = 0$. The argument is analogous when $-M < k \leq 0$. Therefore we have that $\left( p *_{\sF} r \right)_{k} = 0$ for all $k \in [0, M]$.
\end{proof}

We are now prepared to establish an explicit, computable bound \( Y \) that satisfies \eqref{eq:Z2_bound_general}.

\begin{lemma} \label{lemma:bundle_Y}
    A computable upper bound for $\lVert A F (\bar{x}_{\sF}) \rVert_{X_{\sF}}$ is given by
    \begin{equation} \label{eq:bundle_Y}
        Y(\bar{x}_{\sF}) \bydef \lVert A_{f} F(\bar{x}_{\sF})\rVert_{X_{\sF}}
        +  \lVert  \proj{(M,2M]}{F}   L_{\bar{\lambda}}^{-1}  F(\bar{x}_{\sF})\rVert_{X_{\sF}}.
    \end{equation}
\end{lemma}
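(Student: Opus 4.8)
The plan is to show that the infinite-dimensional object $AF(\bar{x}_{\sF})$ has finitely many nonzero components, so that $Y(\bar{x}_{\sF})$ as defined in \eqref{eq:bundle_Y} is genuinely computable, and that it is an upper bound for $\lVert AF(\bar{x}_{\sF})\rVert_{X_{\sF}}$. First I would recall that $A = A_f + A_\infty$ with $A_\infty = L_{\bar{\lambda}}^{-1}\proj{(M,\infty)}{F}$, so that
\[
    AF(\bar{x}_{\sF}) = A_f F(\bar{x}_{\sF}) + L_{\bar{\lambda}}^{-1}\proj{(M,\infty)}{F} F(\bar{x}_{\sF}).
\]
The first summand is, by construction, a vector with finite support (it is the image of the finite matrix $\bar{A}(\bar{x}_{\sF})$ composed with the finite-rank projections), hence computable, and $\lVert A_f F(\bar{x}_{\sF})\rVert_{X_{\sF}}$ is evaluated directly in interval arithmetic. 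The crux is therefore to understand the support of $\proj{(M,\infty)}{F} F(\bar{x}_{\sF})$.

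The key step is to analyze the support of $F(\bar{x}_{\sF})$ componentwise. The phase-condition component $\eta(\bar v)-l$ is a scalar and is annihilated by $\proj{(M,\infty)}{F}$. For the sequence component $L_{\bar\lambda}\bar v - f(\bar v)$: since $\bar v$ is supported in $[0,M]$, the term $L_{\bar\lambda}\bar v$ and the terms $\bar v^{(2)}$ and $-a\bar v^{(1)}$ in $f(\bar v)$ are all supported in $[0,M]$; the only term that can extend the support is the convolution $b\,(\gamma^{(3)} *_{\sF} \bar v^{(1)})$. Now $\gamma^{(3)}$ is supported in $\{\pm 2\}\subset[0,M]$ (for $M\ge 2$) and $\bar v^{(1)}$ is supported in $[0,M]$, so by \eqref{eq:convolution_finite_finite} the product is supported in $[0, M+2]\subset[0,2M]$ (again for $M\ge 2$). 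Consequently $F(\bar{x}_{\sF})$ is supported in $[0,2M]$, and applying $\proj{(M,\infty)}{F}$ leaves a sequence supported in $(M,2M]$. Hence
\[
    L_{\bar{\lambda}}^{-1}\proj{(M,\infty)}{F} F(\bar{x}_{\sF}) = L_{\bar{\lambda}}^{-1}\proj{(M,2M]}{F} F(\bar{x}_{\sF}) = \proj{(M,2M]}{F}\, L_{\bar{\lambda}}^{-1} F(\bar{x}_{\sF}),
\]
where the last equality uses that $L_{\bar\lambda}^{-1}$ acts diagonally in Fourier space and therefore commutes with $\proj{(M,2M]}{F}$. This term is a finite sequence whose norm is computable.

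Finally I would combine the two pieces with the triangle inequality for $\lVert\cdot\rVert_{X_{\sF}}$:
\[
    \lVert AF(\bar{x}_{\sF})\rVert_{X_{\sF}} \le \lVert A_f F(\bar{x}_{\sF})\rVert_{X_{\sF}} + \lVert \proj{(M,2M]}{F}\, L_{\bar{\lambda}}^{-1} F(\bar{x}_{\sF})\rVert_{X_{\sF}} = Y(\bar{x}_{\sF}),
\]
which is exactly \eqref{eq:bundle_Y}, and note that since $\bar\lambda \ne 0$ (equivalently $im+\bar\lambda\ne 0$ for all $m$), the operator $L_{\bar\lambda}^{-1}$ is well defined on every Fourier mode, so each term on the right-hand side is a finite, rigorously enclosable quantity. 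I expect the main (though still modest) obstacle to be the bookkeeping of support sets: one must be careful that $M\ge 2$ so that $\gamma^{(3)}$ sits inside $[0,M]$ and that $M+2\le 2M$, and one must invoke Lemma~\ref{lemma:convolution_product} correctly to pin down that the tail lives in $(M,2M]$ rather than in the larger set $(M,\infty)$; everything else is the triangle inequality and the diagonal action of $L_{\bar\lambda}^{-1}$.
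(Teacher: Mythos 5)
Your proposal is correct and follows essentially the same route as the paper: split $A = A_f + A_\infty$, use Lemma~\ref{lemma:convolution_product} (equation \eqref{eq:convolution_finite_finite}) to pin down that $F(\bar{x}_{\sF})$ is supported in $[0,2M]$, observe that $L_{\bar\lambda}^{-1}$ is diagonal so it commutes with Fourier projections, and finish with the triangle inequality. Your componentwise accounting (separating the phase-condition scalar, the linear terms, and the convolution, and noting $\gamma^{(3)}$ is supported in $\{\pm 2\}$ so the product actually lives in $[0,M+2]$) is a slightly finer version of the paper's one-line appeal to the lemma, and the remark that $\bar\lambda\ne 0$ guarantees $L_{\bar\lambda}^{-1}$ is well defined is a worthwhile detail the paper leaves implicit.
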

\begin{proof}
    By construction, $A F (\bar{x}_{\sF})=A_{f} F(\bar{x}_{\sF}) + A_{\infty} F (\bar{x}_{\sF})$. Since $f(\bv)$ involves the product of two finite sequences, each supported in $[0, M]$, equation  \eqref{eq:convolution_finite_finite} from Lemma~\ref{lemma:convolution_product} implies that $F(\bar{x}_{\sF})$ involves sequences with support in $[0, 2M]$. Now, since \( A_f \) maps inputs with finite support to outputs with finite support, the quantity $\lVert A_{f} F(\bar{x}_{\sF})\rVert_{X_{\sF}}$ is computable. Furthermore, once again using Lemma~\ref{lemma:convolution_product}, we have
    \[
        A_{\infty}  F (\bar{x}_{\sF})
        =
        L_{\bar{\lambda}}^{-1}  \proj{(M,\infty)}{F}  F(\bar{x}_{\sF})
        =
        L_{\bar{\lambda}}^{-1}  \proj{(M,\infty)}{F}  \proj{[0,2M]}{F}   F(\bar{x}_{\sF})
        =
        \proj{(M,2M]}{F} L_{\bar{\lambda}}^{-1} F(\bar{x}_{\sF}).
    \]
    We used the fact that the composition of two truncation operators is supported on the intersection of their supports. Consequently a computable bound for $\lVert A F (\bar{x}_{\sF}) \rVert_{X_{\sF}}$ is given by \eqref{eq:bundle_Y}.
\end{proof}
We now proceed to derive the bound \( Z_1 = Z_1(\bar{x}_{\sF}) \) that satisfies \eqref{eq:Z2_bound_general}. \add{In what follows, we write \( B(X_{\sF}) \) for the space of continuous linear operators on \( X_{\sF} \).}

%

\begin{lemma} \label{lemma:bundle_Z1}
    A computable upper bound for $\lVert  I - A D F (\bar{x}_{\sF})  \rVert_{B(X_{\sF})}$ is given by
    \begin{equation} \label{eq:bundle_Z1}
        Z_1(\bar{x}_{\sF}) \bydef \lVert \proj{}{\mathbb{C}}  + \proj{[0,M]}{F} - A_{f} D F (\bar{x}_{\sF}) \left(\proj{}{\mathbb{C}}  + \proj{[0,2M]}{F}\right) \rVert_{B(X_{\sF})} +
        \frac{\max \left\{ 1, |a| + | b | \nu^2 \right \}}{\sqrt{(M+1)^2 + {\bar \lambda}^2}}  .
    \end{equation}
\end{lemma}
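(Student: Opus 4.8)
\medskip
\noindent\textit{Proof proposal.} The plan is to split $I - ADF(\bar{x}_{\sF})$ into a genuinely finite-rank operator, whose $B(X_{\sF})$-norm is, by construction, the first term of $Z_1(\bar{x}_{\sF})$ and is enclosed directly by interval arithmetic, plus a ``tail'' operator whose norm is controlled analytically by the smoothing factor $(im+\bar{\lambda})^{-1}$ acting on the Fourier modes $|m|>M$, producing the second term.

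First I would record the algebraic form of the differential: since $\eta$ and $f$ are \emph{linear} in $v$ and $L_\lambda v$ is bilinear in $(\lambda,v)$, one has $DF(\bar{x}_{\sF})[(h_\lambda,h_v)] = \bigl(\eta(h_v),\, h_\lambda \bar{v} + L_{\bar{\lambda}} h_v - f(h_v)\bigr)$. Two structural facts then come into play. (i) Because $\bar{v}$ is supported in $[0,M]$ and $\gamma^{(3)}$ in $\{\pm 2\}$, the Fourier modes in $[0,M]$ of $DF(\bar{x}_{\sF})h$ depend only on $h_\lambda$ and on the modes in $[0,M+2]\subseteq[0,2M]$ of $h_v$, so $A_f DF(\bar{x}_{\sF}) = A_f DF(\bar{x}_{\sF})\bigl(\proj{}{\mathbb{C}} + \proj{[0,2M]}{F}\bigr)$. (ii) $L_{\bar{\lambda}}$ is a diagonal multiplier, hence it commutes with every Fourier truncation and satisfies $L_{\bar{\lambda}}^{-1} L_{\bar{\lambda}} = I$ — the only hypothesis being $\bar{\lambda}\notin i\Z$, which holds since $\bar{\lambda}$ approximates the nonzero real Floquet exponent $\lambda<0$. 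Using (ii) and the facts that $\proj{(M,\infty)}{F}$ annihilates the finitely supported term $h_\lambda\bar{v}$ and commutes with $L_{\bar{\lambda}}$, one computes $A_\infty DF(\bar{x}_{\sF})h = \bigl(0,\ \proj{(M,\infty)}{F} h_v - L_{\bar{\lambda}}^{-1}\proj{(M,\infty)}{F} f(h_v)\bigr)$. Subtracting $ADF(\bar{x}_{\sF})h = A_f DF(\bar{x}_{\sF})h + A_\infty DF(\bar{x}_{\sF})h$ from $h$, and using $h - (0,\proj{(M,\infty)}{F}h_v) = (\proj{}{\mathbb{C}} + \proj{[0,M]}{F})h$ together with (i), yields the exact decomposition
\[
    \bigl(I - ADF(\bar{x}_{\sF})\bigr)h = \Bigl[\bigl(\proj{}{\mathbb{C}} + \proj{[0,M]}{F}\bigr) - A_f DF(\bar{x}_{\sF})\bigl(\proj{}{\mathbb{C}} + \proj{[0,2M]}{F}\bigr)\Bigr]h + \Bigl(0,\ L_{\bar{\lambda}}^{-1}\proj{(M,\infty)}{F} f(h_v)\Bigr).
\]

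It then remains to bound the two summands. The first operator — call it $B$ — factors through the finite-dimensional space $\bigl(\proj{}{\mathbb{C}} + \proj{[0,2M]}{F}\bigr)X_{\sF}$ and takes values in $\bigl(\proj{}{\mathbb{C}} + \proj{[0,M]}{F}\bigr)X_{\sF}$, so $\lVert B \rVert_{B(X_{\sF})}$ reduces to the operator norm of an explicit finite matrix between the corresponding weighted finite-dimensional spaces, which is enclosed rigorously by interval arithmetic; by construction this is precisely the first summand of $Z_1(\bar{x}_{\sF})$. For the tail term I would use two elementary estimates: for any $g\in S_{\sF}$, $\lVert L_{\bar{\lambda}}^{-1}\proj{(M,\infty)}{F} g \rVert_{\sF} \le \bigl((M+1)^2 + \bar{\lambda}^2\bigr)^{-1/2}\lVert g \rVert_{\sF}$, since $|im+\bar{\lambda}|\ge\sqrt{(M+1)^2+\bar{\lambda}^2}$ for $|m|\ge M+1$ and truncation does not increase the $\lVert\cdot\rVert_{\sF}$-norm; and $\lVert f(h_v) \rVert_{S^{2}_{\sF}}\le \max\{1,\,|a|+|b|\nu^2\}\,\lVert h_v \rVert_{S^{2}_{\sF}}$, which follows from $\lVert h_v^{(2)} \rVert_{\sF}\le\lVert h_v \rVert_{S^{2}_{\sF}}$ together with the Banach-algebra bound $\lVert \gamma^{(3)} *_{\sF} h_v^{(1)} \rVert_{\sF} \le \lVert \gamma^{(3)} \rVert_{\sF}\lVert h_v^{(1)} \rVert_{\sF} = \nu^2 \lVert h_v^{(1)} \rVert_{\sF}$ (using \eqref{eq:gamma34_fourier_coeffs}). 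Together these show the tail term is bounded by $\frac{\max\{1,\,|a|+|b|\nu^2\}}{\sqrt{(M+1)^2+\bar{\lambda}^2}}\lVert h \rVert_{X_{\sF}}$. Adding the two estimates via the triangle inequality and taking the supremum over $\lVert h \rVert_{X_{\sF}}\le 1$ gives $\lVert I - ADF(\bar{x}_{\sF}) \rVert_{B(X_{\sF})}\le Z_1(\bar{x}_{\sF})$.

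I do not expect a substantive obstacle here: once the decomposition is in place, the matrix norm of $B$ is a routine finite computation and the two tail inequalities are one-liners. The only points requiring genuine care are the support bookkeeping behind fact (i) — making sure the $[0,M]$-output modes of $DF(\bar{x}_{\sF})h$ really depend only on the $[0,2M]$-input modes, so that $B$ is honestly finite-rank and the first summand of $Z_1$ is indeed computable — and the invertibility of $L_{\bar{\lambda}}$, which underlies the identity $L_{\bar{\lambda}}^{-1}\proj{(M,\infty)}{F} L_{\bar{\lambda}} = \proj{(M,\infty)}{F}$.
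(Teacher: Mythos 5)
Your proof is correct and follows essentially the same approach as the paper's. Both proofs rest on the same three ingredients: $A_f D F(\bar{x}_{\sF})$ annihilates the tail $\proj{(2M,\infty)}{F}$ (because $\gamma^{(3)}$ has support in $\{\pm 2\}$ and $\bar v$ is finitely supported); $A_\infty$ cancels $L_{\bar\lambda}$ on the tail, leaving only $-L_{\bar\lambda}^{-1}\proj{(M,\infty)}{F} f$; and the operator-norm estimates $\lVert L_{\bar{\lambda}}^{-1}\proj{(M,\infty)}{F}\rVert \le ((M+1)^2+\bar\lambda^2)^{-1/2}$ and $\lVert f\rVert \le \max\{1,|a|+|b|\nu^2\}$. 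Your bookkeeping is slightly more streamlined: you compute $A_\infty D F(\bar{x}_{\sF})h = \bigl(0,\ \proj{(M,\infty)}{F} h_v - L_{\bar{\lambda}}^{-1}\proj{(M,\infty)}{F} f(h_v)\bigr)$ in one step and arrive directly at the exact two-term decomposition of $I-ADF(\bar{x}_{\sF})$, whereas the paper splits the identity into $\proj{}{\mathbb{C}} + \proj{[0,M]}{F} + \proj{(M,2M]}{F} + \proj{(2M,\infty)}{F}$, applies the triangle inequality into three blocks, and then shows the cancellations block by block before recombining the surviving pieces into $L_{\bar{\lambda}}^{-1}\proj{(M,\infty)}{F} f$. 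The end result and the final estimates are the same. One small caveat worth stating explicitly in your argument (the paper leaves it implicit through Lemma~\ref{lemma:convolution_product}): fact (i) uses $M\ge 2$ so that $\gamma^{(3)}$ is supported in $[0,M]$ and $M+2\le 2M$, which is always satisfied in practice.
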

\begin{proof}
    We begin by decomposing the action of the operator $ADF(\bar{x}_{\sF})$ into its finite and infinite parts.
    \begin{align}
        \nonumber
        \lVert  I - A D F (\bar{x}_{\sF})  \rVert_{B(X_{\sF})}
         & =
        \lVert \proj{}{\mathbb{C}} + \proj{[0, M]}{F} + \proj{(M,2M]}{F} + \proj{(2M,\infty)}{F} - \left(A_{f} + A_{\infty}\right) D F (\bar{x}_{\sF}) \left(\proj{}{\mathbb{C}}  + \proj{[0,2M]}{F}  + \proj{(2M,\infty)}{F} \right) \rVert_{B(X_{\sF})}
        \\
        \label{eq:Z1_splitting}
         & \leq
        \lVert  \proj{}{\mathbb{C}} + \proj{[0, M]}{F}  - A_{f} D F (\bar{x}_{\sF}) \left(\proj{}{\mathbb{C}}  + \proj{[0,2M]}{F}\right) \rVert_{B(X_{\sF})} +
        \lVert A_{f} D F (\bar{x}_{\sF})  \proj{(2M,\infty)}{F} \rVert_{B(X_{\sF})}
        \\  & +
        \lVert  \proj{(M, 2M]}{F} -A_{\infty} D F (\bar{x}_{\sF}) \left(\proj{}{\mathbb{C}}  + \proj{[0,2M]}{F}\right)
        +
        \proj{(2M,\infty)}{F} - A_{\infty} D F (\bar{x}_{\sF}) \proj{(2M, \infty)}{F}\rVert_{B(X_{\sF})}.
        \nonumber
    \end{align}

    Observe that, we can express the sequence operator $f$  as a multiplicative linear operator as follows, for $v=(v^{(1)},v^{(2)}) \in S_{\sF}^2$:
    \begin{equation} \label{eq:M_f_bundle}
        f(v) = f v \bydef
        \begin{pmatrix} 0 & I \\ - a I + b(\gamma^{(3)}*_{\sF} \cdot) & 0
        \end{pmatrix}v
        =
        \begin{pmatrix}
            v^{(2)} \\
            -a v^{(1)} + b(\gamma^{(3)}*_{\sF} v^{(1)})
        \end{pmatrix},
    \end{equation}
    where we identify the linear map $f$ with its associated multiplication operator.
    Moreover, recalling the definition of the map $F$ in \eqref{eq:bundle_F}, the Fréchet derivative of the validation map is given by
    \begin{equation} \label{eq:DF(bar_x)}
        DF(\bar{x}_{\sF}) = \begin{pmatrix} 0 & \mathbf{1}_M^T \\ \bv & L_{\bar{\lambda}} - f \end{pmatrix}.
    \end{equation}
    %
    At this point, it is crucial to emphasize a key property of the operator $DF(\bar{x}_{\sF})$: it has finite bandwidth. This follows from the fact that $\mathbf{1}_M$ has finitely many nonzero entries, and that the linear multiplication operator $f$ defined in \eqref{eq:M_f_bundle} has finite bandwidth due to the finite number of nonzero Fourier coefficients of \(\gamma^{(3)}\) (specifically two, as noted in \eqref{eq:gamma34_fourier_coeffs}). With this property established, we proceed to derive a computable upper bound for each of the three terms on the right-hand side of the inequality in \eqref{eq:Z1_splitting}. First, since $DF(\bar{x}_{\sF})$ has finite bandwidth, the first term in \eqref{eq:Z1_splitting} is naturally computable.

    To address the second term, note that in \eqref{eq:Z1_splitting} we choose a finite truncation in \([0,2M]\) because \(f\) defined in \eqref{eq:L_and_f_bundle} involves only linear terms and the convolution with a sequence truncated up to mode \(M\). Therefore, to evaluate \(A_f\) at the infinite tail of a sequence we apply \eqref{eq:convolution_finite_infinite} from Lemma~\ref{lemma:convolution_product} and we obtain
    \begin{equation}
        \begin{aligned}
            \nonumber
            A_{f} D F (\bar{x}_{\sF}) \proj{(2M,\infty)}{F}
             & =
            \begin{pmatrix} 0 & 0 \\ 0 &
                \proj{[0, M]}{F}
                \bar{A}(\bar{x}_{\sF}) \proj{[0, M]}{F}
            \end{pmatrix}
            \begin{pmatrix} 0 & \mathbf{1}_M^T \\ \bv & L_{\bar{\lambda}} - f \end{pmatrix}
            \begin{pmatrix} 0 & 0 \\ 0 & \proj{(2M,\infty)}{F}
            \end{pmatrix}
            \\ &=
            \begin{pmatrix} 0 & 0 \\ 0 &
                \proj{[0, M]}{F}
                \bar{A}(\bar{x}_{\sF})
            \end{pmatrix}
            \begin{pmatrix} 0 & 0 \\ 0 & \proj{[0, M]}{F}
            \end{pmatrix}
            \begin{pmatrix} 0 & \mathbf{1}_M^T \\ \bv & L_{\bar{\lambda}} - f \end{pmatrix}
            \begin{pmatrix} 0 & 0 \\ 0 & \proj{(2M,\infty)}{F}
            \end{pmatrix}
            \\ &=
            \begin{pmatrix} 0 & 0 \\ 0 &
                \proj{[0, M]}{F}
                \bar{A}(\bar{x}_{\sF})
            \end{pmatrix}
            \begin{pmatrix} 0 & 0 \\ 0 & \ -\proj{[0,M]}{F}  f  \proj{(2M,\infty)}{F}
            \end{pmatrix}
            \\ &=
            \begin{pmatrix} 0 & 0 \\ 0 &     \proj{[0, M]}{F}
                \bar{A}(\bar{x}_{\sF})
            \end{pmatrix}
            \begin{pmatrix} 0 & 0 \\ 0 & \ -\proj{[0,M]}{F} \proj{(M,\infty)}{F}  f  \proj{(2M,\infty)}{F}
            \end{pmatrix} = \begin{pmatrix} 0 & 0 \\ 0 & 0
                            \end{pmatrix}.
        \end{aligned}
    \end{equation}
    Above, we first utilized the fact that \( L_{\bar{\lambda}} \) is a diagonal operator, which implies \( \proj{[0,M]}{F} L_{\bar{\lambda}} \proj{(2M,\infty)}{F} = 0 \). Next, we employed the relation \( f \proj{(2M,\infty)}{F} = \proj{(M,\infty)}{F} f \proj{(2M,\infty)}{F} \), followed by the observation that
    $\proj{[0,M]}{F} f \proj{(2M,\infty)}{F} = \proj{[0,M]}{F} \proj{(M,\infty)}{F} f \proj{(2M,\infty)}{F} = 0$, as it is evident that \( \proj{[0,M]}{F} \proj{(M,\infty)}{F} = 0 \). To handle the third term in \eqref{eq:Z1_splitting}, note that
    \begin{equation}
        \begin{aligned}
            \nonumber
            \proj{(M, 2M]}{F}-A_{\infty} D F (\bar{x}_{\sF}) \left(\proj{}{\mathbb{C}}  + \proj{[0,2M]}{F}\right)
             & =
            \begin{pmatrix} 0 & 0 \\ 0 & \proj{(M,2M]}{F}
            \end{pmatrix}
            -
            \begin{pmatrix} 0 & 0 \\ 0 & L_{\bar{\lambda}}^{-1}
                \proj{(M,\infty)}{F}
            \end{pmatrix}
            \begin{pmatrix} 0 & \mathbf{1}_M^T \\ \bv & L_{\bar{\lambda}} - f \end{pmatrix}
            \begin{pmatrix} 1 & 0 \\ 0 & \proj{[0,2M]}{F}
            \end{pmatrix}
            \\ &=
            \begin{pmatrix} 0 & 0 \\ 0 & \proj{(M,2M]}{F}
            \end{pmatrix}-
            \begin{pmatrix} 0 & 0 \\ 0 &L_{\bar{\lambda}}^{-1}  \proj{(M,2M]}{F}L_{\bar{\lambda}} \proj{[0,2M]}{F}
            \end{pmatrix}
            +
            \begin{pmatrix} 0 & 0 \\ 0 & \  L_{\bar{\lambda}}^{-1}
                \proj{(M,\infty)}{F} f\proj{[0,2M]}{F}
            \end{pmatrix}
            \\ &=
            \begin{pmatrix} 0 & 0 \\ 0 & \proj{(M,2M]}{F}
            \end{pmatrix}-
            \begin{pmatrix} 0 & 0 \\ 0 & \proj{(M,2M]}{F}
            \end{pmatrix}
            +
            \begin{pmatrix} 0 & 0 \\ 0 & \  L_{\bar{\lambda}}^{-1}
                \proj{(M,\infty)}{F} f\proj{[0,2M]}{F}
            \end{pmatrix}
            \\ &=
            \begin{pmatrix} 0 & 0 \\ 0 & \  L_{\bar{\lambda}}^{-1}
                \proj{(M,\infty)}{F} f\proj{[0,2M]}{F}
            \end{pmatrix}
            .
        \end{aligned}
    \end{equation}

    For the term involving the evaluation of the operator \( A_{\infty} \) on the infinite tail
    \begin{equation}
        \begin{aligned}
            \nonumber
            \proj{(2M,\infty)}{F} - \proj{(M, 2M]}{F} A_{\infty} D F (\bar{x}_{\sF}) \proj{(2M,\infty)}{F}
             & =
            \begin{pmatrix} 0 & 0 \\ 0 & \proj{(2M,\infty)}{F}
            \end{pmatrix}
            -
            \begin{pmatrix} 0 & 0 \\ 0 & L_{\bar{\lambda}}^{-1}
                \proj{(M,\infty)}{F}
            \end{pmatrix}
            \begin{pmatrix} 0 & \mathbf{1}_M^T \\ \bv & L_{\bar{\lambda}} - f \end{pmatrix}
            \begin{pmatrix} 0 & 0 \\ 0 & \proj{(2M,\infty)}{F}
            \end{pmatrix}
            \\ &=
            \begin{pmatrix} 0 & 0 \\ 0 & \  L_{\bar{\lambda}}^{-1}
                \proj{(M,\infty)}{F} f \proj{(2M,\infty)}{F}
            \end{pmatrix}
            .
        \end{aligned}
    \end{equation}

    Therefore, the right-hand size of the inequality in \eqref{eq:Z1_splitting} can be bounded by
    \begin{equation*}
        \begin{aligned}
            \lVert  I - A D F (\bar{x}_{\sF})  \rVert_{B(X_{\sF})}
             & \leq
            \lVert \proj{}{\mathbb{C}}  + \proj{[0,M]}{F} - A_{f} D F (\bar{x}_{\sF}) \left(\proj{}{\mathbb{C}}  + \proj{[0,2M]}{F}\right) \rVert_{B(X_{\sF})}
            +
            \lVert  L_{\bar{\lambda}}^{-1}
            \proj{(M,\infty)}{F} f\proj{[0,\infty)}{F}
            \rVert_{B(X_{\sF})}
            \\&
            \leq
            \lVert \proj{}{\mathbb{C}}  + \proj{[0,M]}{F} - A_{f} D F (\bar{x}_{\sF}) \left(\proj{}{\mathbb{C}}  + \proj{[0,2M]}{F}\right) \rVert_{B(X_{\sF})}
            +
            \lVert  L_{\bar{\lambda}}^{-1}
            \proj{(m,\infty)}{F} \rVert_{B(X_{\sF})} \lVert f  \rVert_{B(S_{\sF}^2)}.
        \end{aligned}
    \end{equation*}
    As previously mentioned, the first term is computable. For the second term, note that the product of sequences preserves algebraic properties such as commutativity, associativity, and distributivity over sequence addition. Discrete convolutions also satisfy the following Banach algebra property:
    \begin{equation}
        \label{eq:convolution_property}
        \| p *_{\sF} q \|_{{\sF}} \leq \| p \|_{{\sF}} \| q \|_{{\sF}} \quad \text{for all} \quad p,q \in S_{\sF}.
    \end{equation}
    This allows us to bound the norm of the operator $f$ as follows. Take $\lVert v  \lVert _{\sF} \leq 1 $ we have
    \begin{equation} \label{eq:bundle_f_bound}
        \begin{aligned}
            \lVert f(v ) \rVert_{B( S_{\sF}^2)}
            =
            \lVert
            \begin{pmatrix}
                0                    & I \\
                -aI  + b\gamma^{(3)} & 0
            \end{pmatrix}
            v
            \rVert_{ B(S_{\sF}^2)}
            =
             &
            \max \left\{ \lVert v^{(2)} \rVert_{\sF}, \lVert -av^{(1)} +  b\left(\gamma^{(3)}*_{\sF}v^{(1)}\right) \rVert_{\sF} \right\}
            \\  & \leq
            \max \left\{ \lVert v^{(2)} \rVert_{\sF}, \lVert av^{(1)} \rVert_{\sF} + | b | \lVert \gamma^{(3)}*_{\sF}v^{(1)} \rVert_{\sF} \right\}
            \\  & \leq
            \max \left\{ 1, |a| + | b | \lVert \gamma^{(3)} \rVert_{\sF} \right \}.
        \end{aligned}
    \end{equation}
    Since we know periodic orbit $\gamma$ explicitly, we have that
    \[
        \gamma^{(3)}_{m} =
        \begin{cases}
            \frac{1}{2}, & m = \pm 2,    \\
            0,           & m \neq \pm 2,
        \end{cases}
    \]
    and hence $\lVert \gamma^{(3)}  \rVert_{\sF}=\nu^2$. Next we estimate the norm of the infinite tail of operator $L_{\bar \lambda}^{-1}$ as follows
    \begin{align}
        \label{eq:bundle_L_bound}
        \lVert L_{\bar{\lambda}}^{-1}
        \proj{(M,\infty)}{F} \rVert_{B(X_{\sF})}
         & \leq
        \frac{1}{\sqrt{(M+1)^2 + \bar{\lambda}^2}}.
    \end{align}

    And thus using \eqref{eq:bundle_f_bound} and \eqref{eq:bundle_L_bound} we obtain a computable $Z_1$ bound given by \eqref{eq:bundle_Z1}.
\end{proof}

Note that equation \eqref{eq:bundle_L_bound} provides a straightforward way to obtain a computable upper bound for the operator norm of \(A\):
\begin{equation}
    \label{eq:bundle_A_norm}
    \|A\|_{X_{\sF}} \le \max\left\{\lVert A_{f}  \rVert_{B(X_{\sF})} ~,~ \frac{1}{\sqrt{(M+1)^2 + \bar{\lambda}^2}} \right\}.
\end{equation}
We now compute a computable bound for \(Z_2(\bar{x}_{\sF})\) using this operator norm bound.
\begin{lemma}
    \label{lemma:bundle_Z2}
    A computable $Z_2(\bar{x}_{\sF})$  satisfying  $\lVert A(DF(x) - DF(\bar{x}_{\sF})) \rVert_{B(X_{\sF})} \leq Z_2(\bar{x}_{\sF}) r$  for all $x$ in $B(\bar{x}_{\sF}, r)$ is given by
    \begin{equation}
        \label{eq:bundle_Z2}
        Z_2(\bar{x}_{\sF}) \bydef  2 \max\left\{\lVert A_{f}  \rVert_{B(X_{\sF})} ~,~ \frac{1}{\sqrt{(M+1)^2 + \bar{\lambda}^2}} \right\}.
    \end{equation}
\end{lemma}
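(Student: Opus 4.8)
The plan is to exploit the very simple dependence of $DF$ on the base point $x=(\lambda,v)$. First I would extend the formula \eqref{eq:DF(bar_x)} for the Fréchet derivative from the approximate solution $\bar{x}_{\sF}$ to an arbitrary point $x=(\lambda,v)\in X_{\sF}$. Since the only nonlinearity in $F$ is the bilinear term $(\lambda,v)\mapsto L_\lambda v$ (the convolution with $\gamma^{(3)}$ being linear in $v$ and $\eta$ being linear), the map $F$ is quadratic and its derivative is affine in $x$; concretely
\[
    DF(x)=\begin{pmatrix} 0 & \mathbf{1}_M^T \\ v & L_\lambda - f \end{pmatrix},
\]
so that, using that $L_\lambda$ is the diagonal operator with entries $im+\lambda$ and hence $L_\lambda - L_{\bar\lambda}=(\lambda-\bar\lambda)I$,
\[
    DF(x)-DF(\bar{x}_{\sF})=\begin{pmatrix} 0 & 0 \\ v-\bv & (\lambda-\bar\lambda)I \end{pmatrix}.
\]

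Next I would estimate the operator norm of this difference. For a test element $h=(\mu,w)\in X_{\sF}$ with $\|h\|_{X_{\sF}}\le 1$ — equivalently $|\mu|\le 1$ and $\|w\|_{S^{2}_{\sF}}\le 1$ — one has $\bigl(DF(x)-DF(\bar{x}_{\sF})\bigr)h=\bigl(0,\ \mu(v-\bv)+(\lambda-\bar\lambda)w\bigr)$, and the triangle inequality gives $\|(DF(x)-DF(\bar{x}_{\sF}))h\|_{X_{\sF}}\le \|v-\bv\|_{S^{2}_{\sF}}+|\lambda-\bar\lambda|$. If $x\in B(\bar{x}_{\sF},r)$ then, because the norm on $X_{\sF}$ is the maximum of $|\lambda-\bar\lambda|$ and $\|v-\bv\|_{S^{2}_{\sF}}$, both summands are bounded by $r$, whence $\|DF(x)-DF(\bar{x}_{\sF})\|_{B(X_{\sF})}\le 2r$.

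Finally I would compose with $A$ and apply submultiplicativity of the operator norm together with the computable bound \eqref{eq:bundle_A_norm} on $\|A\|_{X_{\sF}}$, obtaining
\[
    \|A(DF(x)-DF(\bar{x}_{\sF}))\|_{B(X_{\sF})}\le \|A\|_{B(X_{\sF})}\,2r\le 2r\max\!\left\{\|A_f\|_{B(X_{\sF})},\ \tfrac{1}{\sqrt{(M+1)^2+\bar\lambda^2}}\right\},
\]
which is exactly $Z_2(\bar{x}_{\sF})\,r$ with $Z_2$ as in \eqref{eq:bundle_Z2}. Computability of $Z_2$ is inherited from that of $\|A_f\|_{B(X_{\sF})}$, which is the norm of a finite-rank operator represented by an explicit matrix, and from the explicit scalar $((M+1)^2+\bar\lambda^2)^{-1/2}$.

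I do not expect a genuine obstacle in this proof: the affine-in-$x$ structure of $DF$ makes the estimate exact and global (in particular independent of $r^*$, so no restriction on the radius is needed here). The only point requiring a little care is the bookkeeping with the product-space maximum norm — making sure the contributions from $v-\bv$ and from $\lambda-\bar\lambda$ combine additively, producing the factor $2$, rather than being merged by a maximum.
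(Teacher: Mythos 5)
Your proposal is correct and follows essentially the same approach as the paper's proof: exploit the affine dependence of $DF$ on $(\lambda,v)$ to compute $DF(x)-DF(\bar{x}_{\sF})$ exactly, bound its operator norm by $2r$ via the triangle inequality under the max product norm on $X_{\sF}$, and then compose with the operator-norm bound \eqref{eq:bundle_A_norm} for $A$. Your bookkeeping is in fact a bit cleaner than the paper's display (which reuses the symbols $(\lambda,v)$ for both the base point and the test direction, and carries a spurious minus sign that does not affect the estimate), but the argument is the same.
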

\begin{proof}
    Let $x = (\lambda, v)$ be and element in  $B(\bar{x}_{\sF}, r)$.
    There exists $\delta = (\lambda_{\delta}, v_\delta) \in X_{\sF}  $ with $\lVert \delta \lVert_{X_{\sF}} \ \leq r $ such that $x =\bar{x}_{\sF} + \delta$. For any $h = (\lambda, v)$ such that $\lVert h  \lVert_{X_{\sF}} \leq 1 $ we have
    \[
        \| [DF(\bar{x}_{\sF} + \delta) - DF(\bar{x}_{\sF})] h \|_{X_{\sF}}=
        \| (0, \lambda v_{\delta}  + \lambda_{\delta}v) \|_{X_{\sF}}
        \leq
        2r
    \]
    Using \eqref{eq:bundle_A_norm}  we get a computable $Z_2(\bar{x}_{\sF})$ bound as given by \eqref{eq:bundle_Z2}.
\end{proof}
The following result provides the conditions for the existence of a true solution to the ODE bundle sequence equation near a given approximate solution.
\begin{theorem} \label{Theorem:Bundle}
    Fix parameters \(a\), \(b\), and \(c\), a weight \(\nu\) for the norm in \eqref{eq:S_F}, and a scaling factor \(l\) for the phase condition \eqref{eq:phase_conditon}. Let \(\bar{x}_{\sF}= (\bar{\lambda}, \bar{v}) \in X_{\sF}\) such that each component of $\bar{v}$ is supported in $[0,M]$ for some truncation mode \(M\) and that \(\bar{\lambda}<0\) and is real. Assume that the coefficients of \(\bar{v}\) satisfy the symmetry condition:
    \begin{equation} \label{eq:conjugate_assumption_bundle}
        \bar{v}_k^{(i)} = [\bar{v}_{-k}^{(i)}]^{*}, \quad \text{for} \quad i=1,2, \quad k \in \mathbb{Z}.
    \end{equation}
    Additionally, assume that the matrix \(\bar{A}(\bar{x}_{\sF})\) from operator \eqref{eq:bundle_A} is computed as the numerical inverse of
    \[
        \left( \proj{}{\mathbb{C}} + \proj{[0, M]}{F} \right)
        DF(\bar{x}_{\sF})\left( \proj{}{\mathbb{C}} + \proj{[0, M]}{F} \right).
    \]
    Suppose the computable bounds \(Y(\bar{x}_{\sF})\), \(Z_1(\bar{x}_{\sF})\), and \(Z_2(\bar{x}_{\sF})\) defined in \eqref{eq:bundle_Y}, \eqref{eq:bundle_Z1}, and \eqref{eq:bundle_Z2} satisfy
    \[
        Z_{1}(\bar{x}_{\sF}) < 1 \quad \text{and} \quad Z_{2}(\bar{x}_{\sF}) < \frac{(1 - Z_{1}(\bar{x}_{\sF}))^{2}}{2Y(\bar{x}_{\sF})}.
    \]
    Then, the validation map \eqref{eq:bundle_F} has an unique zero
    \(
    x_{\sF} \bydef (\lambda, v^{(1)}, v^{(2)}),
    \)
    in the ball $B_{X_{\sF}}(\bar{x}_{\sF}, r_{\sF})$ where the radius of the ball is given by:
    \[
        r_{\sF}=\frac{1 - Z_1(\bar{x}_{\sF})- \sqrt{ (1- Z_1(\bar{x}_{\sF}))^2 - 2Y(\bar{x}_{\sF})Z_2(\bar{x}_{\sF})}}{Z_2(\bar{x}_{\sF})}.
    \]
    Then the eigenvalue $\lambda$ is real. Moreover, $(\lambda, v^{(1)}, v^{(2)}, 0, 0) \in \mathbb{R} \times S^{4}_{\sF}$, which satisfies the sequence equation \eqref{eq:bundle_coeff}, corresponds to a real solution of \eqref{eq:bundle_ODE}. Finally, if \(\lambda < 0\), this solution provides a parameterization of a stable linear bundle associated with the periodic orbit \(\gamma\).
\end{theorem}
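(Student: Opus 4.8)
The plan is to deduce the existence and uniqueness part directly from the Newton--Kantorovich Theorem~\ref{Theorem:Newton}, and then to promote the abstract zero of $F$ to a genuine real solution of the bundle equation \eqref{eq:bundle_ODE} by exploiting a conjugation symmetry of the problem. First I would confirm that the operator $A$ of \eqref{eq:bundle_A} is injective, as Theorem~\ref{Theorem:Newton} requires. Writing $A=A_f+A_\infty$, the tail part $A_\infty=L_{\bar{\lambda}}^{-1}\proj{(M,\infty)}{F}$ is diagonal with nonzero entries $(im+\bar{\lambda})^{-1}$ (recall $\bar{\lambda}<0$) and acts on a subspace complementary to the finite block on which $A_f$ is supported; hence injectivity of $A$ reduces to nonsingularity of the numerical matrix $\bar{A}(\bar{x}_{\sF})$, which is checked with interval arithmetic (or follows a posteriori from $Z_1(\bar{x}_{\sF})<1$, which forces $\proj{[0,M]}{F}ADF(\bar{x}_{\sF})\proj{[0,2M]}{F}$ to be invertible). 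Given that the computable bounds $Y(\bar{x}_{\sF})$, $Z_1(\bar{x}_{\sF})$, $Z_2(\bar{x}_{\sF})$ of Lemmas~\ref{lemma:bundle_Y}--\ref{lemma:bundle_Z2} satisfy the hypotheses $Z_1<1$ and $Z_2<(1-Z_1)^2/(2Y)$, Theorem~\ref{Theorem:Newton} yields a unique zero $x_{\sF}=(\lambda,v^{(1)},v^{(2)})$ of $F$ in $B_{X_{\sF}}(\bar{x}_{\sF},r_{\sF})$ with $r_{\sF}$ as stated.

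Next I would establish that $\lambda$ is real by an equivariance argument. Define the $\mathbb{R}$-linear involution $\iota$ on $X_{\sF}$, and on the codomain $\mathbb{C}\times S_{\sF}^2$, by $\iota(\lambda,v)\bydef(\lambda^{*},\tilde v)$ where $(\tilde v)_k^{(i)}\bydef[v_{-k}^{(i)}]^{*}$. Using that $a$, $b$, $l$ are real, that $\mathbf{1}_M$ is symmetric, and that $\gamma^{(3)}_k=[\gamma^{(3)}_{-k}]^{*}$ (indeed $\gamma^{(3)}_{\pm2}=\tfrac12$), one checks that $F\circ\iota=\iota\circ F$; the only step that is not immediate is the identity $(\gamma^{(3)}*_{\sF}\tilde v^{(1)})_m=[(\gamma^{(3)}*_{\sF}v^{(1)})_{-m}]^{*}$, which comes from reindexing the convolution sum. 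Since $\bar{x}_{\sF}$ is $\iota$-fixed --- this is precisely the symmetry assumption \eqref{eq:conjugate_assumption_bundle} together with $\bar{\lambda}\in\mathbb{R}$ --- and $\iota$ is an isometry, the ball $B_{X_{\sF}}(\bar{x}_{\sF},r_{\sF})$ is $\iota$-invariant, so $\iota(x_{\sF})$ is also a zero of $F$ in that ball. Uniqueness forces $\iota(x_{\sF})=x_{\sF}$, hence $\lambda=\lambda^{*}\in\mathbb{R}$ and $v_k^{(i)}=[v_{-k}^{(i)}]^{*}$ for $i=1,2$, i.e.\ each $v^{(i)}(\theta)=\sum_m v_m^{(i)}e^{im\theta}$ is real-valued.

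It then remains to pass from the sequence solution to a classical solution. Setting $v^{(3)}=v^{(4)}=0$, the pair $(\lambda,v)$ satisfies every equation of \eqref{eq:bundle_coeff}: the first two are the content of $L_\lambda v-f(v)=0$, and the last two hold trivially, in agreement with the $2\times2$ argument around \eqref{eq:bundle_v3_v4}. Because $\nu\ge1$ and the recursion $v_m^{(2)}=(im+\lambda)v_m^{(1)}$ forces $\sum_m|m|\,|v_m^{(1)}|\nu^{|m|}<\infty$ (and likewise through the second equation of \eqref{eq:bundle_coeff}), the Fourier series of $v^{(1)},v^{(2)}$ and of their first derivatives converge absolutely, so termwise differentiation is legitimate and $\hat v(\theta)\bydef(v^{(1)}(\theta),v^{(2)}(\theta),0,0)$ is a genuine $C^1$ --- and, bootstrapping, smooth --- real $2\pi$-periodic solution of \eqref{eq:bundle_ODE}. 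Finally, if $\lambda<0$, then $\xi(\theta)\bydef e^{\lambda\theta}\hat v(\theta)$ solves the variational equation $\dot\xi=Dg(\gamma(\theta))\xi$ along $\gamma$ and decays as $\theta\to+\infty$; thus $\hat v$ spans a one-dimensional stable subbundle, which is exactly the first-order datum $W_1(\theta)=v(\theta)$ of \eqref{eq:first_order_constraints}, that is, a parameterization of the stable linear bundle of $\gamma$.

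The step I expect to be the main obstacle is the equivariance verification $F\circ\iota=\iota\circ F$ --- in particular confirming that the discrete convolution against $\gamma^{(3)}$ intertwines correctly with the conjugate-and-reflect involution --- together with checking that the center $\bar{x}_{\sF}$ and the uniqueness ball are genuinely $\iota$-invariant, since it is this invariance, combined with the uniqueness clause of Theorem~\ref{Theorem:Newton}, that delivers the reality of $\lambda$. The remaining points (injectivity of $A$ and absolute convergence of the differentiated Fourier series) are routine once the weighted $\ell^1$ structure and the coefficient recursion are exploited.
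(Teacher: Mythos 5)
Your proposal matches the paper's proof: both apply Theorem~\ref{Theorem:Newton} with Lemmas~\ref{lemma:bundle_Y}--\ref{lemma:bundle_Z2} to obtain the unique zero, and both establish reality of $\lambda$ via the conjugate-and-reflect involution (your $\iota$ is the paper's $C^{*}$), showing it commutes with $F$, fixes $\bar{x}_{\sF}$ by \eqref{eq:conjugate_assumption_bundle}, and preserves the uniqueness ball, whence $\iota(x_{\sF})=x_{\sF}$. The extra remarks you add on injectivity of $A$, absolute convergence of the differentiated Fourier series, and the identification with the stable subbundle are consistent elaborations, not a different route.
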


\begin{proof}
    Using Lemmas~\ref{lemma:bundle_Y}, \ref{lemma:bundle_Z1}, and \ref{lemma:bundle_Z2}, it follows from the Newton-Kantorovich Theorem~\ref{Theorem:Newton} that there exists a unique $x_{\sF} =(\lambda, v^{(1)},v^{(2)}) \in X_{\sF} $ in the ball $B_{X_{\sF}}(\bar{x}_{\sF}, r_{\sF})$ with radius $r_{\sF}$ such that $F(x_{\sF}) =0$. We proceed by proving that $\lambda$ is real. Define the conjugation maps $C^{*}: X_{\sF} \to X_{\sF}$ and $C^{*}: S_{\sF} \to S_{\sF}$ as follows
    \[
        C^{*}(x) \bydef \left(\lambda^{*}, C^{*} v^{(1)}, C^{*} v^{(2)}\right), \quad [C^{*}v]_m \bydef v_{-m}^{*}
        \quad \text{for} \quad m \in \mathbb{Z},
    \]
    where given a complex number $z \in \C$, $z^*$ denotes its complex conjugate.
    For any $x \in X_{\sF}$, we have
    \[
        C^{*} F(x) = F(C^{*}(x)).
    \]
    In particular, $C^{*}(x_{\sF})$ is also a zero of $F$. By assumption \eqref{eq:conjugate_assumption_bundle} and since $\bar \lambda$ is real, then $C^{*}(\bar{x}_{\sF})=\bar{x}_{\sF}$. Now, since $\|C^{*}\|_{B(X_{\sF})} \leq 1$
    \[
        \| C^{*}(x_{\sF}) - \bar{x}_{\sF}\|_{X_{\sF}} = \| C^{*}(x_{\sF}) - C^{*}(\bar{x}_{\sF}) \|_{X_{\sF}} =
        \| C^{*}(x_{\sF}-\bar{x}_{\sF}) \| \le \|C^{*}\|_{B(X_{\sF})} \| x_{\sF} - \bar{x}_{\sF} \|_{X_{\sF}}
        \le r,
    \]
    that is $C^{*}(x_{\sF})$ is in the ball $B_{X_{\sF}}(\bar{x}_{\sF}, r_{\sF})$. By uniqueness of the zero, we obtain
    \[
        C^{*}(x_{\sF}) = x_{\sF},
    \]
    which implies that $\lambda$ is real and that the solution $(\lambda, v^{(1)}, v^{(2)}, 0, 0) $ of \eqref{eq:bundle_ODE} coming from $x_{\sF}$ is real.
\end{proof}
In the next section, we present analogous results for the sequence equation associated with the PDE \eqref{eq:manifold_PDE}. Using a solution to \eqref{eq:bundle_coeff} as obtained from Theorem \ref{Theorem:Bundle}, we compute a parameterization of the stable manifold attached to the periodic orbit $\gamma$.

\subsection{Solving for Higher Order Coefficients}

After developing a strategy to solve the bundle equation \eqref{eq:bundle_coeff}, we now focus on solving the Taylor-Fourier sequence equation \eqref{eq:manifold_coeff} to obtain a parameterization of the stable manifold associated with the periodic orbit \(\gamma\). This involves expanding the solution of \eqref{eq:manifold_PDE} as a Taylor series, with each Taylor coefficient further expanded into a Fourier series. We adopt the same method described in the previous section, working in the space of solution coefficients. As already noted at the beginning of Section~\ref{sec:manifold}, we only need to determine the Taylor coefficients for \(w^{(1)}\) and \(w^{(2)}\). The remaining coefficients are given by:
\begin{equation}
    \label{eq:manifold_3_4}
    w_{0,m}^{(j)} \bydef \gamma_{m}^{(j)}   \quad \text{and} \quad w_{1,m}^{(j)} \bydef 0  \quad \text{for} \quad j = 3,4,
\end{equation}
\[
    w_{n,m}^{(3)} = w_{n,m}^{(4)} \bydef 0 \quad \text{for} \quad n\geq 2, \quad m \in \mathbb{Z}.
\]
The Banach space used for the manifold problem is defined as follows:
\[
    X_{\sTF} \bydef S_{\sTF}^2,
    \quad
    \lVert w \rVert_{ X_{\sTF} }= \max \{ \lVert w^{(1)} \rVert_{\sTF} , \lVert w^{(2)} \rVert_{\sTF} \}.
\]
Given a sequence \(s\) in $S_{\sTF}$ and a set of indices \(R\), we define the truncation operators as follows
\[
    \left( \proj{R}{T}   w \right)_{n,m} =
    \begin{cases}        w_{n,m}, & n \in R, m\in\mathbb{Z} \\
                     0,       & \text{otherwise},
    \end{cases}
    \quad \text{and} \quad
    \left( \proj{R}{F}  w \right)_{n,m} =
    \begin{cases}
        w_{n,m}, & n \in \mathbb{N}, m\in R \\
        0,       & \text{otherwise}.
    \end{cases}
\]
As in the previous section, truncation operators act entrywise in the space \(X_{\sTF}\). Note that, since Taylor sequences are one-sided, the ranges for Taylor truncations are indexed over the natural numbers.

We now define a validation map whose zeros correspond to solutions of \eqref{eq:manifold_coeff}. In this case, the definition of the map \(F\) involves the solution of the bundle sequence equation \eqref{eq:bundle_coeff}. To address this, we consider
\[
    x_{\sF} \bydef (\lambda, v^{(1)}, v^{(2)}),
\]
a zero of \eqref{eq:bundle_F} (for a fixed set of parameters $a$, $b$, $c$) that lies in the ball \(B_{X_{\sF}}(\bar{x}_{\sF}, r_{\sF})\), where \(\bar{x}_{\sF} = (\bar{\lambda}, \bar{v}^{(1)}, \bar{v}^{(2)})\) is an element of \(X_{\sF}\) supported in \([0, M]\). Theorem \ref{Theorem:Bundle} provides the existence of such a solution in this form. Now, given \(w =(w^{(1)}, w^{(2)}) \in S_{\sTF}^2 \) and a solution \((\lambda, v) \in X_{\sF}\) to the sequence equation \eqref{eq:bundle_coeff}, we define
\begin{equation} \label{eq:manifold_F}
    F(w) \bydef \proj{[0,1]}{T} B(w,v) + \proj{[2,\infty)}{T}\left[L_{\lambda}w - f(w) \right],
\end{equation}
where \(B: S_{\sTF}^2 \times S_{\sF}^2 \to S_{\sTF}^2\) is given by $B(w,v)=\left(B^{(1)}(w,v),B^{(2)}(w,v) \right)$, with
\begin{equation}
    \label{eq:manifold_F_B}
    [B^{(i)}(w,v)]_{n,m} \bydef
    \begin{cases}
        w_{0,m}^{(i)} - \gamma^{(i)}_m, & n=0, m \in \mathbb{Z}      \\
        w_{1,m}^{(i)} - v^{(i)}_m,      & n=1, m \in \mathbb{Z}      \\
        0,                              & n\geq 2, m \in \mathbb{Z},
    \end{cases}
\end{equation}
where the linear operator $L_{\lambda}: S_{\sTF}^2 \to S_{\sTF}^2$ is defined for $i=1,2$ as
\[
    \left[L_{\lambda} w\right]_{n,m}^{(i)} \bydef
    \begin{cases}
        w_{n,m}^{(i)},                             & n = 0,1           \\
        \left(im + n\lambda \right) w_{n,m}^{(i)}, & \text{otherwise},
    \end{cases}
\]
and where
\[
    f(w)= f(w^{(1)}, w^{(2)} ) \bydef
    \begin{pmatrix}
        w^{(2)} \\
        -aw^{(1)} +  b\left(w^{(3)}*_{\sTF}w^{(1)}\right) +  c\left(w^{(1)}*_{\sTF}w^{(1)}*_{\sTF}w^{(1)}\right)
    \end{pmatrix}.
\]

We use the same notation for the validation map \(F\), the vector field \(f\) and the linear operator $L$ as in the bundle problem from the previous section. However, to distinguish between the maps, we use different notation for the variables, making it clear from the evaluation which function is being referenced. We now choose \(N\) as the Taylor truncation mode and proceed to define the operator \(A\). Note that the infinite tail of a sequence in \(S_{\sTF}\) is given by the following truncation
\[
    \proj{[0,N]}{T}  \proj{(M,\infty)}{F}
    +
    \proj{(N,\infty)}{T}.
\]
Observe that the definition of $F$ in \eqref{eq:manifold_F} is different in the first two Taylor coefficients. Thus, we define
\begin{equation}
    \label{eq:manifold_A}
    A \bydef A_{f} + A_{\infty},
    \quad
    A_{\infty} \bydef
    \proj{[0,1]}{T}  \proj{(M,\infty)}{ F}
    +
    \proj{[2,N]}{T}  \proj{(M,\infty)}{ F}L_{\lambda}^{-1} +
    \proj{(N,\infty)}{T} L_{\lambda}^{-1},
\end{equation}
where
\[
    \left[L_{\lambda}^{-1} w\right]_{n,m}^{(i)} =
    \begin{cases}
        w_{n,m}^{(i)},                                    & n = 0,1           \\
        \left( im + n\lambda \right)^{-1} w _{n,m}^{(i)}, & \text{otherwise.}
    \end{cases}
\]
and with
\[
    A_{f} \bydef \proj{[0,N]}{T}   \proj{[0,M]}{F} \bar{A}(\bar{w}) \proj{[0,N]}{T}   \proj{[0,M]}{F},
\]
where \(\bar{A} = \bar{A}(\bar{w})\) corresponds to a finite-dimensional matrix.
In practice, we use the numerical inverse of the truncated derivative of the map \(F\) at \(\bar{w}\). Note that we suppose that $\bar{w} \in S_{\sTF}^2$ satisfies
\begin{equation*}
    \bar{w}  = \proj{[0,N]}{T} \proj{[0,M]}{F} \bar{w} =  \left( \proj{[0,N]}{T} \proj{[0,M]}{F} \bar{w}^{(1)},  \proj{[0,N]}{T} \proj{[0,M]}{F} \bar{w}^{(2)}  \right),
\end{equation*}
and we define the first two Taylor coefficients of $\bar{w}_{n,m}^{(1)}$ and $\bar{w}_{n,m}^{(2)}$ as follows
\begin{equation} \label{eq:w_bar_0_1}
    \bar{w}_{0,m}^{(j)} = 0 \quad \text{and} \quad \bar{w}_{1,m}^{(j)} = \bar{v}_{m}^{(j)}  \quad \text{for} \quad j = 1,2.
\end{equation}
We now present a set of bounds, as required by Theorem~\ref{Theorem:Newton}, that can be explicitly computed using a computer.
\begin{lemma} \label{lemma:manifold_Y}
A computable upper bound for $\lVert A F (\bar{w}) \rVert_{X_{\sTF}}$ is given by
    \begin{equation}
        \label{eq:manifold_Y}
        \begin{aligned}
            Y(\bar{w}, r_{\sF}) \bydef &
            r_{\sF} \lVert A_{f} \rVert_{B(X_{\sTF})}
            +
            \lVert  A_{f} \proj{[2,N]}{T} \proj{[0,M]}{F} F(\bar{w}) \rVert_{X_{\sTF}}
            \\&
            + r_{\sF}
            + \lVert\proj{(N,3N]}{T} \proj{[0,3M]}{F}  L_{\lambda}^{-1} F (\bar{w})\rVert_{X_{\sTF}}
            + \lVert\proj{[2,N]}{T} \proj{(M,3M]}{F}  L_{\lambda}^{-1} F (\bar{w})\rVert_{X_{\sTF}}
            .
        \end{aligned}
    \end{equation}
\end{lemma}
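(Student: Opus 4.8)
The plan is to mirror the proof of Lemma~\ref{lemma:bundle_Y}: decompose $AF(\bar w) = A_f F(\bar w) + A_\infty F(\bar w)$, determine the Taylor and Fourier supports of $F(\bar w)$, and show that each resulting block is either explicitly computable or controlled by $r_{\sF}$. The norms $\|A_f\|_{B(X_{\sTF})}$, $\|A_f\,\proj{[2,N]}{T}\proj{[0,M]}{F}F(\bar w)\|_{X_{\sTF}}$, and the two $L_\lambda^{-1}$-tail norms in \eqref{eq:manifold_Y} are then computable in interval arithmetic once $\lambda$ and $v$ are enclosed.

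First I would pin down $F(\bar w)$. Since $\bar w$ is supported in $[0,N]\times[0,M]$ with $\bar w_{0,m}^{(j)} = 0$ and $\bar w_{1,m}^{(j)} = \bar v_m^{(j)}$ (see \eqref{eq:w_bar_0_1}), and since $\gamma^{(1)} = \gamma^{(2)} = 0$, the $n=0$ Taylor block of $F(\bar w)=\proj{[0,1]}{T}B(\bar w,v)+\proj{[2,\infty)}{T}[L_\lambda \bar w - f(\bar w)]$ vanishes identically, the $n=1$ block equals $\bar v_m^{(j)} - v_m^{(j)}$, and for $n\ge 2$ the block is $L_\lambda\bar w - f(\bar w)$. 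From Theorem~\ref{Theorem:Bundle} we have $\|x_{\sF}-\bar x_{\sF}\|_{X_{\sF}}\le r_{\sF}$, so the $n=1$ block has $S^2_{\sF}$-norm $\le r_{\sF}$ and, since $\bar v$ is supported in $[0,M]$, its Fourier tail past mode $M$ is exactly $-v_m^{(j)}$. A Taylor-Fourier analogue of Lemma~\ref{lemma:convolution_product}---the cubic convolution at most triples each support and the $\gamma^{(3)}$ convolution only widens the Fourier support by $2$---then shows that $\proj{[2,\infty)}{T}[L_\lambda\bar w - f(\bar w)]$ is supported in $[2,3N]\times[0,3M]$.

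Next I would bound the finite part. Writing $\proj{[0,N]}{T}\proj{[0,M]}{F}F(\bar w)$ as its $n=1$ piece (of norm $\le r_{\sF}$) plus its computable $n\ge 2$ piece $\proj{[2,N]}{T}\proj{[0,M]}{F}F(\bar w)$, applying $A_f$ and the triangle inequality gives $\|A_f F(\bar w)\|_{X_{\sTF}}\le r_{\sF}\|A_f\|_{B(X_{\sTF})}+\|A_f\,\proj{[2,N]}{T}\proj{[0,M]}{F}F(\bar w)\|_{X_{\sTF}}$, the first two terms of \eqref{eq:manifold_Y}. For the tail I would split along the three summands of $A_\infty$: the block $\proj{[0,1]}{T}\proj{(M,\infty)}{F}F(\bar w)$ lives only in the $n=1$ slot, equal there to the Fourier tail of $-v$, hence of norm $\le r_{\sF}$; the block $\proj{(N,\infty)}{T}L_\lambda^{-1}F(\bar w)$ collapses, by the support from the previous step, to $\proj{(N,3N]}{T}\proj{[0,3M]}{F}L_\lambda^{-1}F(\bar w)$; and $\proj{[2,N]}{T}\proj{(M,\infty)}{F}L_\lambda^{-1}F(\bar w)$ collapses, by intersecting truncation supports with $[2,3N]\times[0,3M]$, to $\proj{[2,N]}{T}\proj{(M,3M]}{F}L_\lambda^{-1}F(\bar w)$. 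Summing these three contributions and using $\|AF(\bar w)\|_{X_{\sTF}}\le\|A_f F(\bar w)\|_{X_{\sTF}}+\|A_\infty F(\bar w)\|_{X_{\sTF}}$ yields exactly \eqref{eq:manifold_Y}.

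The routine part is the support bookkeeping; the genuine care lies in the fact that $F$ is built from the \emph{exact} bundle data $(\lambda,v)$, known only to lie in a ball of radius $r_{\sF}$. Every occurrence of $v$ in $B(\bar w,v)$ must be traded for $\bar v$ at the cost of an additive $r_{\sF}$ (this is the source of the two standalone $r_{\sF}$ contributions, one from $A_f$ and one from the $n=1$ tail), and $\lambda$ in $L_\lambda$ and $L_\lambda^{-1}$ must be enclosed via $|\lambda-\bar\lambda|\le r_{\sF}$ in interval arithmetic; only after this do $\proj{[2,N]}{T}\proj{[0,M]}{F}F(\bar w)$, $\proj{(N,3N]}{T}\proj{[0,3M]}{F}L_\lambda^{-1}F(\bar w)$ and $\proj{[2,N]}{T}\proj{(M,3M]}{F}L_\lambda^{-1}F(\bar w)$ become genuinely finite-dimensional, computable objects. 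Getting the support arithmetic---the $3N$ in the Taylor direction and the $3M$ in the Fourier direction---exactly right is precisely what guarantees these truncations are finite, so I expect that to be the only delicate point.
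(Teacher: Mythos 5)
Your proposal matches the paper's proof essentially verbatim: both decompose $AF(\bar w)=A_f F(\bar w)+A_\infty F(\bar w)$, identify the support of $F(\bar w)$ as $\proj{[0,1]}{T}B(\bar w,v)+\proj{[2,3N]}{T}\proj{[0,3M]}{F}F(\bar w)$ via the cubic-convolution support analogue of Lemma~\ref{lemma:convolution_product}, observe that the only non-computable piece $B(\bar w,v)$ has norm $\le r_{\sF}$ (its $n=0$ block vanishing and its $n=1$ block being $\bar v-v$), and split $A_\infty$ along its three summands so each tail truncation collapses to a finite, computable block. The bookkeeping of the $3N$/$3M$ supports and the two standalone $r_{\sF}$ contributions is exactly what the paper does.
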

\begin{proof}
    Since the vector field $f(\bar{w})$ includes a cubic term, an analogous result to Lemma~\ref{lemma:convolution_product} implies that
    \begin{equation*}
        \begin{aligned}
            F (\bar{w}) & =    \proj{[0,1]}{T} F (\bar{w})
            +
            \proj{[2,\infty)}{T} F(\bar{w})
            =
            \proj{[0,1]}{T} B (\bar{w},v) +
            \proj{[2,N]}{T} \proj{[0,M]}{F}  L_{\lambda} \bar{w} - \proj{[2,3N]}{T} \proj{[0,3M]}{F} f(\bar{w})
            \\ &
            =
            \proj{[0,1]}{T} B (\bar{w},v)
            +
            \proj{[2,3N]}{T} \proj{[0,3M]}{F} F(\bar{w}).
        \end{aligned}
    \end{equation*}
    Therefore 
    \begin{equation*}
        \begin{aligned}
            A_{\infty} F (\bar{w}) & =
            \proj{[0,1]}{T}  \proj{(M,\infty)}{ F} B (\bar{w},v)
            +
            \left( \proj{(N,\infty)}{T} L_{\lambda}^{-1} \proj{[2,3N]}{T} \proj{[1,3M]}{F}  +
            \proj{[2,N]}{T}  \proj{(M,\infty)}{ F}L_{\lambda}^{-1} \proj{[2,3N]}{T} \proj{[1,3M]}{F}
            \right) F(\bar{w})
            \\& =
            \proj{[0,1]}{T}  \proj{(M,\infty)}{ F} B (\bar{w},v)
            +
            \left(
            \proj{(N,3N]}{T} \proj{[0,3M]}{F} +
            \proj{[2,N]}{T} \proj{(M,3M]}{F}
            \right) L_{\lambda}^{-1} F(\bar{w})
        \end{aligned}
    \end{equation*}
    while the evaluation of the operator $A_f$ is given by
    \[
        A_{f}F (\bar{w})  =  A_{f}\proj{[0,1]}{T} \proj{[0,M]}{F} B (\bar{w},v)
        +
        A_{f}\proj{[2,N]}{T} \proj{[0,M]}{F} F(\bar{w}).
    \]
    Recall that we do not know the exact value of \(\lambda\), but we know it lies in an interval of radius \(r_{\sF}\) centered at \(\bar{\lambda}\). This creates no computability issues because, as discussed earlier, we use interval arithmetic to perform computations on intervals. In this setting, computing the norm of a computable expression involving \(\lambda\) produces an interval, whose upper boundary provides a rigorous upper bound for the computed norm. On the other hand, the term \( B(\bar{w}, v)\) depends on the solution $v$ of the bundle problem \eqref{eq:bundle_coeff}, an infinite sequence. Since \(v\) lies within the ball in  $S_{\sF}^2$  of radius \(r_{\sF}\) centered at \(\bar{v}\), there exists a \(\delta\)  such that
    \(
    v = \bar{v} + \delta, \)  with
    \( \lVert \delta \rVert_{S_{\sF}^2} \leq r_{\sF}.
    \)
    From the definition of $B$ in \eqref{eq:manifold_F_B} and the definition of $\bar{w}$ in \eqref{eq:w_bar_0_1} follows that
    \begin{equation}
        [B^{(i)}(\bar{w},v)]_{n,m} =
        \begin{cases}
            \bar{v}_{m}^{(i)} - v^{(i)}_m, & n=1, m \in \mathbb{Z}      \\
            0,                             & n \neq 1, m \in \mathbb{Z}
        \end{cases}
        \quad
        \text{for}
        \quad i=1,2.
    \end{equation}
    Therefore,
    \begin{align*}
        \lVert B(\bar{w},v) \rVert_{X_{\sTF}}
         & =
        \max  \left \{  \lVert B^{(1)}(\bar{w},v) \rVert_{{\sTF}}  , \lVert B^{(2)}(\bar{w},v) \rVert_{{\sTF}}    \right\}
        \\ & =
        \max  \left \{   \lVert \bar{v}^{(1)} - v^{(1)} \rVert_{{\sF}}  ,  \lVert \bar{v}^{(2)} - v^{(2)}\rVert_{{\sF}}    \right\}
        \\ & =
        \max  \left \{   \lVert \delta^{(1)} \rVert_{{\sF}}  ,  \lVert \delta^{(2)}\rVert_{{\sF}}    \right\}
        \leq r_{\sF}.
    \end{align*}
    Combining the above observations, we get that a computable bound for \(\lVert AF (\bar{w}) \rVert_{X_{\sTF}}\) is given by \eqref{eq:manifold_Y}.
\end{proof}

Next, we compute a $Z_1$ bound satisfying \eqref{eq:Z2_bound_general}.

\begin{lemma}
    \label{lemma:manifold_Z1}
    A computable upper bound for $\lVert  I - A D F (\bar{w})  \rVert_{B(X_{\sTF})}$ is given by
    \begin{align} \label{eq:manifold_Z1}
        Z_1(\bar{w}, \bar{x}_{\sF}) \bydef & \left\|  \proj{[0,N]}{T}\proj{[0,M]}{F} -  A_{f} D F(\bar{w})\proj{[0,3N]}{T} \proj{[0,3M]}{F} \right\|_{B(X_{\sTF})} \\
                                           & + \left(
        \frac{1}{\sqrt{4{\lambda}^2 + (M+1)^2}}
        + \frac{1}{|{\lambda}(N+1)|} \right)  \max \left\{ 1 ,
        |a|  +
        |b|\nu^2
        +  3|c|\lVert\bar{w}^{(1)}*_{\sTF}\bar{w}^{(1)} \rVert_{\sTF} \right\}.
        \nonumber
    \end{align}
\end{lemma}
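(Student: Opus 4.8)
The plan is to follow the template of Lemma~\ref{lemma:bundle_Z1}, adapted to the Taylor--Fourier operator coming from \eqref{eq:manifold_F} and to the extra support spreading produced by the cubic term. Writing $h=(h^{(1)},h^{(2)})\in X_{\sTF}$, the Fréchet derivative is
\[
    DF(\bar w)\,h = \proj{[0,1]}{T} D_w B(\bar w,v)\,h + \proj{[2,\infty)}{T}\bigl[L_\lambda h - Df(\bar w)\,h\bigr],
\]
where $[D_wB^{(i)}(\bar w,v)\,h]_{n,m}=h^{(i)}_{n,m}$ for $n\in\{0,1\}$ and is $0$ for $n\ge 2$, and $Df(\bar w)\,h = \bigl(h^{(2)},\, -a h^{(1)} + b\,(w^{(3)}*_{\sTF}h^{(1)}) + 3c\,(\bar w^{(1)}*_{\sTF}\bar w^{(1)}*_{\sTF}h^{(1)})\bigr)$. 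The only parameter in $DF(\bar w)$ not known exactly is $\lambda$, which by Theorem~\ref{Theorem:Bundle} lies in the interval of radius $r_{\sF}$ about $\bar\lambda$; all $\lambda$-dependent quantities below are therefore evaluated in interval arithmetic, exactly as in the proof of Lemma~\ref{lemma:manifold_Y}. Since $\bar w$ is supported in $\proj{[0,N]}{T}\proj{[0,M]}{F}$, the operator $Df(\bar w)$ spreads Taylor degree by at most $2N$ and Fourier mode by at most $2M$, while $L_\lambda$ and $D_wB$ spread neither.

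Next I would decompose $I-ADF(\bar w)$ by inserting the pairwise-orthogonal truncations $\proj{[0,1]}{T},\proj{[2,N]}{T},\proj{(N,\infty)}{T}$ in Taylor degree and $\proj{[0,M]}{F},\proj{(M,\infty)}{F}$ in Fourier mode, using $A=A_f+A_\infty$ with $A_\infty$ as in \eqref{eq:manifold_A}. The operator $A$ is designed to match the two regimes of $F$: on $\proj{[0,1]}{T}$ the map $F$ equals $B$, whose linear part is the identity, and there $A_\infty$ carries no $L_\lambda^{-1}$; on $\proj{[2,\infty)}{T}$, $F$ is of ODE type and $A_\infty$ inverts $L_\lambda$. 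Consequently every block in which $A_\infty$ meets $DF(\bar w)$ collapses: on $\proj{[0,1]}{T}\proj{(M,\infty)}{F}$ one gets $\proj{[0,1]}{T}\proj{(M,\infty)}{F}D_wB(\bar w,v)=\proj{[0,1]}{T}\proj{(M,\infty)}{F}$, cancelling the corresponding block of $I$; on $\proj{[2,N]}{T}\proj{(M,\infty)}{F}$ and on $\proj{(N,\infty)}{T}$ one gets $L_\lambda^{-1}(L_\lambda h - Df(\bar w)h)=h-L_\lambda^{-1}Df(\bar w)h$, whose first summand cancels $I$. A support-propagation argument --- the Taylor--Fourier analogue of Lemma~\ref{lemma:convolution_product}, already invoked in the proof of Lemma~\ref{lemma:manifold_Y} for the cubic term --- shows moreover that $A_fDF(\bar w)$ annihilates every input not supported in $\proj{[0,3N]}{T}\proj{[0,3M]}{F}$ (an output mode in $[0,M]$ cannot be reached from an input mode beyond $3M$ through a convolution spreading by $2M$, and an output degree in $[0,N]$ cannot be reached from an input degree beyond $N$), so that $A_fDF(\bar w)=A_fDF(\bar w)\proj{[0,3N]}{T}\proj{[0,3M]}{F}$ is represented by a finite matrix. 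After all cancellations,
\[
    I-ADF(\bar w) = \Bigl(\proj{[0,N]}{T}\proj{[0,M]}{F}-A_fDF(\bar w)\proj{[0,3N]}{T}\proj{[0,3M]}{F}\Bigr) + \Bigl(\proj{[2,N]}{T}\proj{(M,\infty)}{F}+\proj{(N,\infty)}{T}\Bigr)L_\lambda^{-1}Df(\bar w),
\]
and the first term on the right is exactly the computable finite-matrix operator norm on the first line of \eqref{eq:manifold_Z1}.

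Finally I would estimate the tail term. Since $L_\lambda^{-1}$ acts diagonally with $|(im+n\lambda)^{-1}|\le((M+1)^2+4\lambda^2)^{-1/2}$ on the range of $\proj{[2,N]}{T}\proj{(M,\infty)}{F}$ and $|(im+n\lambda)^{-1}|\le |\lambda(N+1)|^{-1}$ on the range of $\proj{(N,\infty)}{T}$, these two restricted operators are bounded in operator norm by $((M+1)^2+4\lambda^2)^{-1/2}$ and $|\lambda(N+1)|^{-1}$, respectively. The Banach-algebra inequality \eqref{eq:convolution_property} lifted to $*_{\sTF}$, together with $\|w^{(3)}\|_{\sTF}=\|\gamma^{(3)}\|_{\sF}=\nu^2$, gives $\|Df(\bar w)h\|_{X_{\sTF}}\le \max\{1,\;|a|+|b|\nu^2+3|c|\,\|\bar w^{(1)}*_{\sTF}\bar w^{(1)}\|_{\sTF}\}\,\|h\|_{X_{\sTF}}$. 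Submultiplicativity of the operator norm and the triangle inequality then bound the tail term by the second line of \eqref{eq:manifold_Z1}, completing the proof.

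The step I expect to be the main obstacle is the middle one: pinning down the exact cancellation pattern, i.e.\ verifying that every block of $ADF(\bar w)$ either annihilates the matching block of $I$ or is absorbed into one of the two terms of \eqref{eq:manifold_Z1}. This is where one must be careful that $A_\infty$ uses the identity (not $L_\lambda^{-1}$) on Taylor degrees $0,1$ because $F$ is not of ODE type there, and that the Taylor--Fourier support lemma for the cubic convolution is sharp enough that $A_fDF(\bar w)$ genuinely kills the $\proj{(3M,\infty)}{F}$ tail and leaves no residual $\proj{[0,M]}{F}$ piece inside $A_\infty DF(\bar w)$. Once this bookkeeping is settled, the $L_\lambda^{-1}$ and algebra estimates are entirely routine.
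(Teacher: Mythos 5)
Your proposal is correct and follows essentially the same route as the paper's proof: split $I - ADF(\bar{w})$ using the finite/infinite truncations that match the two-regime structure of $F$, observe that $A_f DF(\bar w)$ annihilates the $\proj{(3M,\infty)}{F}$ and $\proj{(3N,\infty)}{T}$ tails while $A_\infty$ collapses against the identity and $L_\lambda^{-1}L_\lambda$, and bound the remaining tail by $\|\proj{[2,N]}{T}\proj{(M,\infty)}{F}L_\lambda^{-1}\| + \|\proj{(N,\infty)}{T}L_\lambda^{-1}\|$ times $\|Df(\bar w)\|$. Your explicit exact identity for $I-ADF(\bar w)$ after cancellation is a slightly cleaner packaging of the same bookkeeping, and the tail estimates and the Banach-algebra bound on $Df(\bar w)$ are those of \eqref{eq:manifold_tail_bound} and \eqref{eq:bound_derivative_manifold}.
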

\begin{proof}
    The explicitly expression for the derivative of the validation map is given by
    \[
        D F (\bar{w})
        =
        \proj{[0,1]}{T}
        +
        \proj{[2,\infty)}{T} \left[ L_{\lambda} -  Df (\bar{w}) \right],
    \]
    where the action of the derivative on a vector $h=(h^{(1)},h^{(2)})$ is given by
    \[
        Df(\bar{w})h =
        \begin{pmatrix}
            h^{(2)}                                                          \\
            -  ah^{(1)} +
            b\left(w^{(3)}*_{\sTF}h^{(1)}\right) +
            3c\left(\bar{w}^{(1)}*_{\sTF}\bar{w}^{(1)}*_{\sTF}h^{(1)}\right) \\
        \end{pmatrix}.
    \]
    Using that the sequence product \eqref{eq:Taylor_Fourier_product} satisfies a Banach algebra property such as \eqref{eq:convolution_property},
    \begin{equation} \label{eq:bound_derivative_manifold}
        \lVert Df(\bar{w})\rVert_{B(X_{\sTF})} = \sup_{\lVert h\rVert_{{\sTF}}\leq1} \lVert Df(\bar{w})h\rVert_{X_{\sTF}}
        \le
        \max \left\{ 1 ,
        |a|  +
        |b|\lVert w^{(3)}  \rVert_{\sTF}
        +  3|c|\lVert\bar{w}^{(1)}*_{\sTF}\bar{w}^{(1)} \rVert_{\sTF} \right\}.
    \end{equation}
    Observe that $\lVert w^{(3)}  \rVert_{\sTF}= \lVert \gamma^{(3)}  \rVert_{\sF}= \nu^2$.
    We use an analogous splitting for $ I - A D F(\bar{w})$ as in Section \ref{sec:bundle} in equation \eqref{eq:Z1_splitting}.
    \begin{equation}
        \label{eq:manifold_splitting}
        \begin{aligned}
            \nonumber
            I - A D F(\bar{w}) & =
            \proj{[0,N]}{T}\proj{[0,M]}{F} -  A_{f} D F(\bar{w})\proj{[0,3N]}{T} \proj{[0,3M]}{F}
            - \
            A_{f} D F(\bar{w}) \left[ \proj{[0,3N]}{T}\proj{(3M,\infty)}{F} + \proj{(3N,\infty)}{T} \right]
            \\ & \quad +
            \proj{[0,N]}{T}\proj{(M,3M]}{F}
            + \proj{(N,3N]}{T} \proj{[0,3M]}{F} -
            A_{\infty} D F(\bar{w})\proj{[0,3N]}{T} \proj{[0,3M]}{F}
            \\& \quad +
            \proj{[0,N]}{T}\proj{(3M,\infty)}{F}
            +
            \proj{(N,3N]}{T}\proj{(3M,\infty)}{F}
            + \proj{(3N,\infty)}{T} -
            A_{\infty} D F(\bar{w}) \left[ \proj{[0,3N]}{T}\proj{(3M,\infty)}{F} + \proj{(3N,\infty)}{T} \right].
        \end{aligned}
    \end{equation}
    Analogous properties to those in Lemma~\ref{lemma:convolution_product} hold for the product in the space $S_{\sTF}$. In this case, there is a cubic product term in $f(\bar{w})$, so for the finite part of $A$ evaluated at the infinite tail of the sequence we have
    \begin{align}
        \nonumber
        A_{f} D F(\bar{w}) \left[ \proj{[0,3N]}{T}\proj{(3M,\infty)}{F} + \proj{(3N,\infty)}{T} \right]
         & =
        A_{f} \proj{[0,N]}{T}\proj{[0,M]}{F} Df(\bar{w}) \left[  \proj{[0,3N]}{T}\proj{(3M,\infty)}{F} +  \proj{(3N,\infty)}{T}   \right] = 0.
    \end{align}
    Now, we look at the infinite part of the operator evaluated at the finite part of the sequences. It is straightforward to show that
    \begin{equation*}
        \begin{aligned}
            \proj{[0,N]}{T}\proj{(M,3M]}{F}
            + \proj{(N,3N]}{T} \proj{[0,3M]}{F} -
            A_{\infty} D F(\bar{w})\proj{[0,3N]}{T} \proj{[0,3M]}{F}
             & =
            \proj{[2,N]}{T} \proj{(M, \infty)}{F} L_{\lambda}^{-1} Df(\bar{w}) \proj{[0,3N]}{T} \proj{[0,3M]}{F}
            +
            \proj{(N,\infty)}{T} L_{\lambda}^{-1} Df(\bar{w}) \proj{[0,3N]}{T} \proj{[0,3M]}{F}.
        \end{aligned}
    \end{equation*}
    Similarly, for the infinite part of the operator evaluated at the infinite tail of the sequence we have
    \begin{align*}
         & \hspace{-1cm} \proj{[0,N]}{T}\proj{(3M,\infty)}{F} + \proj{(N,3N]}{T}\proj{(3M,\infty)}{F}
        + \proj{(3N,\infty)}{T} -
        A_{\infty} D F(\bar{w}) \left[ \proj{[0,3N]}{T}\proj{(3M,\infty)}{F} + \proj{(3N,\infty)}{T} \right]
        \\
         & =
        \proj{[2,N]}{T} \proj{(M, \infty)}{F} L_{\lambda}^{-1} Df(\bar{w}) \left[  \proj{[0,3N]}{T} \proj{(3M,\infty)}{F}  + \proj{(3N,\infty)}{T}  \right]
        + \proj{(N,\infty)}{T} L_{\lambda}^{-1} Df(\bar{w}) \left[   \proj{[0,3N]}{T} \proj{(3M,\infty)}{F} + \proj{(3N, \infty)}{T} \right].
    \end{align*}
    Putting all together we have
    \begin{equation*}
        \begin{aligned}
            \lVert I - D F(\bar{w}) \rVert_{B(X_{\sTF})}
             & \leq
            \lVert  \proj{[0,N]}{T}\proj{[0,M]}{F} -  A_{f} D F(\bar{w})\proj{[0,3N]}{T} \proj{[0,3M]}{F} \rVert_{B(X_{\sTF})}
            \\  & +
            \lVert \proj{[2,N]}{T} \proj{(M, \infty)}{F} L_{\lambda}^{-1} Df(\bar{w})
            \rVert_{B(X_{\sTF})}
            +
            \lVert \proj{(N,\infty)}{T} L_{\lambda}^{-1} Df(\bar{w})
            \rVert_{B(X_{\sTF})}
            \\  & \leq
            \lVert  \proj{[0,N]}{T}\proj{[0,M]}{F} -  A_{f} D F(\bar{w})\proj{[0,3N]}{T} \proj{[0,3M]}{F} \rVert_{B(X_{\sTF})}
            \\  & +
            \left(
            \lVert \proj{[2,N]}{T} \proj{(M, \infty)}{F} L_{\lambda}^{-1} \rVert _{B(X_{\sTF})}
            +
            \lVert \proj{(N,\infty)}{T} L_{\lambda}^{-1} \rVert _{B(X_{\sTF})}
            \right)
            \lVert Df(\bar{w})\rVert_{B(X_{\sTF})} .
        \end{aligned}
    \end{equation*}
    %
    The first term is already a computable expression. To deal with the second term we use the following bounds:
    \begin{equation}
        \label{eq:manifold_tail_bound}
        \lVert  \proj{[2,N]}{T}  \proj{(M,\infty)}{F} L_{\lambda}^{-1} \rVert_{B(X_{\sTF})} \leq
        \frac{1}{\sqrt{4{\lambda}^2 + (M+1)^2}},
        \quad
        \lVert \proj{(N,\infty)}{T} L_{{\lambda}}^{-1} \rVert_{B(X_{\sTF})} \leq \frac{1}{|{\lambda}(N+1)|}.
    \end{equation}
    Therefore, a computable upper bound for $\lVert  I - A D F (\bar{w})  \rVert_{B(X_{\sTF})}$ is given by \eqref{eq:manifold_Z1}.
\end{proof}

\begin{lemma} \label{lemma:manifold_Z2}
    A computable $Z_2$ such that   $\lVert A(DF(w) - DF(\bar{w})) \rVert_{B(X_{\sTF})} \leq Z_2 r$  for all $x$ in $B(\bar{w}_{\sTF}, r)$ is given by
    \begin{equation} \label{eq:manifold_Z2}
        Z_2(\bar{w}, r) \bydef
        3 |c| \left(
        \lVert A_{f} \rVert_{B(X_{\sTF})}
        +
        \frac{1}{\sqrt{4{\lambda}^2 + (M+1)^2}}
        +
        \frac{1}{|{\lambda}(N+1)|}
        \right)
        \left( 2\lVert \bar{w}_{\sTF}^{(1)}\rVert_{\sTF}  + r \right).
    \end{equation}
\end{lemma}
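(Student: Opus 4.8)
The plan is to exploit that the map $F$ from \eqref{eq:manifold_F} depends on $w=(w^{(1)},w^{(2)})$ only through the cubic term of $f$, so that the Lipschitz variation of $DF$ is governed by a single convolution difference. First I would record the Fréchet derivative $DF(w)=\proj{[0,1]}{T}+\proj{[2,\infty)}{T}\left[L_{\lambda}-Df(w)\right]$ and observe that neither $B$ (affine in $w$), nor $L_{\lambda}$ (linear), nor the coefficient sequence $w^{(3)}$ (fixed by \eqref{eq:manifold_3_4}) contributes to $DF(w)-DF(\bar{w})$; only the term $3c\left(w^{(1)}*_{\sTF}w^{(1)}*_{\sTF}\,\cdot\,\right)$ of $Df$ does. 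Hence, for any direction $h=(h^{(1)},h^{(2)})$,
\[
[DF(w)-DF(\bar{w})]h = -\,\proj{[2,\infty)}{T}\left(0,\ 3c\,\big((w^{(1)}*_{\sTF}w^{(1)}-\bar{w}^{(1)}*_{\sTF}\bar{w}^{(1)})*_{\sTF}h^{(1)}\big)\right).
\]

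Next, writing $w=\bar{w}+\delta$ with $\lVert\delta\rVert_{X_{\sTF}}\le r$, I would expand $w^{(1)}*_{\sTF}w^{(1)}-\bar{w}^{(1)}*_{\sTF}\bar{w}^{(1)}=2\,\bar{w}^{(1)}*_{\sTF}\delta^{(1)}+\delta^{(1)}*_{\sTF}\delta^{(1)}$ and apply the Banach algebra property of $*_{\sTF}$ (the Taylor--Fourier analogue of \eqref{eq:convolution_property}) together with $\lVert\delta^{(1)}\rVert_{\sTF}\le r$ to get $\lVert w^{(1)}*_{\sTF}w^{(1)}-\bar{w}^{(1)}*_{\sTF}\bar{w}^{(1)}\rVert_{\sTF}\le\left(2\lVert\bar{w}^{(1)}\rVert_{\sTF}+r\right)r$. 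Since the first component of the bracket above vanishes and $\lVert\proj{[2,\infty)}{T}\rVert_{B(X_{\sTF})}\le 1$, this yields $\lVert DF(w)-DF(\bar{w})\rVert_{B(X_{\sTF})}\le 3|c|\left(2\lVert\bar{w}^{(1)}\rVert_{\sTF}+r\right)r$. The remaining point is to bound $\lVert A\,\proj{[2,\infty)}{T}\rVert_{B(X_{\sTF})}$: because $\proj{[0,1]}{T}\proj{[2,\infty)}{T}=0$, the block $\proj{[0,1]}{T}\proj{(M,\infty)}{F}$ of $A_{\infty}$ in \eqref{eq:manifold_A} is annihilated, leaving only $A_f$ and the two tails $\proj{[2,N]}{T}\proj{(M,\infty)}{F}L_{\lambda}^{-1}$ and $\proj{(N,\infty)}{T}L_{\lambda}^{-1}$, which by the triangle inequality and \eqref{eq:manifold_tail_bound} give $\lVert A\,\proj{[2,\infty)}{T}\rVert_{B(X_{\sTF})}\le\lVert A_f\rVert_{B(X_{\sTF})}+\tfrac{1}{\sqrt{4\lambda^2+(M+1)^2}}+\tfrac{1}{|\lambda(N+1)|}$. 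Multiplying the two estimates and factoring out $r$ produces exactly the claimed $Z_2(\bar{w},r)$ in \eqref{eq:manifold_Z2}.

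Finally I would note computability: $\lVert A_f\rVert_{B(X_{\sTF})}$ is the weighted norm of an explicit finite matrix, $\lVert\bar{w}^{(1)}\rVert_{\sTF}$ and the convolution norms are finite sums, and the dependence on $\lambda$ — known only within the radius-$r_{\sF}$ interval around $\bar{\lambda}$ furnished by Theorem~\ref{Theorem:Bundle} — is handled by interval arithmetic exactly as in the earlier lemmas. I expect the only delicate step to be the operator-truncation bookkeeping in the composition $A\,\proj{[2,\infty)}{T}$, namely checking that the three blocks of $A_{\infty}$ collapse to precisely the two tail contributions appearing in $Z_2$ (and in particular that the $\proj{[0,1]}{T}$ block disappears); the rest is a direct application of the Banach algebra inequality and carries no real difficulty.
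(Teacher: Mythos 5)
Your proposal is correct and follows essentially the same route as the paper: isolate the cubic term in $Df$ as the sole source of variation, expand $w^{(1)}*_{\sTF}w^{(1)}-\bar{w}^{(1)}*_{\sTF}\bar{w}^{(1)}=2\,\bar{w}^{(1)}*_{\sTF}\delta^{(1)}+\delta^{(1)}*_{\sTF}\delta^{(1)}$, apply the Banach algebra estimate, and then control the action of $A$ via $\lVert A_f\rVert$ plus the two tail bounds in \eqref{eq:manifold_tail_bound}. The only cosmetic difference is that you factor the estimate as $\lVert A\,\proj{[2,\infty)}{T}\rVert\cdot\lVert DF(w)-DF(\bar{w})\rVert$ and make explicit the annihilation of the $\proj{[0,1]}{T}\proj{(M,\infty)}{F}$ block of $A_\infty$, whereas the paper applies $A$ directly to $z=[DF(w)-DF(\bar{w})]h$ and uses the same cancellation implicitly (since $z$ is supported in Taylor index $\ge2$); both yield the identical bound.
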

\begin{proof}
    Let $w$ be and element in  $B(\bar{w}_{\sTF}, r)$. Hence, there exists a $ \mathbf{\delta} \in X_{\sTF}  $ such that $ w =\bar{w} + \delta$ with  $\lVert \mathbf{\delta}  \lVert_{X_{\sTF}}  \leq r $. Let $h$ in $X_{\sTF}$ be such that $\lVert h  \lVert_{X_{\sTF}}  \leq 1 $ and let $z \bydef [ DF(\bar{w} + \mathbf{\delta}) - DF(\bar{w})] h $. Since
    \[
        [Df(\bar{w}) - Df(\bar{w} + \mathbf{\delta})]h =
        \begin{pmatrix}
            0                                                                                                                    \\
            -3c\left(2\bar{w}^{(1)}*_{\sTF}\delta^{(1)}*_{\sTF}h^{(1)} + \delta^{(1)}*_{\sTF}\delta^{(1)}*_{\sTF}h^{(1)} \right) \\
        \end{pmatrix},
    \]
    we have
    \[
        \lVert  z \rVert_{X_{\sTF}} =
        \lVert  \proj{[2,\infty)}{T} \left[ Df(\bar{w}) - Df(\bar{w} + \mathbf{\delta}) \right]h \rVert_{X_{\sTF}} \leq r \left( 6|c|\lVert \bar{w}_{\sTF}^{(1)}\rVert_{\sTF}  + 3|c|r \right).
    \]
    And thus
    \begin{align}
        \nonumber
        \lVert Az  \rVert_{X_{\sTF}}
         & \leq
        \lVert A_{f} z \rVert_{X_{\sTF}}
        +
        \lVert \proj{[2,N]}{T}  \proj{(M,\infty)}{ F}L_{\lambda}^{-1} z\rVert_{X_{\sTF}}
        +
        \lVert \proj{(N,\infty)}{T}  L_{\lambda}^{-1}  z\rVert_{X_{\sTF}}
        .
    \end{align}
    Observe that in this case the bound for $Z_2$ depends on the radius $r$ of the ball.
    Therefore, using the bounds \eqref{eq:manifold_tail_bound} a computable bound  $Z_2(\bar{w}, r)$ is given by \eqref{eq:manifold_Z2}.
\end{proof}
Using the computable bounds established above, we can formulate a validation theorem for an approximate parameterization of the stable manifold associated with the periodic orbit \( \gamma \).
%

\begin{theorem}
    \label{Theorem:Manifold}
    Fix parameters \(a\), \(b\), and \(c\), a weight \(\nu\) for the norm in \eqref{eq:S_F}. We consider a solution of the bundle sequence equation \eqref{eq:bundle_coeff} of the form $x_{\sF} \bydef (\lambda, v^{(1)}, v^{(2)}, 0, 0)$ in the space $\mathbb{C} \times S^{4}_{\sF} $ where $ (\lambda, v^{(1)}, v^{(2)}  )$ is in the ball $B_{X_{\sF}}(\bar{x}_{\sF}, r_{\sF})$. Suppose we have   $\bar{w}$ in $X_{\sTF}$ such that
    \begin{equation}
        \label{eq:initial_guess_manifold}
        w  = \proj{[0,N]}{T} \proj{[0,M]}{F} \bar{w}
    \end{equation}
    for fixed Taylor and Fourier truncation modes given by $N$ and $M$, respectively. Additionally, assume that the matrix \(\bar{A}(\bar{x}_{\sF})\) from operator \eqref{eq:manifold_A} is computed as a numerical inverse of
    \[
        \proj{[0,N]}{T}   \proj{[0,M]}{F}
        DF(\bar{w})\proj{[0,N]}{T}   \proj{[0,M]}{F}.
    \]
    Suppose the computable bounds \(Y(\bar{w}, r_{\sF})\), \(Z_1(\bar{w}, \bar{x}_{\sF})\), and \(Z_2(\bar{w}, r_{\sTF}^{*})\) defined in \eqref{eq:manifold_Y}, \eqref{eq:manifold_Z1}, and \eqref{eq:manifold_Z2} satisfy
    \[
        Z_{1}(\bar{w}, \bar{x}_{\sF}) < 1 \quad \text{and} \quad Z_2(\bar{w}, r_{\sTF}^{*})  < \frac{(1 - Z_{1}(\bar{w}, \bar{x}_{\sF}))^{2}}{2Y(\bar{w}, r_{\sF})}
    \]
    for some $r_{\sTF}^{*}$. Then, the validation map \eqref{eq:manifold_F} has a unique zero $w=(w^{(1)}, w^{(2)} )$ in the ball $B_{X_{\sTF}}( \bar{w} , r_{\sTF})$ with radius
    \[
        r_{\sTF} \bydef \frac{1 - Z_{1}(\bar{w}, \bar{x}_{\sF})- \sqrt{ (1- Z_1(\bar{w}, r_{\sTF}^{*}))^2 - 2Y(\bar{w}, r_{\sF})
                Z_2(\bar{w}, r_{\sTF}^{*})}}{Z_2(\bar{w}, r_{\sTF}^{*})}
    \]
    such that $F(w)= 0$.
    Furthermore, assume that the coefficients of \(\bar{w}\) satisfy the symmetry condition
    \[
        \bar{w}_{n,k}^{(i)} = [\bar{w}_{n,-k}^{(i)}]^{*},  \quad k \in \Z.
    \]
    Then, there exists a parameterization of the stable manifold $W: [-1,1] \times [0,2\pi] \to \mathbb{R}^{4}$ attached to the periodic orbit $\gamma$ such that
    \begin{equation}
        \label{eq:parameterization_bound}
        |W^{(i)}(\theta, \sigma) - \bar{W}^{(i)}(\theta, \sigma)    \bydef
        \sum_{n = 0}^{N}\sum_{m = -M }^{M}
        \bar{w}_{n,m }^{(i)}
        e^{im\theta} \sigma^{n}| \leq  r_{\sTF}
        \quad
        \text{for}
        \quad
        i=1,2.
    \end{equation}
\end{theorem}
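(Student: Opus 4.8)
The plan is to apply the Newton-Kantorovich Theorem~\ref{Theorem:Newton} to the validation map $F$ of \eqref{eq:manifold_F} on $X=Y=X_{\sTF}$, with approximate zero $\bar w$, operator $A$ from \eqref{eq:manifold_A}, and $r^{*}=r_{\sTF}^{*}$. First I would check the standing hypothesis that $A$ is injective: on the finite block $\proj{[0,N]}{T}\proj{[0,M]}{F}$ the operator $A$ acts as the matrix $\bar A(\bar w)$, invertible since it is computed as a numerical inverse, while on the complementary infinite tail it acts as the identity on the Taylor modes $n=0,1$ and as $L_{\lambda}^{-1}$ for $n\ge 2$, a diagonal operator with nonzero entries $im+n\lambda$ (using $\lambda\ne0$); hence $A$ is injective. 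Lemmas~\ref{lemma:manifold_Y}, \ref{lemma:manifold_Z1} and \ref{lemma:manifold_Z2} then supply the constants $Y=Y(\bar w,r_{\sF})$, $Z_1=Z_1(\bar w,\bar x_{\sF})$, $Z_2=Z_2(\bar w,r_{\sTF}^{*})$. In verifying \eqref{eq:Z2_bound_general} I would use that $Z_2(\bar w,\cdot)$ is nondecreasing, so that its value at $r_{\sTF}^{*}$ is a legitimate Lipschitz constant for $A\,DF$ on all of $B(\bar w,r_{\sTF}^{*})$, and that the genuinely unknown data in $F$, $DF$, namely $\lambda$ and the bundle sequence $v$, are enclosed by the rigorous balls of radius $r_{\sF}$ about $\bar\lambda$ and $\bar v$ furnished by Theorem~\ref{Theorem:Bundle}, interval arithmetic absorbing the remaining uncertainty. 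With the assumed inequalities $Z_1<1$ and $Z_2<\min\!\big((1-Z_1)^{2}/(2Y),\,r_{\sTF}^{*}\big)$, Theorem~\ref{Theorem:Newton} yields a unique zero $w=(w^{(1)},w^{(2)})$ of $F$ in $B_{X_{\sTF}}(\bar w,r_{\sTF})$, with $r_{\sTF}$ the left endpoint of the admissible interval, which is precisely the displayed formula.

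Next I would read off what $F(w)=0$ means via the block structure of \eqref{eq:manifold_F}. Vanishing of $\proj{[0,1]}{T}B(w,v)$ forces $w_{0,m}^{(i)}=\gamma_m^{(i)}$ and $w_{1,m}^{(i)}=v_m^{(i)}$ for $i=1,2$, which combined with the prescribed values \eqref{eq:manifold_3_4} for $i=3,4$ is exactly the first-order constraint \eqref{eq:first_order_constraints}, i.e.\ $W_0=\gamma$ and $W_1=v$. Vanishing of $\proj{[2,\infty)}{T}\big[L_{\lambda}w-f(w)\big]$ reproduces the recursion \eqref{eq:manifold_coeff} for every $n\ge2$ (the third and fourth components being identically zero, as established after \eqref{eq:manifold_coeff}). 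Since $w\in S_{\sTF}^{2}$ and $\nu\ge1$, for $(\theta,\sigma)\in S^{1}\times[-1,1]$ the Taylor-Fourier series $W(\theta,\sigma)=\sum_{n\ge0}\sum_{m\in\Z}w_{n,m}e^{im\theta}\sigma^{n}$ obeys $\sum_{n,m}|w_{n,m}^{(i)}|\,|e^{im\theta}|\,|\sigma|^{n}\le\sum_{n,m}|w_{n,m}^{(i)}|\nu^{|m|}=\lVert w^{(i)}\rVert_{\sTF}<\infty$, so it converges absolutely and uniformly; the arguments of \cite{Castelli2015} then promote the coefficient identities to a bona fide solution of \eqref{eq:manifold_PDE}--\eqref{eq:first_order_constraints}, and the conjugacy relation \eqref{eq:conjugacy_manifold} (Theorem~2.6 of \cite{Castelli2015}) identifies the image of $W$ with $W_{loc}^{s}(\gamma)$, using $\lambda<0$.

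To see that $W$ is real-valued I would reuse the conjugation argument from the proof of Theorem~\ref{Theorem:Bundle}. Define $C^{*}$ on $X_{\sTF}$ by $[C^{*}w]_{n,m}^{(i)}=[w_{n,-m}^{(i)}]^{*}$; since the fixed bundle datum $v$ has real exponent and satisfies $v_k^{(i)}=[v_{-k}^{(i)}]^{*}$ (Theorem~\ref{Theorem:Bundle}) and $\gamma_m^{(i)}=[\gamma_{-m}^{(i)}]^{*}$, one checks $C^{*}F(w')=F(C^{*}w')$ for all $w'\in X_{\sTF}$; the symmetry hypothesis on $\bar w$ gives $C^{*}\bar w=\bar w$, so $C^{*}w$ is again a zero of $F$ in $B_{X_{\sTF}}(\bar w,r_{\sTF})$ (as $\lVert C^{*}\rVert_{B(X_{\sTF})}\le1$), and uniqueness forces $C^{*}w=w$, whence each $W_n(\theta)=\sum_m w_{n,m}e^{im\theta}$ is real. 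Finally, for $i=1,2$ and $(\theta,\sigma)\in[0,2\pi]\times[-1,1]$,
\[
\bigl|W^{(i)}(\theta,\sigma)-\bar{W}^{(i)}(\theta,\sigma)\bigr|\;\le\;\sum_{n\ge0}\sum_{m\in\Z}\bigl|w_{n,m}^{(i)}-\bar{w}_{n,m}^{(i)}\bigr|\,|\sigma|^{n}\nu^{|m|}\;=\;\lVert w^{(i)}-\bar{w}^{(i)}\rVert_{\sTF}\;\le\;\lVert w-\bar{w}\rVert_{X_{\sTF}}\;\le\;r_{\sTF},
\]
which is \eqref{eq:parameterization_bound}. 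I expect the first paragraph to be the main obstacle: checking injectivity of $A$ and, more delicately, reconciling the $r$-dependence of $Z_2$ with the output radius $r_{\sTF}$ so that every hypothesis of Theorem~\ref{Theorem:Newton} is genuinely met for some computable $r_{\sTF}^{*}$; the remaining steps are essentially bookkeeping on the sequence equations plus a transplant of the earlier symmetry argument.
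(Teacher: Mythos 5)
Your proposal is correct and follows essentially the same route as the paper: invoke Theorem~\ref{Theorem:Newton} with the three lemmas, interpret $F(w)=0$ as the first-order constraints plus the recursion \eqref{eq:manifold_coeff}, transplant the conjugation argument from Theorem~\ref{Theorem:Bundle} to establish reality, and bound $|W^{(i)}-\bar W^{(i)}|$ by $\lVert w^{(i)}-\bar w^{(i)}\rVert_{\sTF}\le r_{\sTF}$ using $\nu\ge1$ and $|\sigma|\le1$. You add welcome detail that the paper leaves implicit, in particular the injectivity of $A$, the monotonicity of $Z_2(\bar w,\cdot)$ needed to reconcile the $r$-dependent Lipschitz constant with the output radius, the explicit check that $C^{*}$ commutes with $F$ because $v$ and $\gamma$ themselves satisfy the conjugate symmetry, and the absolute-uniform convergence of the Taylor--Fourier series together with the appeal to the conjugacy relation from \cite{Castelli2015} to pass from coefficient identities to a genuine local stable-manifold parameterization; none of this departs from the paper's argument, it merely makes it airtight.
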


\begin{proof}
    The bounds $Y(\bar{w}, r_{\sF})$, $Z_{1}(\bar{w}, \bar{x}_{\sF})$ and $Z_2(\bar{w},r_{\sTF}^{*})$ given by equations \eqref{eq:manifold_Y}, \eqref{eq:manifold_Z1} and \eqref{eq:manifold_Z2} satisfy the hypothesis of Theorem \ref{Theorem:Newton} for some $r_{\sTF}^{*}$. There exists a unique zero $w=(w^{(1)}, w^{(2)})$  of the validation map \eqref{eq:manifold_F} in the ball $B_{X_{\sTF}}(\bar{w}, r_{\sTF})$ for some radius $r_{\sTF} \in \R$.
    Then a solution to the sequence equation \eqref{eq:manifold_coeff} associated to the PDE problem \eqref{eq:manifold_PDE} with parameters $a$, $b$ and $c$ is $x_{\sTF} \bydef (w^{(1)}, w^{(2)}, w^{(3)}, w^{(4)}  ) \in   S^{4}_{\sTF} $. Moreover, since the Fourier expansions of the approximate solution $\bar{w}$ satisfy the symmetry $\bar{w}_{n,k}^{(i)} = [\bar{w}_{n,-k}^{(i)}]^{*}$, $k \in \Z$, analogously to the proof of Theorem~\ref{Theorem:Bundle} we can conclude that the functions $W_{n}^{(i)}$ are real valued  for $i=1,2$, $n\in\N$. The resulting parameterization for the stable manifold $W$ attached to the periodic orbit $\gamma$ is given by
    \[
        W(\theta, \sigma) \bydef
        \sum_{n = 0}^{\infty}
        \sum_{m \in \mathbb{Z}}
        w_{n,m } e^{im\theta} \sigma^{n}
    \]
    where the third and fourth components are given by \eqref{eq:manifold_3_4}.
    Finally, since $\nu \ge 1$ and $|\sigma|\leq1$ for $i = 1,2 $ we have that
    \begin{equation}
        \begin{aligned}
            |W^{(i)}(\theta, \sigma) - \bar{W}^{(i)}(\theta, \sigma)| & \leq
            \lVert \bar{w}^{(i)} - w^{(i)} \rVert_{{\sTF}}
            \leq
            r_{\sTF}
            . \qedhere
        \end{aligned}
    \end{equation}
\end{proof}
We can only guarantee that the output of a parameterization coming from Theorem \ref{Theorem:Manifold} is included in a ball of some radius. However, this fact does not represent a computational limitation. We are now ready to solve the boundary-value problem \eqref{eq:BVP_soliton}.

\section{Solving the Boundary-Value Problem} \label{sec:BVP}


Having detailed in the previous section the computation of a parameterization \( W \) of the stable manifold associated with the periodic orbit $\gamma$, we can now reformulate the boundary-value problem \eqref{eq:BVP_soliton} as follows

\begin{equation}
    \label{eq:BVP}
    \dot{u}
    =
    \kappa
    f(u),
    \quad
    \kappa \bydef \frac{\Lr}{2},
\end{equation}
\begin{equation}
    \nonumber
    u_{2}(-1) = 0,
    \quad
    u_{3}(-1) = 1,
    \quad
    u_{4}(-1) = 0,
    \quad
    u_{1}(1) = W_{1}(\theta, \sigma),
    \quad
    u_{2}(1) = W_{2}(\theta, \sigma).
\end{equation}
We have 5 boundary conditions for the 4 components of the solution. To balance the system we will solve for $\sigma$ and fix the value of $\theta$ and $\Lr$.
We now consider the following Chebyshev expansion for the solution of the BVP \eqref{eq:BVP}
\begin{equation}
\label{eq:sol_expansion}
    s^{(i)}(t) = s^{(i)}_0 +  2 \sum_{m\geq1} s^{(i)}_m T_m(t), \quad i = 1,2,3,4,
\end{equation}
where $T_m:[-1,1] \to \R $ with $m \ge 0$ are the Chebyshev polynomials of the first kind. These polynomials are defined recursively as follows:
\[
T_0(t) = 1, \quad T_1(t) = t, \quad T_{m+1}(t) = 2t T_m(t) - T_{m-1}(t) \quad \text{for } m \geq 1.
\]
Given a weight $\omega \ge 1$ we define the sequence space $S_{\sC}$ of Chebyshev coefficients as
\begin{equation} \
    S_{\sC}\bydef \left\{ s = (s_m)_{m \ge 0}, s_m \in \R : \|s\|_{\sC} \bydef |s_0| + 2 \sum_{m \ge 1} |s_m| \omega^m < \infty \right\},
    \label{eq:S_C}
\end{equation}
and define the Banach space $X_{\sC}$ as
\begin{equation}
    X_{\sC} \bydef \R \times S_{{\sC}}^4,
    \quad
    \| (\sigma,s) \|_{X_{\sC}} \bydef \max \left\{ |\sigma|,\|s^{(1)}\|_{\sC},\|s^{(2)}\|_{\sC},\|s^{(3)}\|_{\sC},\|s^{(4)}\|_{\sC} \right\}.
\end{equation}
To easily make reference to the different components of the space $X_{\sC}$ we use truncation operators. For any $p \in S_{\sC}$ and a set of indices $R \subset \N$ we define the projection operator as
\[
    \left[ \proj{R}{C}  p \right]_{m}\bydef \begin{cases}
        p_m, & m\in R            \\
        0,   & \text{otherwise},
    \end{cases}
    \quad
    \proj{R}{C} (\sigma,p) \bydef (0,  \proj{R}{C} p),
    \quad \text{and} \quad
    \proj{}{\mathbb{R}} (\sigma,p) \bydef (\sigma,0).
\]

The action of a truncation operators in a element $(\sigma,s)$ in the space $X_{\sC} $ is given by
\[
    \proj{R}{C} (\sigma,s) \bydef (0,  \proj{R}{C} s),
    \quad
    \proj{}{\mathbb{R}} (\sigma,s) \bydef (\sigma,0)
    \quad \text{and} \quad
    \proj{R}{C} s \bydef
    \left(
    \proj{R}{C} s^{(1)},
    \proj{R}{C} s^{(2)},
    \proj{R}{C} s^{(3)},
    \proj{R}{C} s^{(4)}
    \right).
\]
%
For any two sequences $u,v \in S_{\sC}$, we define their discrete convolution $*_{\sC} :S_{\sC} \times S_{\sC} \to S_{\sC}$ by
\begin{equation} \label{eq:discrete_convolution_cheb}
    (u *_{\sC} v)_m \bydef \sum_{m_1+m_2=m \atop m_1,m_2 \in \Z} u_{|m_1|}  v_{|m_2|}.
\end{equation}

For any sequence $s \in S_{\sC}$ we will use the following notation when we refer to the sequence as a function
\[
    s(t) \bydef  s_0 +  2 \sum_{m\geq1} s_m T_m(t).
\]
Since Chebyshev polynomials satisfy the identity \( T_k(\cos \theta) = \cos(k\theta) \), we have that $T_{k}(-1)=(-1)^{k}$ and $T_{k}(1)=1$ for all $k$. Therefore, the evaluations at $-1$ and $1$  of a sequence are given by
\[
    s(-1) =  s_{0} + 2\sum_{m \geq 1 } (-1)^m s_{m}, \quad s(1) =  s_{0} + 2\sum_{m \geq 1 } s_{m}.
\]

We proceed with the definition of the validation map for the soliton boundary-value problem. We employ the same notation as in previous sections to highlight the structural parallels in our approach. Moreover, as derived in Section~\ref{sec:manifold}, let $W: [-1,1] \times [0,2\pi] \to \mathbb{R}^{4}$ be a parameterization of the local stable manifold $W^s_{\rm loc}(\gamma)$ associated with the periodic orbit $\gamma$. 

In this case, the sequence equation follows the derivation in \cite{Lessard2014}. For completeness, we briefly outline the argument for the first component of \( s \), which satisfies \( s^{(1)}(1) = W^{(1)}(\theta, \sigma) \). We begin by integrating the differential equation \( \dot{s}(t) = f(s(t)) \) from \( t \) to \( 1 \):
\begin{equation}
\label{eq:integral_identity}
s^{(1)}(1) - s^{(1)}(t) = \kappa \int_{t}^{1} f^{(1)}(s(u)) \, du.
\end{equation}
We then substitute in \eqref{eq:integral_identity} the Chebyshev expansion of the solution, given in equation \eqref{eq:sol_expansion}, along with the Chebyshev expansion of the vector field \( f \):
\[
f^{(i)}(s(t)) = \phi^{(i)}_0 + 2 \sum_{m \geq 1} \phi^{(i)}_m T_m(t), \quad i = 1, 2, 3, 4.
\]
To evaluate the integral in \eqref{eq:integral_identity}, we use the recurrence relation for integrals of Chebyshev polynomials:
\[
\int T_m(t)\,dt = \frac{1}{2} \left( \frac{T_{m+1}(t)}{m+1} - \frac{T_{m-1}(t)}{m-1} \right), \quad \text{for } m \geq 2.
\]
Matching Chebyshev coefficients on both sides yields a system of equations for the unknown coefficients \( a_m \). This leads to the following equations for the Chebyshev coefficients:
\[
s_0 + 2 \sum_{m \geq 1} (-1)^m s_m - W^{(1)}(\theta, \sigma)=0,
\]
\[
2m\, s_m^{(1)} + \phi^{(1)}_{m+1} - \phi^{(1)}_{m-1} = 0, \quad \text{for } m \geq 1.
\]
Since the Chebyshev expansion of a sum or difference of two functions is obtained by adding or subtracting their Chebyshev coefficients, and the Chebyshev coefficients of a product are given by the discrete Chebyshev convolution~\eqref{eq:discrete_convolution_cheb}, any polynomial in the components of \( s \) can be written using sums and discrete convolutions of their Chebyshev coefficients. In particular, the Chebyshev coefficients of \( f(s) \) can be expressed directly in terms of those of \( s \).

Applying the same procedure to the remaining three components of \( s \), we obtain a complete sequence equation.  Further details on the derivation of the sequence equations can be found in \cite{Lessard2014}. This construction defines our validation map.
\begin{equation}
    \label{eq:soliton_F}
    F(x) \bydef \begin{pmatrix} 0 \\ L s + \kappa \ T \fop (s)
    \end{pmatrix}
    + B(x)
\end{equation}
where the operators $L$, $T$, $f$ and $B$ are defined below.
The operator $f : S_{\sC}^4 \to S_{\sC}^4$ is defined as
\[
    f (s) \bydef
    \begin{pmatrix}
        s^{(2)}                                                                                               \\
        -as^{(1)} +  b\left(s^{(3)}*_{\sC}s^{(1)}\right) +  c\left(s^{(1)}*_{\sC}s^{(1)}*_{\sC}s^{(1)}\right) \\
        s^{(4)}                                                                                               \\
        -4s^{(3)}
    \end{pmatrix}
\]
while $L:S_{\sC}^4 \to S_{\sC}^4$ is the linear operator defined as follows
\[
    \left[L s\right]_{m}^{(i)} \bydef
    \begin{cases}
        0,            & m = 0             \\
        2m s_m^{(i)}, & \text{otherwise},
    \end{cases}
    \quad
    \left[L^{-1} s\right]_{m}^{(i)}  \bydef
    \begin{cases}
        0,                      & m = 0            \\
        \frac{1}{2m} s_m^{(i)}, & \text{otherwise}
    \end{cases}
    \quad
    i = 1,2,3,4.
\]
We also define the tridiagonal sequence operator $T: S_{\sC}^4 \to S_{\sC}^4$ that comes from the sequence expansion in Chebyshev series
\[
    \left[T  s\right]_{m}^{(i)} \bydef
    \begin{cases}
        0,                & m = 0             \\
        -s_{m-1}+s_{m+1}, & \text{otherwise}.
    \end{cases}
\]
Finally, we define the sequence operator $B :X_{\sC} \to X_{\sC}$ that includes the boundary conditions in the zero-finding problem. Each sequence has only the zero coefficient different than zero. While the  parameter space component includes the remaining boundary condition
\begin{align*}
    \proj{[0,\infty)}{C} B(x) = \proj{\{0\}}{C} B(x) & \bydef
    \left(
    s^{(1)}(1) - W^{(1)}(\theta,\sigma) ,
    s^{(2)}(1) - W^{(2)}(\theta,\sigma),
    s^{(3)}(-1)  - 1    ,
    s^{(4)}(-1)
    \right),
    \\
    \proj{}{R} B(x)                                  & \bydef (s^{(2)}(-1),0).
\end{align*}
We now consider $\bar{x}_{\sC} = (\bar{\sigma}, \bar{s})$  an element in $X_{\sC}$ supported in $[0,M)$ and such that $-1<\bar{\sigma}<1$.
We define an approximate inverse derivative $A: X_{\sC}  \to X_{\sC} $ as follows
\[
    A \bydef A_{f} + A_{\infty},
    \quad
    A_{f} \bydef
    \left( \proj{}{\mathbb{C}} + \proj{[0, M]}{C} \right)
    \bar{A}(\bar{x}_{\sC}) \left( \proj{}{\mathbb{C}} + \proj{[0, M]}{C} \right)
    ,
    \quad
    A_{\infty} \bydef
    \begin{pmatrix} 0 & 0 \\ 0 &
                L^{-1} \proj{(M,\infty)}{C}
    \end{pmatrix},
\]
where \( A_f \) can be represented as a \( [4(M+1)+1] \times [4(M + 1)+1] \) matrix that in practice corresponds to a numerical inverse of the truncated derivative of $F$ evaluated at $\bar{x}_{\sC}$. We suppose that for a fixed set of parameters $a$, $b$ and $c$ there exists $\bar{x}_{\sTF}  = (\bar{w}^{(1)}, \bar{w}^{(2)}, \bar{w}^{(3)}, \bar{w}^{(4)}  ) \in   S^{4}_{\sTF} $  with finite support such that there exists a parameterization of the stable manifold $W: [-1,1] \times [0,2\pi] \to \mathbb{R}^{4}$ attached to the periodic orbit $\gamma$ satisfying the inequality in equation \eqref{eq:parameterization_bound}.
This is the parameterization given by Theorem~\ref{Theorem:Manifold}. We are now ready to provide computable bounds as those required by Theorem \ref{Theorem:Newton}.
\begin{lemma} \label{lemma:soliton_Y}
    A computable upper bound for $\lVert A F (\bar{x}_{\sC}) \rVert_{X_{\sC}}$ is given by
    \begin{align} \label{eq:soliton_Y}
        Y(\bar{x}_{\sC}, r_{\sTF}) & \bydef \left\|
        A_{f} \begin{pmatrix} \bar{s}^{(2)}(-1) \\ L \bar{s} + \kappa \ T \fop (\bar{s}) \end{pmatrix} \right\|_{X_{\sC}} + \lVert L^{-1} \proj{(M,3M+1]}{C} F(\bar{x}_{\sC})\rVert_{X_{\sC}}
        \\ & +
        \lVert A_{f} \rVert_{B(X_{\sC})} \max \left\{ \right.  |\bar{s}^{(1)}(1)-\bar{W}^{(1)}(\theta, \bar{\sigma}) | +  r_{\sTF} ,
        |\bar{s}^{(2)}(1)-\bar{W}^{(2)}(\theta, \bar{\sigma}) | +  r_{\sTF} ,
        |\bar{s}^{(3)}(-1)-1|  , |\bar{s}^{(4)}(-1)| \left. \right\}.
        \nonumber
    \end{align}
\end{lemma}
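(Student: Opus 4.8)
The plan is to bound $\|AF(\bar{x}_{\sC})\|_{X_{\sC}}$ by splitting it along $A = A_f + A_\infty$ and, within each piece, isolating the part of $F(\bar{x}_{\sC})$ that is exactly evaluable from the part that depends on the exact parameterization $W$, which is only known up to the error $r_{\sTF}$ supplied by Theorem~\ref{Theorem:Manifold}. The starting point is an algebraic observation about the support of $F(\bar{x}_{\sC})$: since $L$ annihilates the zeroth Chebyshev mode and $[T s]_0 = 0$ for every sequence $s$, the sequence part of $(0,\ Ls + \kappa\, T\fop(s))$ vanishes at mode $0$, so the mode-$0$ coefficient of the sequence part of $F(\bar{x}_{\sC})$ is exactly the boundary residual produced by $B$. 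This gives the decomposition
\[
    F(\bar{x}_{\sC}) = \begin{pmatrix} \bar{s}^{(2)}(-1) \\ L\bar{s} + \kappa\, T\fop(\bar{s}) \end{pmatrix} + \begin{pmatrix} 0 \\ \beta \end{pmatrix},
\]
where $\beta \in S_{\sC}^4$ is supported at Chebyshev mode $0$ with value $\bigl(\bar{s}^{(1)}(1) - W^{(1)}(\theta,\bar{\sigma}),\ \bar{s}^{(2)}(1) - W^{(2)}(\theta,\bar{\sigma}),\ \bar{s}^{(3)}(-1) - 1,\ \bar{s}^{(4)}(-1)\bigr)$. Call the first summand $F_{\mathrm{cmp}}$; it is a computable finite Chebyshev sequence, because $\bar{s}$ has finite support, the cubic term in $\fop$ enlarges that support, and the tridiagonal operator $T$ enlarges it by one more mode, so that the support of $F(\bar{x}_{\sC})$ lies in the index range $[0,3M+1]$ that appears in \eqref{eq:soliton_Y}; its coefficients are evaluated rigorously in interval arithmetic, the unknown $\kappa$ (and the parameters $a,b,c$) being represented by intervals.

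I would then dispose of the $A_\infty$ contribution. Since $A_\infty$ kills the $\mathbb{R}$-component and acts as $L^{-1}\proj{(M,\infty)}{C}$ on the sequence component, and since the support of $F(\bar{x}_{\sC})$ is contained in $[0,3M+1]$, we obtain $A_\infty F(\bar{x}_{\sC}) = L^{-1}\proj{(M,3M+1]}{C} F(\bar{x}_{\sC})$, a \emph{finite} computable sequence whose $X_{\sC}$-norm is precisely the second term of \eqref{eq:soliton_Y}. Observe that $\beta$ sits at mode $0 \le M$, so it is annihilated here; in particular, $A_\infty$ never sees $W$.

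For the $A_f$ contribution I would split $A_f F(\bar{x}_{\sC}) = A_f F_{\mathrm{cmp}} + A_f (0,\beta)$. The first term is a fixed finite matrix — whose right factor already truncates to modes $\le M$ — applied to the computable sequence $F_{\mathrm{cmp}}$, hence directly computable; this is the first term of \eqref{eq:soliton_Y}. For the second term I would use submultiplicativity, $\|A_f(0,\beta)\|_{X_{\sC}} \le \|A_f\|_{B(X_{\sC})}\,\|(0,\beta)\|_{X_{\sC}}$, where $\|A_f\|_{B(X_{\sC})}$ is computable as a weighted-$\ell^1$ operator norm just as in the previous sections, and where, since $\beta$ is supported at mode $0$, $\|(0,\beta)\|_{X_{\sC}}$ equals the maximum of the absolute values of its four entries. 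The third and fourth entries contain no $W$ and are evaluated directly; for the first and second entries I would insert $\pm\,\bar{W}^{(i)}(\theta,\bar{\sigma})$, apply the triangle inequality, and use $|W^{(i)}(\theta,\bar{\sigma}) - \bar{W}^{(i)}(\theta,\bar{\sigma})| \le r_{\sTF}$ from \eqref{eq:parameterization_bound}, which produces the $+\,r_{\sTF}$ shifts. Adding the three pieces yields the claimed bound \eqref{eq:soliton_Y}.

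The main obstacle here is careful bookkeeping rather than any hard estimate: one must track precisely how the cubic nonlinearity and the operator $T$ push the support of $F(\bar{x}_{\sC})$ out to index $3M+1$, so that the $A_\infty$ contribution genuinely collapses to a finite computation on $(M,3M+1]$, and one must make sure that every occurrence of the exact (non-computable) parameterization $W$ has been replaced by its finite approximant $\bar{W}$ at the cost of the single scalar $r_{\sTF}$, leaving all remaining quantities as finite expressions evaluable in interval arithmetic.
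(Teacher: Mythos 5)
Your proposal is correct and follows essentially the same route as the paper: split $A = A_f + A_\infty$, note that the computable part of $F(\bar{x}_{\sC})$ is supported in $[0,3M+1]$ (cubic nonlinearity to $[0,3M]$, then $T$ shifts by one) so that $A_\infty F(\bar{x}_{\sC}) = L^{-1}\proj{(M,3M+1]}{C}F(\bar{x}_{\sC})$, decompose $A_f F(\bar{x}_{\sC})$ into the computable piece plus $A_f(0,\beta)$ with $\beta$ the mode-$0$ boundary residual, and replace $W^{(i)}$ by $\bar{W}^{(i)}\pm r_{\sTF}$ via \eqref{eq:parameterization_bound}. The only cosmetic difference is that you state explicitly (which the paper leaves implicit) that $\beta$ lives at mode $0$ and is therefore annihilated by $\proj{(M,3M+1]}{C}$, so the $A_\infty$ term never touches $W$ and is genuinely computable.
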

\begin{proof}
    Since $f(\bar{s})$ includes a cubic product and operator $T$ augments the support by one, we know that $F(\bar{x}_{\sC})$ is supported in $[0,3M+1]$. Thus, using the triangle inequality we have that
    \begin{equation}
        \lVert  A F(\bar{x}_{\sC})\rVert_{X_{\sC}}
        \leq
        \lVert A_{f} F(\bar{x}_{\sC})\rVert_{X_{\sC}}
        +  \lVert L^{-1} \proj{(M,3M+1]}{C} F(\bar{x}_{\sC})\rVert_{X_{\sC}}.
    \end{equation}
    The second term is already a computable bound. The first term, however, involves the evaluation of \(B(\bar{x}_{\sC})\), which requires evaluating the parameterization \(W\). Since we only have interval control over \(W\), it is not directly a computable bound. Indeed,
    \[
        A_{f} F(\bar{x}_{\sC}) =   A_{f}
        \begin{pmatrix} \bar{s}^{(2)}(-1) \\ L \bar{s} + \kappa \ T \fop (\bar{s})
        \end{pmatrix}
        +
        A_{f}     \begin{pmatrix} 0 \\
            \proj{[0,\infty)}{C}B(\bar{x}_{\sC})
        \end{pmatrix} .
    \]
    Using \eqref{eq:parameterization_bound}, it follows that
    \begin{align*}
        \left\| \proj{[0,\infty)}{C}  B(\bar{x}_{\sC}) \right\|_{X_{\sTF}}
         & =
        \max_{i=1,2,3,4}  \left \{ \left\| [ \proj{\{0\}}{C} B(\bar{x}_{\sC})] ^{(i)}\right\|_{\sC}     \right\}
        \\ & =
        \max \left\{ \right.  |\bar{s}^{(1)}(1) - W^{(1)}(\theta, \bar{\sigma}) | ,
        |\bar{s}^{(2)}(1) - W^{(2)}(\theta, \bar{\sigma}) |  ,
        |\bar{s}^{(3)}(-1)- 1|,
        |\bar{s}^{(4)}(-1)| \left. \right\}
        \\ & \leq
        \max \left\{ \right.  |\bar{s}^{(1)}(1) - \bar{W}^{(1)}(\theta, \bar{\sigma}) | +  r_{\sTF} ,
        |\bar{s}^{(2)}(1) - \bar{W}^{(2)}(\theta, \bar{\sigma}) | +  r_{\sTF} ,
        |\bar{s}^{(3)}(-1)  - 1|,
        |\bar{s}^{(4)}(-1)| \left. \right\}.
    \end{align*}
    Therefore, we define the bound $Y$ as in equation \eqref{eq:soliton_Y}.
\end{proof}

We now provide a computable $Z_1$ bound
\begin{lemma} \label{lemma:soliton_Z1}
    A computable upper bound for $\lVert  I - A D F (\bar{x}_{\sC})\rVert_{B(X_{\sC})}$ is given by
    \begin{align} \label{eq:soliton_Z1}
        Z_{1}(\bar{x}_{\sC}, r_{\sTF}) & \bydef
        \lVert
        \proj{}{\mathbb{R}} \ + \proj{[0,M]}{C} - A_{f} D F (\bar{x}_{\sC})\left(\proj{}{\mathbb{R}} + \proj{[0,3M+1]}{C} \right)
        -
        A_{f}
        DB(\bar{x}_{\sC})  \proj{[0,3M+1]}{C} -
        A_{f}  D\bar{B}(\bar{x}_{\sC}) \proj{}{\mathbb{R}}
        \rVert_{B(X_{\sC})}
        \\& \quad+
        \frac{ r_{\sTF}  \lVert A_{f} \rVert_{B(X_{\sC})}}{(1- |\bar{\sigma}|)^2 }
        +
        \frac{\omega|\kappa|}{M}
        \max \left\{ 4, |a| + |b|\lVert \bar{s}^{(1)} \lVert_{\sC} + |b|\lVert \bar{s}^{(3)} \lVert_{\sC}  + 3|c|\lVert \bar{s}^{(1)}*\bar{s}^{(1)}  \lVert_{\sC} \right\}
        +
        \frac{ \lVert A_{f}
            \rVert_{B(X_{\sC})} }{\omega^{3M+2}} .
        \nonumber
    \end{align}
\end{lemma}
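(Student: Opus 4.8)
The plan is to reproduce, in the Chebyshev space $X_{\sC}$, the same finite/infinite splitting of $I-ADF$ already carried out in the proofs of Lemma~\ref{lemma:bundle_Z1} and Lemma~\ref{lemma:manifold_Z1}, now with the two new ingredients forced by this problem: the boundary operator $B$, whose linearization couples all Chebyshev modes through the evaluation functionals $s\mapsto s(\pm1)$, and the fact that the parameterization $W$ is only enclosed up to the radius $r_{\sTF}$ produced by Theorem~\ref{Theorem:Manifold}, so that $DB(\bar x_{\sC})$ is never exactly computable.

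First I would write $DF(\bar x_{\sC})=D_0+DB(\bar x_{\sC})$, where $D_0$ acts on the Chebyshev block by $s\mapsto Ls+\kappa\,T\,D\fop(\bar s)\,s$ (with $D\fop(\bar s)$ the linearization of the cubic vector field) and annihilates the $\sigma$-component, while $DB(\bar x_{\sC})$ sends a perturbation $(\delta\sigma,\delta s)$ to the five boundary terms $\delta s^{(1)}(1)-\partial_\sigma W^{(1)}(\theta,\bar\sigma)\,\delta\sigma$, $\delta s^{(2)}(1)-\partial_\sigma W^{(2)}(\theta,\bar\sigma)\,\delta\sigma$, $\delta s^{(3)}(-1)$, $\delta s^{(4)}(-1)$, placed in the zeroth Chebyshev coefficients, and $\delta s^{(2)}(-1)$, placed in the $\mathbb{R}$-component. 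The decomposition then rests on two structural facts. First, $D_0$ has finite bandwidth $3M+1$ because $\bar s$ is supported in $[0,M]$ (so $\bar s^{(1)}*_{\sC}\bar s^{(1)}$ in $[0,2M]$), $L$ is diagonal, and $T$ is tridiagonal; consequently the $\proj{[0,M]}{C}$ inside $A_f$ annihilates $D_0\proj{(3M+1,\infty)}{C}$, and $A_\infty D_0\proj{[0,3M+1]}{C}$ reproduces $\proj{(M,3M+1]}{C}$ on the diagonal $L$-part (cancelling the matching piece of $I$) and leaves only $L^{-1}\proj{(M,\infty)}{C}\kappa T D\fop(\bar s)$. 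Second, $DB(\bar x_{\sC})$ maps only into the zeroth coefficients and the parameter, with $\|DB(\bar x_{\sC})\proj{(K,\infty)}{C}\|_{B(X_{\sC})}\le\omega^{-(K+1)}$, since $|h(\pm1)|\le 2\sum_{m>K}|h_m|\le\omega^{-(K+1)}\|h\|_{\sC}$; in particular $A_\infty DB(\bar x_{\sC})=0$ because $\proj{(M,\infty)}{C}$ kills the zeroth coefficient.

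Splitting $I-ADF(\bar x_{\sC})$ along $\proj{}{\mathbb{R}}$, $\proj{[0,M]}{C}$, $\proj{(M,3M+1]}{C}$, $\proj{(3M+1,\infty)}{C}$ exactly as in \eqref{eq:Z1_splitting} and \eqref{eq:manifold_splitting} then produces the four terms of \eqref{eq:soliton_Z1}: the first is the genuine finite matrix norm obtained after replacing $W$ by the polynomial $\bar W$, which is computable since all operators involved have finite bandwidth once truncated to $[0,3M+1]$ and the evaluation functionals then become finite sums; the term $\tfrac{\omega|\kappa|}{M}\max\{4,|a|+|b|\|\bar s^{(1)}\|_{\sC}+|b|\|\bar s^{(3)}\|_{\sC}+3|c|\|\bar s^{(1)}*_{\sC}\bar s^{(1)}\|_{\sC}\}$ comes from $\|L^{-1}\proj{(M,\infty)}{C}\kappa T D\fop(\bar s)\|_{B(X_{\sC})}$, using $\|L^{-1}\proj{(M,\infty)}{C}T\|_{B(X_{\sC})}\le\omega/M$ together with the component-wise bound $\|D\fop(\bar s)\|_{B(X_{\sC})}\le\max\{4,\dots\}$ from the Banach algebra property of $*_{\sC}$; the term $\tfrac{\|A_f\|}{\omega^{3M+2}}$ is $\|A_f DB(\bar x_{\sC})\proj{(3M+1,\infty)}{C}\|$ from the second structural fact with $K=3M+1$; and the term $\tfrac{r_{\sTF}\|A_f\|}{(1-|\bar\sigma|)^2}$ accounts for replacing the true $W$ by $\bar W$ inside $DB$, which affects only the entries $\partial_\sigma W^{(i)}(\theta,\bar\sigma)$: writing $W-\bar W=\sum_{n\ge1}c_n(\theta)\sigma^n$ with $\sum_n\|c_n\|_{\sF}\le r_{\sTF}$ gives $|\partial_\sigma(W^{(i)}-\bar W^{(i)})(\theta,\bar\sigma)|\le\sum_{n\ge1}n|\bar\sigma|^{n-1}\|c_n\|_{\sF}\le(\sup_n\|c_n\|_{\sF})\,(1-|\bar\sigma|)^{-2}\le r_{\sTF}(1-|\bar\sigma|)^{-2}$, after which one applies $A_f$. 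Summing the four contributions yields \eqref{eq:soliton_Z1}.

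The hard part is not the truncation algebra, which is by now routine, but keeping the two genuinely new error sources cleanly separated: the functionals $s\mapsto s(\pm1)$ couple all Chebyshev modes and so must be controlled by the geometric tail bound $\omega^{-(3M+2)}$ rather than by the exact truncation identities available for convolutions, while the enclosure of $W$ forces every term in which $\partial_\sigma W$ appears to be split into a computable $\bar W$-part plus an $r_{\sTF}$-error through the $\sigma$-derivative of a geometric series — and one must verify that these two errors enter additively, with the correct exponent $3M+2$ of $\omega$ and the correct power $(1-|\bar\sigma|)^{-2}$, and that the bandwidth count $3M+1$ (cubic nonlinearity composed with the tridiagonal $T$) is precisely what makes $A_f D_0\proj{(3M+1,\infty)}{C}=0$.
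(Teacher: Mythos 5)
Your proposal reproduces the paper's proof essentially step for step: the same decomposition $DF(\bar x_{\sC})=D_0+DB(\bar x_{\sC})$, the same bandwidth argument showing $A_fD_0\proj{(3M+1,\infty)}{C}=0$, the same observation that $A_\infty DB(\bar x_{\sC})=0$ because $\proj{(M,\infty)}{C}$ annihilates the zeroth Chebyshev coefficient, the same splitting of $I-ADF$ along $\proj{}{\mathbb{R}}$, $\proj{[0,M]}{C}$, $\proj{(M,3M+1]}{C}$, $\proj{(3M+1,\infty)}{C}$, and the same identification of the four resulting contributions. The two non-routine pieces you isolate — the geometric tail bound $|h(\pm1)|\le 2\sum_{m>K}|h_m|\le\omega^{-(K+1)}\|h\|_{\sC}$ for the evaluation functionals, and the replacement of $W$ by $\bar W$ in $\partial_\sigma W^{(i)}(\theta,\bar\sigma)$ controlled via the derivative of the geometric series $\sum_{n\ge1}n|\bar\sigma|^{n-1}=(1-|\bar\sigma|)^{-2}$ — are exactly the two error sources the paper treats, with the identical constants $\omega^{-(3M+2)}$, $(1-|\bar\sigma|)^{-2}$, and $\omega|\kappa|/M$ coming from $\|T\|\le 2\omega$ and $\|L^{-1}\proj{(M,\infty)}{C}\|\le 1/(2M)$. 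This is the same proof as the paper's.
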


\begin{proof}
    In this case, the derivate of the validation map is given by:

    \begin{equation} 
        DF(\bar{x}_{\sC}) = \begin{pmatrix} 0 & 0\\ 0 &  L  + \kappa \ TDf (\bar{s})   \end{pmatrix}   + DB(\bar{x}_{\sF}) .
    \end{equation}
    \[
        D f (\bar{s})s =
        \begin{pmatrix}
            s^{(2)}                                                           \\
            -  as^{(1)}
            +  b\left(\bar{s}^{(3)}*_{\sC}s^{(1)}\right)
            +  b\left(s^{(3)}*_{\sC}\bar{s}^{(1)}\right)
            +  3c\left(\bar{s}^{(1)}*_{\sC}\bar{s}^{(1)}*_{\sC}s^{(1)}\right) \\
            s^{(4)}                                                           \\
            -4s^{(3)}
        \end{pmatrix}.
    \]
    Analogously to the previous sections we have
    \begin{equation}
        \label{eq:soliton_bound_Df}
        \lVert Df(\bar{s}) \rVert_{B(X_{\sC})} \leq
        \max \left\{ 4, |a| + |b|\lVert \bar{s}^{(1)} \lVert_{\sC} + |b|\lVert \bar{s}^{(3)} \lVert_{\sC}  + 3|c|\lVert \bar{s}^{(1)}*\bar{s}^{(1)}  \lVert_{\sC}   \right\}.
    \end{equation}
    The term \(D B(\bar{x}_{\sC})x\) is nonzero only in the parameter space component and the first coefficients of each sequence variable. Indeed, for \(x = (\sigma, s) \in X_{\sC}\) we have
    \[
        \proj{}{\mathbb{R}}  DB(\bar{x}_{\sC})x =
        s^{(2)}(-1),
    \]
    \[
        \proj{[0,\infty)}{C} DB(\bar{x}_{\sC}) x=
        \proj{\{0\}}{C} DB(\bar{x}_{\sC}) x=
        \left(
        s^{(1)}(1) - \sigma \frac{\partial}{\partial \sigma} W^{(1)}(\theta,\bar{\sigma}) ,
        s^{(2)}(1) - \sigma \frac{\partial}{\partial \sigma} W^{(2)}(\theta,\bar{\sigma}),
        s^{(3)}(-1)   ,
        s^{(4)}(-1)
        \right)
        .
    \]
    To find a bound for $Z_1$, we split the action of the operator $I - ADF(\bar{x}_{\sC})$ as follows
    \begin{align}
        \label{eq:soliton_splliting}
        \lVert  I - A D F (\bar{x}_{\sC})  \rVert_{B(X_{\sC})}
         & \leq
        \lVert  \proj{}{\mathbb{R}} + \proj{[0, M]}{C}  - A_{f} D F (\bar{x}_{\sC}) \left(\proj{}{\mathbb{R}}  + \proj{[0,3M+1]}{C}\right) \rVert_{B(X_{\sF})} +
        \lVert A_{f} D F (\bar{x}_{\sC})  \proj{(3M+1,\infty)}{C} \rVert_{B(X_{\sC})}
        \\  & +
        \lVert  \proj{(M, 3M+1]}{C} -A_{\infty} D F (\bar{x}_{\sC}) \left(\proj{}{\mathbb{R}}  + \proj{[0,3M+1]}{C}\right)
        +
        \proj{(3M+1,\infty)}{C} - A_{\infty} D F (\bar{x}_{\sC}) \proj{(3M+1, \infty)}{C}\rVert_{B(X_{\sC})}.
        \nonumber
    \end{align}
    We now present computable bounds for each term after the inequality in  \eqref {eq:soliton_splliting}. For the first term, by definition we have
    \[
        A_{f} D F (\bar{x}_{\sC}) \left(\proj{}{\mathbb{R}}  + \proj{[0,3M+1]}{C}\right)
        = A_{f} \begin{pmatrix} 0 & 0\\ 0 &  \proj{[0,M]}{C}  + \kappa \ TDf (\bar{s}) \proj{[0,3M+1]}{C}  \end{pmatrix}
        +
        A_{f} DB(\bar{x}_{\sC}) \left(\proj{}{\mathbb{R}}  + \proj{[0,3M+1]}{C}\right) .
    \]
    Observe that the boundary term can be written as follows
    \[
        A_{f} DB(\bar{x}_{\sC}) \left(\proj{}{\mathbb{R}}  + \proj{[0,3M+1]}{C}\right)
        =
        A_{f} DB(\bar{x}_{\sC}) \proj{[0,3M+1]}{C}
        +
        A_{f} D\bar{B}(\bar{x}_{\sC}) \proj{}{\mathbb{R}}
        +
        A_{f} \left( D\bar{B}(\bar{x}_{\sC}) - DB(\bar{x}_{\sC}) \right) \proj{}{\mathbb{R}} .
    \]
    Where the term $D\bar{B}(\bar{x}_{\sC})\proj{}{\mathbb{R}}$ is defined for $x\in X_{\sC} $ as
    \[
        \proj{}{\mathbb{R}}  D\bar{B}(\bar{x}_{\sC})\proj{}{\mathbb{R}}x =
        0,
        \quad
        \proj{[0,\infty)}{C} D\bar{B}(\bar{x}_{\sC}) \proj{}{\mathbb{R}}x=
        \proj{\{0\}}{C} D\bar{B}(\bar{x}_{\sC})\proj{}{\mathbb{R}}x=
        -\sigma \left(
        \frac{\partial}{\partial \sigma} \bar{W}^{(1)}(\theta,\bar{\sigma}) ,
        \frac{\partial}{\partial \sigma} \bar{W}^{(2)}(\theta,\bar{\sigma}),
        0   ,
        0
        \right)
        .
    \]
    For the next term we apply an analogous result to Lemma \ref{lemma:convolution_product}. In this case,  we consider a truncation up to mode $3M+1$ to account for the action of the operator $T$, which shifts the non-zero modes down by one. Indeed
    \[
        T D f(\bar{s}) \proj{(3M+1,\infty)}{C}
        =
        T  \proj{(M+1,\infty)}{C}  D f(\bar{s}) \proj{(3M+1,\infty)}{C}
        =
        \proj{[M+1,\infty)}{C} T  \proj{[M+1,\infty)}{C}  D f(\bar{s}) \proj{(3M+1,\infty)}{C} .
    \]
    Therefore, the finite part of the operator evaluated at the infinite tail is given by
    \begin{equation*}
        \begin{aligned}
            A_{f} D F(\bar{x}_{\sC}) \proj{(3M+1,\infty)}{C} & =
            A_{f}\begin{pmatrix} 0 & 0\\ 0 &  L  + \kappa \ TDf (\bar{s})   \end{pmatrix} \proj{(3M+1,\infty)}{C}  +A_{f} DB(\bar{x}_{\sF}) \proj{(3M+1,\infty)}{C}
            \\ &=
            A_{f}\begin{pmatrix} 0 & 0\\ 0 &   \kappa \proj{[0,M]}{C}\ TDf (\bar{s}) \proj{(3M+1,\infty)}{C}  \end{pmatrix}  +A_{f} DB(\bar{x}_{\sF}) \proj{(3M+1,\infty)}{C}
            \\  &=
            A_{f} D B(\bar{x}_{\sC})\proj{(3M+1,\infty)}{C}.
        \end{aligned}
    \end{equation*}
    We continue with the third term in equation \eqref{eq:soliton_splliting}. For the evaluation of the infinite part of the operator at the finite tail is straightforward to show that
    \begin{equation*}
        \begin{aligned}
            \proj{(M,3M+1]}{C}  - A_{\infty} D F (\bar{x}_{\sC})\left(\proj{}{\mathbb{R}} + \proj{[0,3M+1]}{C} \right) & =
            - \ \kappa \ L^{-1} \proj{(M,\infty)}{C} T D f (\bar{s})\proj{[0,3M+1]}{C}
            - L^{-1} \proj{(M,\infty)}{C}D B (\bar{x}_{\sC})\left(\proj{}{\mathbb{R}} + \proj{[0,3M+1]}{C} \right)
            .
        \end{aligned}
    \end{equation*}
    The second term in the equation above cancels out. Indeed,
    \[
        L^{-1} \proj{(M,\infty)}{C}D B (\bar{x}_{\sC})\left(\proj{}{\mathbb{R}} + \proj{[0,3M+1]}{C} \right)
        =
        L^{-1} \proj{(M,\infty)}{C}
        \left(\proj{}{\mathbb{R}} + \proj{\{0\}}{C}\right)
        D B (\bar{x}_{\sC})\left(\proj{}{\mathbb{R}} + \proj{[0,3M+1]}{C} \right)=(0,0).
    \]
    Similarly, the infinite part of the operator evaluated at the infinite tail is given by
    \begin{equation*}
        \begin{aligned}
            \proj{(3M+1,\infty)}{C} -A_{\infty} D F(\bar{x}_{\sC}) \proj{(3M+1,\infty)}{C} & =
            -  \kappa \ L^{-1} \proj{(M,\infty)}{C} T D f(\bar{s})\proj{(3M+1,\infty)}{C}
        \end{aligned}.
    \end{equation*}
    After putting all the terms together and using the triangle inequality we obtain the following bound
    \begin{equation*}
        \begin{aligned}
            \lVert I - ADF(\bar{x}_{\sC})  \rVert_{B(X_{\sC})} & \leq
            \lVert
            \proj{}{\mathbb{R}} \ + \proj{[0,M]}{C} - A_{f} \begin{pmatrix} 1 & 0\\ 0 &  \proj{[0,M]}{C}  + \kappa \ TDf (\bar{s}) \proj{[0,3M+1]}{C}  \end{pmatrix}
            \\&
            \quad +
            A_{f}
            DB(\bar{x}_{\sC})  \proj{[0,3M+1]}{C}
            +
            A_{f}  D\bar{B}(\bar{x}_{\sC}) \proj{}{\mathbb{R}}
            \rVert_{B(X_{\sC})}
            \\&
            \quad +
            \lVert A_{f}
            \rVert_{B(X_{\sC})}
            \lVert  \left( D\bar{B}(\bar{x}_{\sC}) - DB(\bar{x}_{\sC}) \right) \proj{}{\mathbb{R}}\rVert_{B(X_{\sC})}
            \\&
            \quad +
            |\kappa|
            \lVert L^{-1}
            \proj{(M,\infty)}{C} \rVert_{B(X_{\sC})}\lVert T \rVert_{B(X_{\sC})}
            \lVert Df(\bar{s}) \rVert_{B(X_{\sC})}
            \\&
            \quad +   \lVert A_{f}
            \rVert_{B(X_{\sC})}
            \lVert D B(\bar{x}_{\sC})\proj{(3M+1,\infty)}{C}\rVert_{B(X_{\sC})} .
        \end{aligned}
    \end{equation*}
    The first term after the inequality are already computable. For the terms involving the evaluation at the parameter space component of \(D B(\bar{x}_{\sC})\),  for any $x=(\sigma,s)\in X_{\sC}$ such that $\lVert x \rVert_{X_{\sC}}\leq1$ we have
    \begin{equation}
        \begin{aligned}
            \lVert \left( D\bar{B}(\bar{x}_{\sC}) - DB(\bar{x}_{\sC}) \right) \proj{}{\mathbb{R}}  x\rVert_{X_{\sC}} & \leq
            \max_{i=1,2}\left\{
            \left|
            \frac{\partial}{\partial \sigma} W^{(i)}(\theta,\bar{\sigma})
            -
            \frac{\partial}{\partial \sigma} \bar{W}^{(i)}(\theta,\bar{\sigma})
            \right|
            \right\}
        \end{aligned}
    \end{equation}
    In order to bound the term involving the derivative we use \eqref{eq:TF_expansion} to obtain
    \[
        \frac{\partial  }{\partial \sigma }W^{(i)}(\theta,\bar{\sigma}) = \sum_{n \in \N}
        (n+1)  W_{n+1}^{(i)}
        \bar{\sigma}^{n}.
    \]
    Since we a supposing that the paramaterization $W$ of the stable manifold is obtained as in Theorem \ref{Theorem:Manifold}, we know there exists a $\delta \in X_{\sTF}$ such that
    \[
        w = \bar{w} + \delta
        \quad
        \text{and}
        \quad
        \left\| \delta ^{(i)}\right\|_{\sTF }
        =
        \sum_{n\geq 0} \left\| \delta^{(i)}_{n} \right\|_{\sF }
        \leq r_{\sTF}
    \]
    Moreover, we are supposing that  $-1<\bar{\sigma}<1$ and $1\leq\nu$, hence
    \[
        \left|
        \frac{\partial}{\partial \sigma} W^{(i)}(\theta,\bar{\sigma})
        -
        \frac{\partial}{\partial \sigma} \bar{W}^{(i)}(\theta,\bar{\sigma})
        \right|
        \leq
        \left|
        \sum_{n \in \N}
        (n+1)
        \bar{\sigma}^{n} \delta_{n+1}^{(i)}
        \right|
        \leq
        \left\| \delta^{(i)}_n \right\|_{\sF}
        \sum_{n=0}^{\infty} (n+1)|\bar{\sigma}|^{n}
        \leq \frac{r_{\sTF}}{(1- |\bar{\sigma}|)^2 }.
    \]
    Then, we can conclude that
    \[
        \lVert  \left( D\bar{B}(\bar{x}_{\sC}) - DB(\bar{x}_{\sC}) \right) \proj{}{\mathbb{R}} \rVert_{B(X_{\sC})}  \leq
        \frac{r_{\sTF}}{(1- |\bar{\sigma}|)^2 }
        .
    \]
    The third term can be bounded using equation \eqref{eq:soliton_bound_Df} together with the bounds
    \[
        \lVert T \rVert_{B(X_{\sC})} \leq 2\omega,
        \quad
        \lVert L^{-1} \proj{(M,\infty)}{C}\rVert_{B(X_{\sC})}  \leq \frac{1}{2M}.
    \]
    Finally, for the evaluation at the infinite tail, first remember that  $\omega \geq 1$. Hence, it holds that
    \[
        2\sum_{m > 3M+1} |s_{m}^{(i)}| = 2\sum_{m > 3M+1} \frac{|s_{m}^{(i)}|\omega^m}{\omega^m} \leq \frac{\lVert s^{(i)} \rVert_{\sC}}{\omega^{3M+2}}\leq\frac{1}{\omega^{3M+2}}.
    \]
    for $i=1,2,3,4$ and therefore
    \[
        \lVert D B(\bar{x}_{\sC})\proj{(3M+1,\infty)}{C} x \rVert_{X_{\sC}} = \max_{i=1,2,3,4} \left \{2\sum_{m > 3M+1}  |s_{m}^{(i)}|  \right \}
        \leq
        \frac{1}{\omega^{3M+2}}.
    \]
    We have shown that a computable $Z_1$ bound is given by equation \eqref{eq:soliton_Z1}.
\end{proof}

\begin{lemma}
    \label{lemma:soliton_Z2}
    Let $r\in \mathbb{R}$ such that $|\bar{\sigma}+r| < 1$. A computable $Z_{2}(\bar{x}_{\sC}, r_{\sTF})$  satisfying  $\lVert A(DF(x) - DF(\bar{x}_{\sC})) \rVert_{B(X_{\sC})} \leq Z_{2}(\bar{x}_{\sC}, r_{\sTF}) \lVert x - \bar{x}_{\sC} \rVert_{X_{\sC}} $ for all $x$ in $B(\bar{x}_{\sC}, r)$ is given by

    \begin{align}\label{eq:soliton_Z2}
        Z_{2}(\bar{x}_{\sC}, r_{\sTF}) & \bydef
        2\omega|\kappa|
        \left(
        \lVert A_{f} \rVert_{B(X_{\sC})}
        +
        \frac{1}{2M}
        \right)
        \left( 2|b| + 6|c|\lVert \bar{s}^{(1)}_{\sC} \rVert_{\sC} + 3|c|r \right )
        \\ &
        +
        \lVert A_{f} \left(\proj{}{\mathbb{R}}  + \proj{\{0\}}{C}\right) \rVert_{B(X_{\sC})} \max_{i=1,2} \left\{
        \sup_{\sigma \in B_{\R}(\bar{\sigma},r)}
        \frac{\left(|\bar{\sigma}|^2 + |\bar{\sigma}|\right)r_{\sTF} }{(1- |\bar{\sigma}|)^3 }
        +
        \frac{|\bar{\sigma}|r_{\sTF} + 2r_{\sTF}}{(1- |\bar{\sigma}|)^2 }
        +
        \left | \frac{\partial^2  }{\partial \sigma ^2 }\bar{W}^{(i)}(\theta,\sigma) \right |
        \right\}.
        \nonumber
    \end{align}
\end{lemma}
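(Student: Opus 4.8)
The plan is to verify hypothesis \eqref{eq:Z2_bound_general} of Theorem~\ref{Theorem:Newton} directly. Fix $x \in B(\bar{x}_{\sC}, r)$ and write $x = \bar{x}_{\sC} + \delta$ with $\delta = (\sigma_\delta, s_\delta) \in X_{\sC}$, so that $\lVert \delta \rVert_{X_{\sC}} \le r$, $|\sigma_\delta| \le \lVert \delta \rVert_{X_{\sC}}$ and $\lVert s_\delta^{(i)} \rVert_{\sC} \le \lVert \delta \rVert_{X_{\sC}}$. Using the formula for $DF$ obtained in the proof of Lemma~\ref{lemma:soliton_Z1}, the difference $DF(x) - DF(\bar{x}_{\sC})$ decomposes into a \emph{polynomial part}, acting only on the second Chebyshev component through $\kappa\,T\,\bigl(Df(\bar{s}+s_\delta) - Df(\bar{s})\bigr)$, and a \emph{boundary part} $DB(\bar{x}_{\sC}+\delta) - DB(\bar{x}_{\sC})$. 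I would bound the $A$-image of each in $B(X_{\sC})$ separately; the two contributions then match the two lines of \eqref{eq:soliton_Z2}.

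For the polynomial part, the increment $\bigl(Df(\bar{s}+s_\delta) - Df(\bar{s})\bigr)h$ is supported in the second component and equals $b\,(s_\delta^{(3)} *_{\sC} h^{(1)}) + b\,(h^{(3)} *_{\sC} s_\delta^{(1)}) + 3c\,\bigl((2\,\bar{s}^{(1)} *_{\sC} s_\delta^{(1)} + s_\delta^{(1)} *_{\sC} s_\delta^{(1)}) *_{\sC} h^{(1)}\bigr)$, every summand carrying at least one factor of $s_\delta$. Applying the Banach algebra property $\lVert p *_{\sC} q \rVert_{\sC} \le \lVert p \rVert_{\sC}\lVert q \rVert_{\sC}$ together with $\lVert s_\delta^{(i)} \rVert_{\sC} \le r$ and $\lVert h \rVert_{X_{\sC}} \le 1$ bounds this by $\bigl(2|b| + 6|c|\,\lVert \bar{s}^{(1)}\rVert_{\sC} + 3|c|\,r\bigr)\lVert \delta \rVert_{X_{\sC}}$ in $X_{\sC}$. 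Multiplying by $\kappa T$ (with $\lVert T \rVert_{B(X_{\sC})} \le 2\omega$) and then by $A = A_f + A_\infty$ (with $\lVert A_\infty \rVert_{B(X_{\sC})} = \lVert L^{-1}\proj{(M,\infty)}{C}\rVert_{B(X_{\sC})} \le \tfrac{1}{2M}$) produces exactly the first line of \eqref{eq:soliton_Z2}.

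For the boundary part, the expression for $DB$ from the proof of Lemma~\ref{lemma:soliton_Z1} shows that $DB(\bar{x}_{\sC}+\delta) - DB(\bar{x}_{\sC})$ vanishes on every component except the zeroth Chebyshev mode of components $i=1,2$, on which it sends $h=(\sigma_h,s_h)$ to $-\sigma_h\bigl(\tfrac{\partial}{\partial\sigma}W^{(i)}(\theta,\bar{\sigma}+\sigma_\delta) - \tfrac{\partial}{\partial\sigma}W^{(i)}(\theta,\bar{\sigma})\bigr)$. Since $A_\infty$ annihilates the zeroth mode, its $A$-image equals its $A_f(\proj{}{\mathbb{R}}+\proj{\{0\}}{C})$-image, and using $|\sigma_h|\le 1$ this contribution is at most $\lVert A_f(\proj{}{\mathbb{R}}+\proj{\{0\}}{C})\rVert_{B(X_{\sC})}\,\max_{i=1,2}\bigl|\tfrac{\partial}{\partial\sigma}W^{(i)}(\theta,\bar{\sigma}+\sigma_\delta) - \tfrac{\partial}{\partial\sigma}W^{(i)}(\theta,\bar{\sigma})\bigr|$. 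It remains to bound this increment by $|\sigma_\delta|$ (hence by $\lVert\delta\rVert_{X_{\sC}}$) times the bracketed supremum in \eqref{eq:soliton_Z2}. I would split $W^{(i)} = \bar{W}^{(i)} + E^{(i)}$, where Theorem~\ref{Theorem:Manifold} gives $\lVert E^{(i)}\rVert_{\sTF}\le r_{\sTF}$: the $\bar{W}$ piece is handled by the mean value theorem, yielding $|\sigma_\delta|\,\sup_{\sigma\in B_{\R}(\bar{\sigma},r)}\bigl|\tfrac{\partial^2}{\partial\sigma^2}\bar{W}^{(i)}(\theta,\sigma)\bigr|$ (computable by interval evaluation of the polynomial $\bar{W}$); the $E$ piece is estimated by expanding $\tfrac{\partial}{\partial\sigma}E^{(i)}(\theta,\sigma) = \sum_{n\ge1} n\,E_n^{(i)}(\theta)\,\sigma^{n-1}$, subtracting the two evaluations, factoring $\sigma_\delta$ out of the differences of powers, and summing the resulting series against $\lVert E_n^{(i)}\rVert_{\sF}\le r_{\sTF}$ via explicit power-series sums in $|\bar{\sigma}|$ (such as $\sum_{n\ge1} n\,|\bar{\sigma}|^{n-1} = (1-|\bar{\sigma}|)^{-2}$ and $\sum_{n\ge1} n(n+1)\,|\bar{\sigma}|^{n-1} = 2(1-|\bar{\sigma}|)^{-3}$), which yields the remaining closed forms in \eqref{eq:soliton_Z2}. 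The hypothesis $|\bar{\sigma}+r| < 1$ keeps every evaluation of $W$ and its $\sigma$-derivatives inside the disk $|\sigma|<1$ where the Taylor series converge, so all these sums are finite.

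Collecting the two bounds gives an estimate of the form $Z_2(\bar{x}_{\sC},r_{\sTF})\,\lVert x - \bar{x}_{\sC}\rVert_{X_{\sC}}$ with $Z_2$ as in \eqref{eq:soliton_Z2}, every constant being evaluated rigorously by interval arithmetic. The step I expect to be the main obstacle is the last one: controlling the power-series increment $\tfrac{\partial}{\partial\sigma}E^{(i)}(\theta,\bar{\sigma}+\sigma_\delta) - \tfrac{\partial}{\partial\sigma}E^{(i)}(\theta,\bar{\sigma})$ so that (i) a full factor of $|\sigma_\delta|$ is extracted, making the bound genuinely linear in $\lVert x - \bar{x}_{\sC}\rVert_{X_{\sC}}$; (ii) the remaining series telescope into the explicit closed forms in $|\bar{\sigma}|$ and $r_{\sTF}$; and (iii) convergence is guaranteed from $|\bar{\sigma}|<1$ alone, since beyond $\sum_n\lVert E_n^{(i)}\rVert_{\sF}\le r_{\sTF}$ we have no quantitative decay of the Taylor coefficients $E_n^{(i)}$.
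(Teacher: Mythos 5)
Your overall architecture matches the paper's: split $DF(x)-DF(\bar{x}_{\sC})$ into the polynomial part, bounded via the Banach algebra property and $\lVert T\rVert\le 2\omega$, $\lVert L^{-1}\proj{(M,\infty)}{C}\rVert\le\tfrac{1}{2M}$, and the boundary part, where you correctly note that $A_\infty$ annihilates the zeroth Chebyshev mode so only $A_f(\proj{}{\mathbb{R}}+\proj{\{0\}}{C})$ survives. The one place you diverge is the final step. You split $W=\bar{W}+E$ first, apply the mean value theorem to $\bar{W}$ alone, and then try to extract the $\sigma_\delta$ factor from the $E$-piece by telescoping $(\bar\sigma+\sigma_\delta)^{n-1}-\bar\sigma^{n-1}$ termwise — the step you correctly flag as the main obstacle. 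The paper avoids this bookkeeping by applying the mean value inequality to the \emph{full} $W^{(i)}(\theta,\cdot)$ first, which is legitimate because the termwise bound $\sup_n\lVert E_n^{(i)}\rVert_{\sF}\le r_{\sTF}$ (a consequence of $\sum_n\lVert E_n^{(i)}\rVert_{\sF}\le r_{\sTF}$) already makes $W^{(i)}$ analytic, in particular $C^2$, in $\sigma$ on $|\sigma|<1$; this yields a factor of $r$ times $\sup_\sigma|\partial^2_\sigma W^{(i)}(\theta,\sigma)|$, and only \emph{then} does one split the second derivative into $\partial^2_\sigma\bar{W}^{(i)}$ plus a tail bounded by $r_{\sTF}\sum_{n\ge0}(n+2)(n+1)|\sigma|^n$. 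Both routes reach the same closed form: the three $|\bar\sigma|$-terms in \eqref{eq:soliton_Z2} sum to $2r_{\sTF}/(1-|\bar\sigma|)^3 = r_{\sTF}\sum_{n\ge0}(n+2)(n+1)|\bar\sigma|^n$, which is exactly what your telescoping would produce as well. Your concern (iii) is unfounded: no decay of $E_n^{(i)}$ beyond the uniform bound $\lVert E_n^{(i)}\rVert_{\sF}\le r_{\sTF}$ is needed, because each term is paired with a geometric factor $|\sigma|^n$ and $|\bar\sigma|+r<1$ by hypothesis, so the power series $\sum_n n(n-1)|\sigma|^{n-2}$ converges; applying MVI to $W$ rather than to $\bar{W}+E$ separately is simply the cleaner way to see that the argument is linear in $\lVert x-\bar{x}_{\sC}\rVert_{X_{\sC}}$.
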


\begin{proof}
    Observe that for any $ x =(\sigma_x , s_x) \in B(\bar{x}_{\sC}, r) $ we have,
    \[
        \lVert  DF(x) - DF(\bar{x}_{\sC})  \rVert_{B(X_{\sC})}
        \leq
        |\kappa|
        \lVert T \rVert_{B(X_{\sC})}  \lVert  Df(s_x) - Df(\bar{s})  \rVert_{B(X_{\sC})}
        +
        \lVert  DB(x) - DB(\bar{x}_{\sC})  \rVert_{B(X_{\sC})}.
    \]
    For the terms not involving the parameterization of the stable manifiold we proceed as in our previous proofs:
    \[
        \lVert  Df(x) - Df(\bar{x}_{\sC}) \rVert_{B(X_{\sC})} \leq r \left ( 2|b| + 6|c|\lVert \bar{s}^{(1)} \rVert_{\sC} + 3|c|r \right ).
    \]
    For the term including the boundary conditions. We begin by considering an element $h=(\sigma_{h}, s_{h})$ of $X_{\sC}$ such that $\lVert h\lVert_{X_{\sC}} \leq 1 $ . We have that
    \begin{equation*}
        \begin{aligned}
            \lVert \left(  DB(x) - DB(\bar{x}_{\sC}) \right) h \rVert_{X_{\sC}}
             & =
            \lVert \proj{\{0\}}{C} \left( DB(x) - DB(\bar{x}_{\sC}) \right)h \rVert_{X_{\sC}}
            \\  & \leq
            |\sigma_{h}| \max_{i=1,2} \left\{
            \left|
            \frac{\partial  }{\partial \sigma}W^{(i)}(\theta,\sigma_{x})
            -
            \frac{\partial  }{\partial \sigma}W^{(i)}(\theta,\bar{\sigma})
            \right|
            \right\}
            \\  & \leq
            r \max_{i=1,2} \left\{
            \left|
            \frac{\partial^2  }{\partial \sigma ^2 }W^{(i)}(\theta, d_i)
            \right|
            \right\},
        \end{aligned}
    \end{equation*}
    for some  $d_1, d_2$ in $B_{\R}(\bar{\sigma},r)$, where we have use the  mean value inequality. To compute the second derivatives in the previous expression we proceed as in the proof of Lemma \ref{lemma:soliton_Z1}. For $i = 1,2$,  we know that
    \[
        \frac{\partial^2  }{\partial \sigma ^2 }W^{(i)}(\theta,\sigma) = \sum_{n \in \N}
        (n+2)(n+1)  W_{n+2}^{(i)}
        \sigma^{n}.
    \]
    Hence, for any  $|\sigma|<1$  in $B_{\R}(\bar{\sigma},r)$  since $|\bar{\sigma}+r| < 1$, we have
    \[
        \sup_{\sigma \in B_{\R}(\bar{\sigma},r)}
        \left | \frac{\partial^2  }{\partial \sigma ^2 }W^{(i)}(\theta,\sigma) \right | \leq
        \sup_{\sigma \in B_{\R}(\bar{\sigma},r)}
        \frac{\left(|\sigma|^2 + |\sigma|\right)r_{\sTF} }{(1- |\sigma|)^3 }
        +
        \frac{|\sigma|r_{\sTF}}{(1- |\sigma|)^2 }
        +
        \frac{2r_{\sTF}}{(1- |\sigma|)^2 }+
        \left |\frac{\partial^2  }{\partial \sigma ^2 }\bar{W}^{(i)}(\theta,\sigma) \right | .
    \]
    The above follows from the fact that
    \[
        \sum_{n=0}^{\infty} n|\sigma|^{n} = \frac{|\sigma|}{(1- |\sigma|)^2 },
        \quad
        \sum_{n=0}^{\infty} n^2|\sigma|^{n} = \frac{|\sigma|^2 + |\sigma|}{(1- |\sigma|)^3 }.
    \]
    We note that finding the supremum of a computable formula over a ball is equivalent to taking the upper bound of the interval evaluation of the expression. Thus, a computable bound for \(Z_2\) is given by \eqref{eq:soliton_Z2}.
\end{proof}
The following Theorem gives us the computable conditions to check that close to an approximate zero of sequence equation \eqref{eq:soliton_F} there exists a true zero.

\begin{theorem}
    \label{Theorem:Soliton}
    Fix parameters \(a\), \(b\), and \(c\), a weight \(\nu\) for the norm in \eqref{eq:S_F}. Suppose $W$ is a parameterization for the stable manifold attached to the periodic orbit $\gamma$ with radius $r_{\sTF}$ as obtained by Theorem \ref{Theorem:Manifold}.  Let $\bar{x}_{\sC}= (\bar{\sigma}, \bar{s}) \in X_{\sC}$ such that each component of $\bar{s}$ is supported in $[0,M]$.
    Additionally, assume that the matrix \(\bar{A}(\bar{x}_{\sC})\) from operator \eqref{eq:bundle_A} is computed as the numerical inverse of
    \[
        \left( \proj{}{\mathbb{C}} + \proj{[0, M]}{C} \right)
        DF(\bar{x}_{\sC}) \left( \proj{}{\mathbb{C}} + \proj{[0, M]}{C} \right).
    \]
    Suppose the computable bounds \(Y(\bar{x}_{\sC}, r_{\sTF})\), \(Z_1(\bar{w}, \bar{x}_{\sF})\), and \(Z_{2}(\bar{x}_{\sC}, r_{\sTF}^{*})\) defined in \eqref{eq:soliton_Y}, \eqref{eq:soliton_Z1} and \eqref{eq:soliton_Z2} satisfy
    \[    Z_{1}(\bar{x}_{\sC}, r_{\sTF}) < 1 \quad \text{and} \quad Z_{2}(\bar{x}_{\sC}, r_{\sC}^{*}) < \frac{(1 - Z_{1}(\bar{x}_{\sC}, r_{\sTF}))^{2}}{2Y(\bar{x}_{\sC}, r_{\sTF})}
    \]
    for some $r_{\sC}^{*}$.  Then, the validation map \eqref{eq:soliton_F} has a unique zero  $x_{\sC} = (\sigma, s)$ in the ball $B_{X_{\sC}}(\bar{x}_{\sC}, r_{\sC})$  such that $F(x_{\sC}) =0$,
    where the radius of the ball is given by:
    \[
        r_{\sC}=\frac{1 - Z_{1}(\bar{x}_{\sC}, r_{\sTF})- \sqrt{ (1- Z_{1}(\bar{x}_{\sC}, r_{\sC}^{*}))^2 - 2Y(\bar{x}_{\sC}, r_{\sTF})Z_{2}(\bar{x}_{\sC}, r_{\sC}^{*})}}{Z_{2}(\bar{x}_{\sC}, r_{\sC}^{*})}.
    \]
\end{theorem}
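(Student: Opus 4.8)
The plan is to apply the Newton--Kantorovich Theorem~\ref{Theorem:Newton} to the validation map $F$ of \eqref{eq:soliton_F}, the approximate solution $\bar{x}_{\sC}$, and the operator $A = A_f + A_\infty$ defined above. First I would record that $F : X_{\sC} \to X_{\sC}$ is a $C^1$ map on a neighbourhood of $\bar{x}_{\sC}$: the block $Ls + \kappa\, T f(s)$ is polynomial in $s$ because the Chebyshev convolution $*_{\sC}$ is a bounded multilinear map on $S_{\sC}$ (the Banach-algebra property), while the boundary operator $B$ is affine in $s$ and $\sigma$ apart from the terms $W^{(i)}(\theta,\sigma)$, which are real-analytic in $\sigma$ on $(-1,1)$ thanks to the convergent Taylor--Fourier expansion \eqref{eq:TF_expansion} furnished by Theorem~\ref{Theorem:Manifold}. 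Since $\bar{\sigma} \in (-1,1)$ and, by the standing hypothesis $|\bar{\sigma}| + r_{\sC}^* < 1$ built into Lemma~\ref{lemma:soliton_Z2}, every $x = (\sigma, s) \in B(\bar{x}_{\sC}, r_{\sC}^*)$ satisfies $|\sigma| < 1$, both $F$ and $DF$ are well defined and continuous on that ball.

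Next I would verify that $A$ is injective. On the infinite tail, $A_\infty$ acts on the range of $\proj{(M,\infty)}{C}$ as the diagonal operator $L^{-1}$ with nonzero entries $\frac{1}{2m}$, $m > M \ge 1$, hence injectively; on the finite-dimensional complement $\proj{}{\mathbb{R}} + \proj{[0,M]}{C}$, $A_f$ is the $[4(M+1)+1] \times [4(M+1)+1]$ matrix $\bar{A}(\bar{x}_{\sC})$. Its invertibility is checked directly with interval arithmetic; in fact it is automatic once $Z_1 < 1$, since then $I - A\,DF(\bar{x}_{\sC})$ has operator norm $< 1$ on the finite block, which forces $A$ to be one-to-one there. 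Combining the two blocks shows $A$ is injective on $X_{\sC}$, as Theorem~\ref{Theorem:Newton} requires.

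With these structural facts in hand, Lemmas~\ref{lemma:soliton_Y}, \ref{lemma:soliton_Z1} and \ref{lemma:soliton_Z2} supply nonnegative constants $Y = Y(\bar{x}_{\sC}, r_{\sTF})$, $Z_1 = Z_1(\bar{x}_{\sC}, r_{\sTF})$ and $Z_2 = Z_2(\bar{x}_{\sC}, r_{\sC}^*)$ that satisfy exactly the bounds \eqref{eq:Y_bound_general}--\eqref{eq:Z2_bound_general} on $B(\bar{x}_{\sC}, r_{\sC}^*)$. The two displayed hypotheses $Z_1 < 1$ and $Z_2 < \min\left( \frac{(1-Z_1)^2}{2Y},\, r_{\sC}^* \right)$ are then precisely the conditions of Theorem~\ref{Theorem:Newton} with $r^* = r_{\sC}^*$: the discriminant $(1-Z_1)^2 - 2YZ_2$ is positive and the admissible interval of radii is nonempty, so its left endpoint $r_{\sC}$ lies below $\min\left( \frac{1-Z_1}{Z_2},\, r_{\sC}^* \right)$. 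Theorem~\ref{Theorem:Newton} then delivers a unique zero $x_{\sC} = (\sigma, s)$ of $F$ in $B_{X_{\sC}}(\bar{x}_{\sC}, r_{\sC})$, which by the construction of $F$ in \eqref{eq:soliton_F} is a Chebyshev representation of a genuine solution of the BVP \eqref{eq:BVP}.

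I expect the main subtlety to be the self-referential role of the radius rather than any single estimate: the constant $Z_2$ in \eqref{eq:soliton_Z2} depends on the ball radius $r$ through the term $3|c|r$ and the supremum over $\sigma \in B_{\R}(\bar{\sigma}, r)$ controlling the second $\sigma$-derivative of $W$, so one must fix an a priori radius $r_{\sC}^*$ with $|\bar{\sigma}| + r_{\sC}^* < 1$, evaluate $Z_2$ there, check the quadratic inequality, and only then extract the true existence radius $r_{\sC} \le r_{\sC}^*$. One must also carry the uncertainty $r_{\sTF}$ of the manifold parameterization through every bound, since it enters both $Y$ (via the residual of the boundary conditions involving $\bar W$) and the $\sigma$-derivative estimates of $W$ in $Z_1$ and $Z_2$; this is exactly what the $r_{\sTF}$-dependence in Lemmas~\ref{lemma:soliton_Y}--\ref{lemma:soliton_Z2} accounts for. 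Apart from this bookkeeping the theorem is a direct instantiation of the Newton--Kantorovich scheme.
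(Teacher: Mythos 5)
Your proof is correct and takes the same route as the paper: the paper's own proof is literally the single sentence ``The result follows from Theorem~\ref{Theorem:Newton},'' relying implicitly on Lemmas~\ref{lemma:soliton_Y}--\ref{lemma:soliton_Z2} for the bounds, exactly as you do. Your additional remarks on the $C^1$ regularity of $F$, the injectivity of $A = A_f + A_\infty$ via its block structure, and the self-referential role of $r_{\sC}^*$ through $Z_2$ and the $|\bar{\sigma}| + r_{\sC}^* < 1$ requirement are all sound and make explicit what the paper leaves to the reader.
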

\begin{proof}
    The result follows from Theorem \ref{Theorem:Newton}.
\end{proof}
We are now ready to validate numerically approximated soliton solutions.


\section{Examples of Constructive Proofs of Existence of Gap Solitons} \label{sec:conclusions}

In this section, we present examples of our computer-assisted method for proving the existence of soliton solutions to equation \eqref{eq:GP_ode}. Using Theorems \ref{Theorem:Bundle}, \ref{Theorem:Manifold}, and \ref{Theorem:Soliton}, we demonstrate that true soliton solutions exist near numerically approximated ones. 

Given a numerical approximation to a soliton solution, our approach requires implementing all relevant expressions—operators, derivatives, and bounds—introduced in earlier sections. The computer-assisted component comes in the form of rigorously evaluating these quantities using interval arithmetic. For our examples, we use the Julia programming language and we use interval arithmetic through the \texttt{IntervalArithmetic.jl} Julia package \cite{RadiiPolynomial.jl}. Our code implementation is available in the repository associated with this paper \cite{github_codes}.

To verify all bounds in the hypotheses of Theorems~\ref{Theorem:Bundle}, \ref{Theorem:Manifold}, and \ref{Theorem:Soliton}, we evaluate each computable quantity using intervals rather than individual floating-point numbers. This guarantees that the true value lies within the resulting interval. The right endpoint provides a rigorous upper bound for the quantity being estimated. As a first example, we restate Theorem~\ref{theorem:main} and provide a computer-assisted proof.

\begin{theorem}
    The Gross-Pitaevskii equation \eqref{eq:GP_ode} with parameters \( a = 1.1025 \), \( b = 0.55125 \), and \( c = -0.826875 \) has a soliton solution $u:\R \to \R$,
    satisfying
    \[
    \|u - \bar{u}\|_{\infty} \leq 8.617584260554394 \cdot 10^{-6},
    \]
    where \( \bar{u} \) is a numerical approximation of the solution illustrated in Figure~\ref{fig:Parts_of_proof}. 
\end{theorem}

\begin{proof}
    We consider computable sequences \(\bar{x}_{\sF} \in X_{\sF}\), \(\bar{x}_{\sTF} \in X_{\sTF}\), and \(\bar{x}_{\sC} \in X_{\sC}\), that approximate zeros of \eqref{eq:bundle_F}, \eqref{eq:manifold_F}, and \eqref{eq:soliton_F}, respectively
    \begin{equation}
    \label{eq:truncations}
        \bar{x}_{\sF}= \proj{[0,M_{\sF}]}{\sF}\bar{x}_{\sF},
        \quad
        \bar{x}_{\sTF}  = \proj{[0,N_{\sT}]}{T}\proj{[0,M_{\sF}]}{F}\bar{x}_{\sTF},
        \quad
        \bar{x}_{\sC} = \proj{[0,M_{\sC}]}{C}\bar{x}_{\sC}.
    \end{equation}
We choose truncation modes \(M_{\sF}\), \(N_{\sT}\), and \(M_{\sC}\) to capture the nonlinear behavior of the equation, so that the final coefficients of each sequence have decayed to the level of rounding error in double precision.
  For this particular set of parameters, we have obtained \(\bar{x}_{\sF} \in X_{\sF}\), \(\bar{x}_{\sTF} \in X_{\sTF}\), and \(\bar{x}_{\sC} \in X_{\sC}\) satisfying \eqref{eq:truncations} for $M_{\sF} = 32$,  $N_{\sT} = 32$  and $M_{\sC} = 48$. Together with the following residual conditions
\begin{equation}
    \label{eq:residual}
    \lVert  F (\bar{x}_{\sF}) \rVert_{X_{\sF}} ,
    \quad
    \lVert  F (\bar{x}_{\sTF}) \rVert_{X_{\sTF}} ,
    \quad
    \lVert  F (\bar{x}_{\sC}) \rVert_{X_{\sC}} \leq 10^{-13}.
\end{equation} 
    For the Fourier expansion, we use the same truncation mode for both the bundle and the manifold, as we can always take the maximum between the two when needed.  The coefficients used in these computations are available in the repository associated with this paper \cite{github_codes}.  In our implementation, we use the \texttt{RadiiPolynomial.jl} Julia package \cite{RadiiPolynomial.jl} to easily manipulate sequences.

The required level of numerical accuracy depends on the solution we aim to validate. However, as a byproduct of our approach, the zero-finding problems \eqref{eq:bundle_F}, \eqref{eq:manifold_F}, and \eqref{eq:soliton_F} define well-conditioned maps that can be used with root-finding algorithms to refine our numerical approximations. In practice, after computing an initial approximation, we apply Newton’s method to these maps to improve accuracy to the required level.
 
Our proof has three stages. We begin with the bundle problem, whose solution provides the input needed to evaluate the validation map for the stable manifold attached to the periodic orbit defined by \eqref{eq:bundle_F}. Next, we construct and validate the parameterization of this manifold. From this, we obtain an explicit boundary condition for the boundary-value problem \eqref{eq:BVP_soliton}, which we then use to validate a numerical approximation of the soliton solution.

For the first stage, the numerical approximation of the bundle problem \(\bar{x}_{\sF} = (\bar{\lambda}, \bar{v}) \in X_{\sF}\) satisfies \(\bar{\lambda} < 0\). Moreover, the coefficients of \(\bar{v}\) satisfy the symmetry condition  
\begin{equation}
    \bar{v}_k^{(i)} = [\bar{v}_{-k}^{(i)}]^*, \quad \text{for} \quad i = 1, 2, \quad k \in \mathbb{Z}.
\end{equation}  
Note that this condition can be directly imposed and verified in the computational implementation. We fix the norm weight \(\nu = 1.05\) in \eqref{eq:S_F} and the scaling factor \(l = 0.5\) in the phase condition \eqref{eq:phase_conditon}. Using interval arithmetic, we validate the following bounds:
\[
Y(\bar{x}_{\sF}) = 2.6879100002352747 \cdot 10^{-13}, \quad
Z_1(\bar{x}_{\sF}) = 0.3465291783592818, \quad
Z_2(\bar{x}_{\sF}) = 14.980732463866438,
\]
as defined in \eqref{eq:bundle_Y}, \eqref{eq:bundle_Z1}, and \eqref{eq:bundle_Z2}. These bounds satisfy the inequalities
\[
Z_{1}(\bar{x}_{\sF}) < 1 \quad \text{and} \quad Z_{2}(\bar{x}_{\sF}) < \frac{(1 - Z_{1}(\bar{x}_{\sF}))^{2}}{2Y(\bar{x}_{\sF})},
\]
which meet the hypotheses of Theorem~\ref{Theorem:Bundle}. We therefore obtain a unique zero of the validation map \eqref{eq:bundle_F},
\[
x_{\sF} \bydef (\lambda, v^{(1)}, v^{(2)}),
\]
within the ball \( B_{X_{\sF}}(\bar{x}_{\sF}, r_{\sF}) \) of radius \( r_{\sF} = 4.122891017172993 \cdot 10^{-13} \). This zero defines a solution
\(
(\lambda, v^{(1)}, v^{(2)}, 0, 0) \in \mathbb{R} \times S^{4}_{\sF},
\)
to the sequence equation \eqref{eq:bundle_coeff} and corresponds to a real solution of the bundle differential equation \eqref{eq:bundle_ODE}.

The second stage of our proof corresponds to the construction of the parameterization of the stable manifold. In this case, for $r_{\sTF}^{*}=10^{-3}$ , the bounds  \(Y(\bar{w}, r_{\sF})=6.327932449800631 \cdot 10^{-9}\), \(Z_1(\bar{w}, \bar{x}_{\sF})=0.9583731072113382\), and \(Z_2(\bar{w}, r_{\sTF}^{*})=104.77593347038471\) defined in \eqref{eq:manifold_Y}, \eqref{eq:manifold_Z1}, and \eqref{eq:manifold_Z2} satisfy
    \[
        Z_{1}(\bar{w}, \bar{x}_{\sF})<  1 \quad \text{and} \quad Z_2(\bar{w}, r_{\sTF}^{*})  < \frac{(1 - Z_{1}(\bar{w}, \bar{x}_{\sF}))^{2}}{2Y(\bar{w}, r_{\sF})}.
    \]
    Therefore, we use Theorem \ref{Theorem:Manifold} to validate our parameterization of the manifold $x_{\sTF}\in X_{\sTF}$ with a radius \(r_{\sTF}=1.5204458252945915\cdot 10^{-7}\). The resulting stable manifold is represented in orange in Figure \ref{fig:Parts_of_proof}.

The third and final stage is to validate a numerical approximation of the corresponding boundary-value problem. For this case, we set \(\theta = 1\) and \(\Lr = 1 + 2\pi\) in \eqref{eq:BVP}. The value for the Taylor variable is
\(
\bar{\sigma} = 0.927447198734628.
\)
As in the previous stages, we fix a norm weight \(\omega = 1.05\) in \eqref{eq:S_F} and compute the following bounds:
\[
Y(\bar{x}_{\sC}, r_{\sTF}) = 7.814019760054922 \cdot 10^{-7}, \quad Z_1(\bar{w}, \bar{x}_{\sF}) = 0.9076283031949424 \quad
\]
\[
\text{and} \quad Z_2(\bar{x}_{\sC}, r_{\sTF}^{*}) = 372.96640912543626,
\]
as defined in \eqref{eq:soliton_Y}, \eqref{eq:soliton_Z1}, and \eqref{eq:soliton_Z2}. These bounds satisfy
\[
Z_{1}(\bar{x}_{\sC}, r_{\sTF}) < 1 \quad \text{and} \quad Z_{2}(\bar{x}_{\sC}, r_{\sC}^{*}) <  \frac{\bigl(1 - Z_{1}(\bar{x}_{\sC}, r_{\sTF})\bigr)^{2}}{2Y(\bar{x}_{\sC}, r_{\sTF})} 
\]
for \(r_{\sC}^{*} = 10^{-2}\). Notice that \(|\bar{\sigma} + r_{\sC}^{*}| < 1\), as required by Lemma~\ref{lemma:soliton_Z2}. We then apply Theorem~\ref{Theorem:Soliton} to obtain a solution \(x = (\sigma, s) \in X_{\sC}\) such that
\[
s^{(i)}(t) = s^{(i)}_0 + 2 \sum_{m\geq 1} s^{(i)}_m T_m(t), \quad i = 1,2,3,4,
\]
solves the boundary-value problem \eqref{eq:BVP_soliton} (after reverting the scaling in time) in the ball centered at \(\bar{x} = (\bar{\sigma}, \bar{s})\) with radius 
\(
r_{\sC} = 8.617584260554394 \cdot 10^{-6}.
\)
The solution of the boundary-value problem is depicted in blue in Figure~\ref{fig:Parts_of_proof}. 

As described in the introduction, by taking the even extension of a solution to the boundary-value problem \eqref{eq:BVP_soliton}, we obtain a soliton solution to the Gross-Pitaevskii equation \eqref{eq:GP_ode}. The proven soliton solution satisfies
    \[
        \|\bar{s}^{(i)} - s^{(i)}\|_{\infty}= \sup_{t\in[-1,1]} \|\bar{s}^{(i)}(t) - s^{(i)}(t)\|_{\sC} \leq r_{\sC} =8.617584260554394 \cdot 10^{-6} , \quad i = 1,2,3,4. 
    \]
    Observe that, by \eqref{eq:parameterization_bound}, the error bound also applies to the portion of the solution obtained via the parameterization of the stable manifold, not just the segment in the boundary-value problem region. The proofs with interval arithmetic of all the inequalities above are included in \cite{github_codes}.   
\end{proof}

The implementation of our method can be easily adapted to different sets of parameters. For example, we provide existence proofs for the solitons presented in \cite{Alfimov_2002}, including the one- and two-hump solutions illustrated in Figure~\ref{fig:other_proofs}.

\begin{figure}[h!]
    \centering
    \begin{subfigure}[b]{0.49\textwidth}
        \centering
        \includegraphics[width=\textwidth]{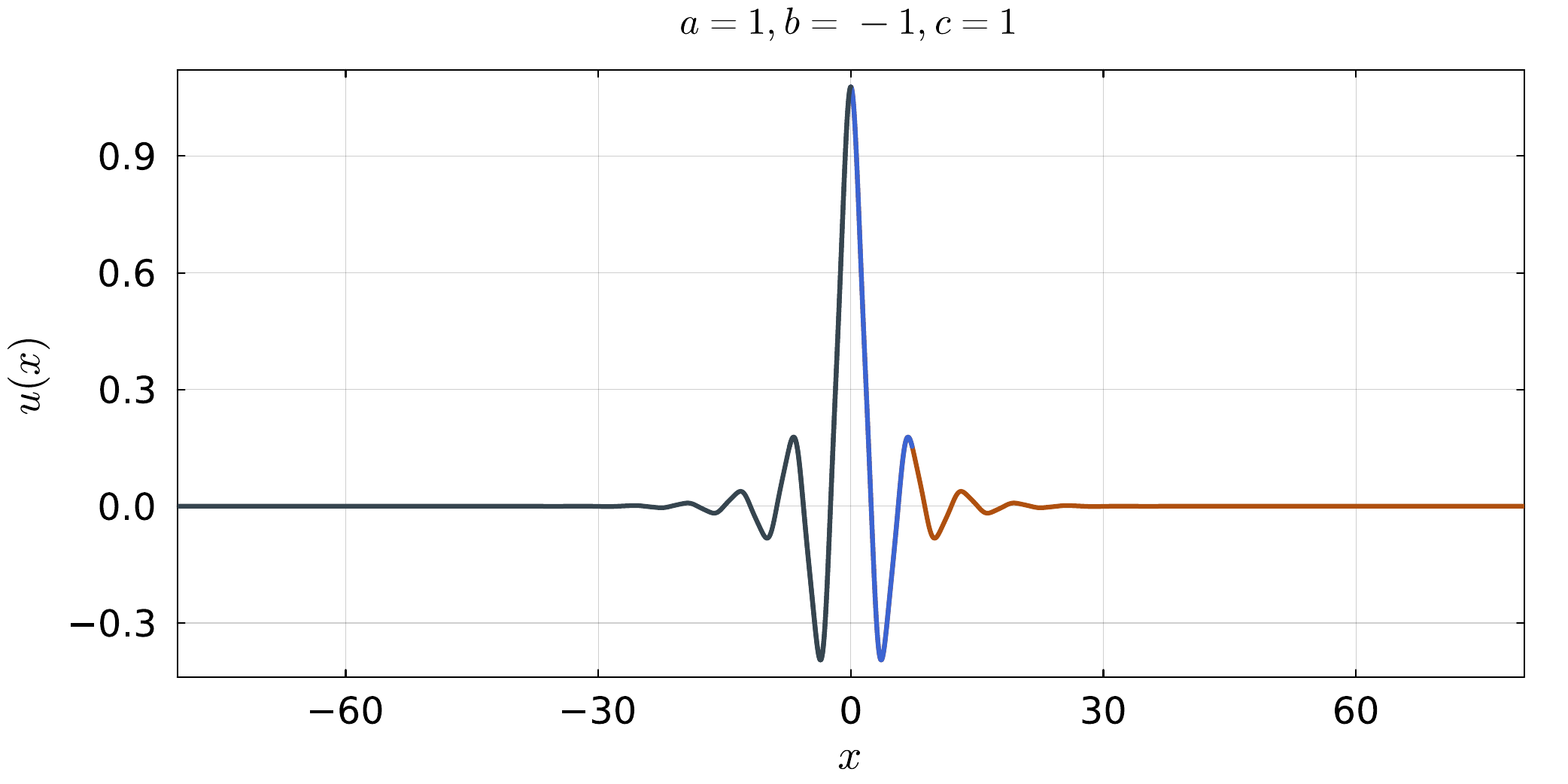}
    \end{subfigure}
    \hfill
    \begin{subfigure}[b]{0.49\textwidth}
        \centering
        \includegraphics[width=\textwidth]{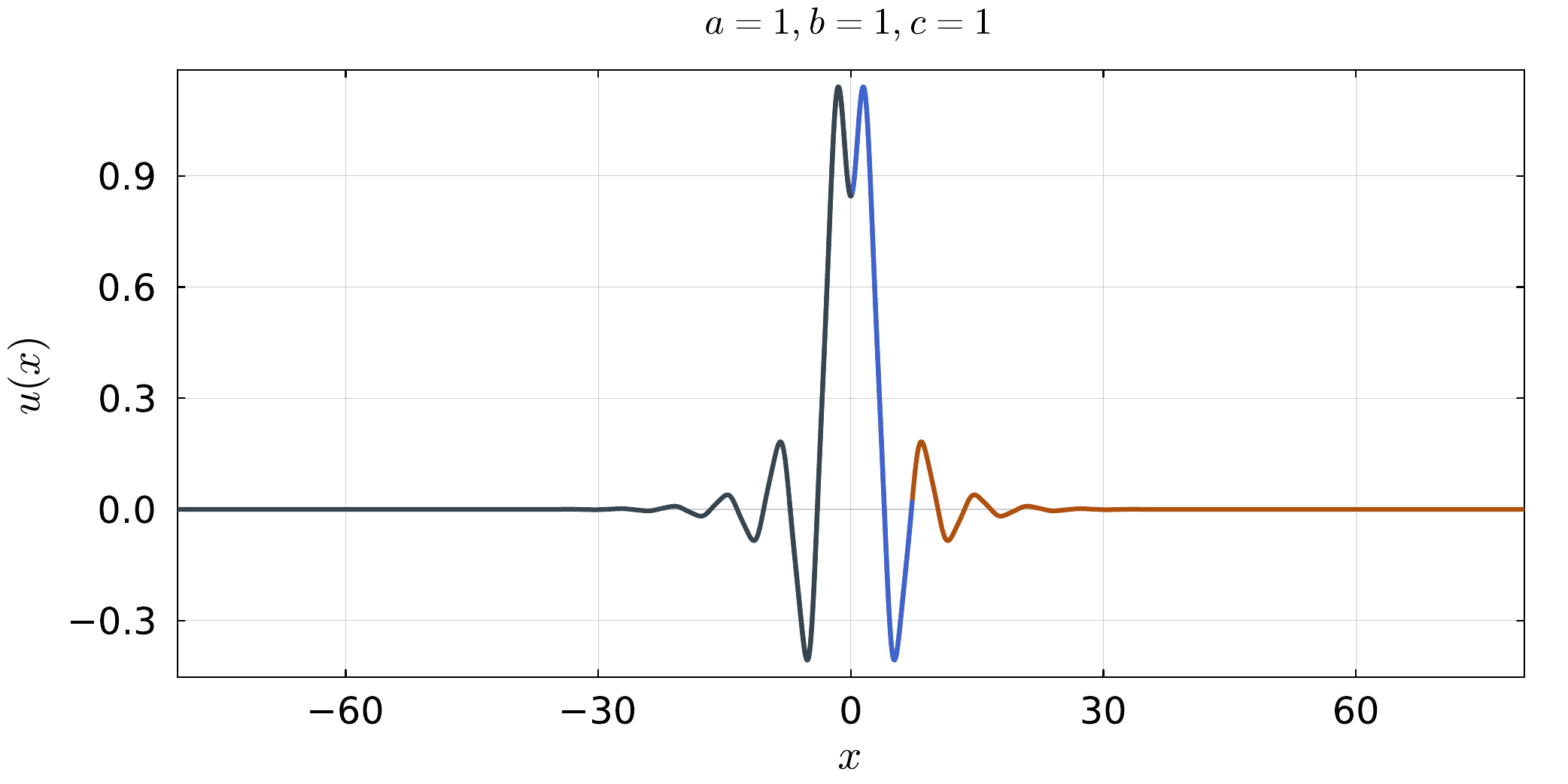}
    \end{subfigure}
    \caption{
Computer-assisted proofs for numerically approximated solitons originally presented in \cite{Alfimov_2002}. The parameters of the equation are shown at the top of the figure. The illustration also shows the main components of our approach: the solution to the boundary-value problem (blue), the stable manifold (orange), and the even extension of the soliton (black).
 }%
    \label{fig:other_proofs}
\end{figure}

\begin{theorem}
    The Gross-Pitaevskii equation \eqref{eq:GP_ode} with parameters $a=1$, $b=-1$ and $c=1$, has a soliton solution.
    \label{theorem:second_solution}
\end{theorem}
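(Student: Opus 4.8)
The plan is to run the same three-stage computer-assisted argument used in the proof of Theorem~\ref{theorem:main}, now with the parameter set $a=1$, $b=-1$, $c=1$. First I would produce, by numerical continuation and Newton's method applied to the well-conditioned maps \eqref{eq:bundle_F}, \eqref{eq:manifold_F}, and \eqref{eq:soliton_F}, three approximate zeros $\bar{x}_{\sF}\in X_{\sF}$, $\bar{x}_{\sTF}\in X_{\sTF}$, and $\bar{x}_{\sC}\in X_{\sC}$ with finite support as in \eqref{eq:truncations}, together with small residuals as in \eqref{eq:residual}. The truncation modes $M_{\sF}$, $N_{\sT}$, $M_{\sC}$ are to be chosen large enough that the tail coefficients have decayed to the level of double-precision rounding error; since $|b|$ is here larger than in Theorem~\ref{theorem:main} and $c>0$ (the repulsive/self-defocusing regime studied in \cite{Alfimov_2002}), the soliton profile may be broader or decay more slowly, so somewhat larger modes, a possibly smaller geometric weight $\nu$ (and $\omega$), and a suitable choice of $\theta$, $\Lr$ (hence $\kappa=\Lr/2$) in \eqref{eq:BVP} may be required.

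Second, I would apply Theorem~\ref{Theorem:Bundle}: verify that $\bar{\lambda}<0$ and real, enforce the conjugacy symmetry \eqref{eq:conjugate_assumption_bundle} on $\bar{v}$, take $\bar A(\bar{x}_{\sF})$ as the numerical inverse of the truncated derivative, and then check with interval arithmetic that the computable bounds \eqref{eq:bundle_Y}, \eqref{eq:bundle_Z1}, \eqref{eq:bundle_Z2} satisfy $Z_1(\bar{x}_{\sF})<1$ and $Z_2(\bar{x}_{\sF})<(1-Z_1(\bar{x}_{\sF}))^2/(2Y(\bar{x}_{\sF}))$. This yields a validated stable Floquet exponent $\lambda<0$ and stable bundle $v$ with an explicit radius $r_{\sF}$. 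Third, feeding $(\lambda,v)$ and $r_{\sF}$ into Theorem~\ref{Theorem:Manifold}, I would verify $Z_1(\bar{w},\bar{x}_{\sF})<1$ and $Z_2(\bar{w},r_{\sTF}^{*})<\min\bigl((1-Z_1(\bar{w},\bar{x}_{\sF}))^2/(2Y(\bar{w},r_{\sF})),\,r_{\sTF}^{*}\bigr)$ for a suitable $r_{\sTF}^{*}$, obtaining a parameterization $W$ of $W^s_{\rm loc}(\gamma)$ with the $C^0$ enclosure \eqref{eq:parameterization_bound} and radius $r_{\sTF}$.

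Finally, with $W$ and $r_{\sTF}$ in hand, I would apply Theorem~\ref{Theorem:Soliton}: fix $\theta$ and $\Lr$, take $\bar{x}_{\sC}=(\bar{\sigma},\bar{s})$ with $-1<\bar{\sigma}<1$, choose $r_{\sC}^{*}$ with $|\bar{\sigma}+r_{\sC}^{*}|<1$, and check $Z_1(\bar{x}_{\sC},r_{\sTF})<1$ together with $Z_2(\bar{x}_{\sC},r_{\sC}^{*})<\min\bigl((1-Z_1(\bar{x}_{\sC},r_{\sTF}))^2/(2Y(\bar{x}_{\sC},r_{\sTF})),\,r_{\sC}^{*}\bigr)$ via the bounds \eqref{eq:soliton_Y}, \eqref{eq:soliton_Z1}, \eqref{eq:soliton_Z2}. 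This produces a validated solution $x_{\sC}=(\sigma,s)$ of the BVP \eqref{eq:BVP_soliton} on $[0,\Lr]$; since $u_3(x)=\cos(2x)$ solves $u_3''=-4u_3$ with $(u_3(0),u_3'(0))=(1,0)$ and $u_2(0)=0$, the even extension $u(x)=u_1(|x|)$ is a soliton of \eqref{eq:GP_ode}, establishing the claim (and simultaneously giving an explicit $C^0$ error bound $r_{\sC}$ against $\bar{u}$, as in Figure~\ref{fig:other_proofs}). The main obstacle is not conceptual but computational: for this repulsive-nonlinearity parameter set one must find a sufficiently accurate numerical soliton and then tune $\nu$, $\omega$, the truncation dimensions, and especially the domain length $\Lr$ and base point $(\theta,\bar{\sigma})$ so that the three radii-polynomial inequalities close — in particular keeping $Z_1(\bar{w},\bar{x}_{\sF})$ and $Z_1(\bar{x}_{\sC},r_{\sTF})$ safely below $1$ despite the larger potential amplitude, and controlling the stiff factor $1/(1-|\bar{\sigma}|)$ that enters the manifold-evaluation bounds through the choice of $\Lr$.
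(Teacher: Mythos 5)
Your proposal is correct and follows essentially the same three-stage computer-assisted argument (bundle, manifold, BVP) that the paper uses; the paper's own proof simply records the specific truncation modes $M_{\sF}=30$, $N_{\sT}=30$, $M_{\sC}=56$ and keeps the same weights $\nu=\omega=1.05$ and scaling $l=0.5$ as in Theorem~\ref{theorem:main}, rather than tuning them further as you anticipate might be needed.
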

\begin{proof}
The proof proceeds analogously to that presented in Theorem~\ref{theorem:main}. For this proof, we set the truncation modes as follows
\[
M_{\sF} = 30,\quad N_{\sT} = 30,\quad M_{\sC} = 56.
\]
We fix the norm weight \(\nu = 1.05\) for the norm in \eqref{eq:S_F} and \(\omega = 1.05\) for the norm in \eqref{eq:S_C}. We use a bundle scaling factor \(l = 0.5\).
\end{proof}

\begin{theorem}
    The Gross-Pitaevskii equation \eqref{eq:GP_ode} with parameters $a=1$, $b=1$ and $c=1$, has a soliton solution.
\end{theorem}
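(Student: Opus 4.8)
The plan is to repeat verbatim the three–stage computer-assisted scheme already used for Theorem~\ref{theorem:main} and Theorem~\ref{theorem:second_solution}, now with the parameter set $a=b=c=1$. First I would produce, by Newton's method on the well-conditioned maps \eqref{eq:bundle_F}, \eqref{eq:manifold_F}, and \eqref{eq:soliton_F}, computable approximations $\bar{x}_{\sF}\in X_{\sF}$, $\bar{x}_{\sTF}\in X_{\sTF}$, and $\bar{x}_{\sC}\in X_{\sC}$ supported on finitely many modes as in \eqref{eq:truncations}, refined until the residuals in \eqref{eq:residual} reach the level of $10^{-13}$. The truncation modes $M_{\sF}$, $N_{\sT}$, $M_{\sC}$ are chosen large enough that the trailing coefficients of each sequence have decayed to double-precision rounding error; their concrete values, together with the data of $\bar{x}_{\sF}$, $\bar{x}_{\sTF}$, $\bar{x}_{\sC}$, would be recorded in \cite{github_codes}. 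I would also fix exponential weights $\nu$ and $\omega$ (for instance $\nu=\omega=1.05$), a phase-condition scale $l=1/2$, and enforce in the implementation the conjugate symmetries \eqref{eq:conjugate_assumption_bundle} on $\bar{v}$ and on $\bar{w}$, as well as $\bar{\lambda}<0$ real.

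Next I would run the three validation stages in order. \emph{Stage 1 (stable bundle).} Evaluate the interval-arithmetic bounds $Y(\bar{x}_{\sF})$, $Z_1(\bar{x}_{\sF})$, $Z_2(\bar{x}_{\sF})$ from \eqref{eq:bundle_Y}--\eqref{eq:bundle_Z2}, check $Z_1<1$ and $Z_2<(1-Z_1)^2/(2Y)$, and apply Theorem~\ref{Theorem:Bundle} to obtain the validated stable exponent $\lambda<0$ and bundle $v$ within radius $r_{\sF}$. \emph{Stage 2 (stable manifold).} Feeding $(\lambda,v)$ into the manifold problem, pick a trial radius $r_{\sTF}^{*}$ (e.g.\ $10^{-3}$), evaluate $Y(\bar{w},r_{\sF})$, $Z_1(\bar{w},\bar{x}_{\sF})$, $Z_2(\bar{w},r_{\sTF}^{*})$ from \eqref{eq:manifold_Y}--\eqref{eq:manifold_Z2}, verify the hypotheses of Theorem~\ref{Theorem:Manifold}, and obtain the validated parameterization $W$ of $W_{loc}^{s}(\gamma)$ with $C^0$ error $r_{\sTF}$. \emph{Stage 3 (boundary value problem).} With $W$ fixed, choose $\theta$ and $\Lr$ (so $\kappa=\Lr/2$), solve the BVP \eqref{eq:BVP} numerically for the remaining unknown $\bar{\sigma}$ with $-1<\bar{\sigma}<1$, pick a trial radius $r_{\sC}^{*}$ with $|\bar{\sigma}+r_{\sC}^{*}|<1$, evaluate $Y(\bar{x}_{\sC},r_{\sTF})$, $Z_1(\bar{x}_{\sC},r_{\sTF})$, $Z_2(\bar{x}_{\sC},r_{\sTF}^{*})$ from \eqref{eq:soliton_Y}--\eqref{eq:soliton_Z2}, and apply Theorem~\ref{Theorem:Soliton} to get a true solution $x_{\sC}=(\sigma,s)$ of \eqref{eq:BVP_soliton} within radius $r_{\sC}$. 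Finally, because $u_3(x)=\cos(2x)$ solves $u_3''=-4u_3$ with $(u_3(0),u_3'(0))=(1,0)$ and the BVP imposes $u_2(0)=0$, the even extension $u(x)=u_1(|x|)$ of the first component is a soliton of \eqref{eq:GP_ode}, with the $C^0$ bound $\|u-\bar{u}\|_\infty\le r_{\sC}$ following from $|\sigma|\le1$ and $\nu,\omega\ge1$.

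The main obstacle is quantitative rather than conceptual. For $c=1$ the GP equation is in the repulsive (self-defocusing) regime, where generic trajectories of \eqref{eq:4D_system} blow up in finite $x$ (cf.\ \cite{Alfimov_2002}); hence the first difficulty is to locate numerically an accurate non-blow-up soliton by parameter continuation, which in turn may force a relatively large Chebyshev truncation $M_{\sC}$ because the profile can be comparatively oscillatory. The second, related difficulty is the delicate tuning of $\Lr$, $\theta$, the weights $\nu,\omega$, and the trial radii $r_{\sTF}^{*},r_{\sC}^{*}$ so that all three pairs of Newton--Kantorovich inequalities hold simultaneously: in practice $Z_1$ at the manifold and BVP stages sits close to $1$, so the slack in $Z_2<(1-Z_1)^2/(2Y)$ is thin and a sufficiently small residual in \eqref{eq:residual} is essential. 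Once a good enough approximation and parameter choice are secured, the interval-arithmetic evaluation of all bounds in the hypotheses of Theorems~\ref{Theorem:Bundle}, \ref{Theorem:Manifold}, and~\ref{Theorem:Soliton} is routine, and the explicit values of $Y$, $Z_1$, $Z_2$ and the radii $r_{\sF},r_{\sTF},r_{\sC}$ are produced by the code in \cite{github_codes}.
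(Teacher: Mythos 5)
Your proposal reproduces the paper's own argument: the proof given there is simply ``analogous to Theorem~\ref{theorem:main}, using the same truncation modes and weights as Theorem~\ref{theorem:second_solution}'' ($M_{\sF}=N_{\sT}=30$, $M_{\sC}=56$, $\nu=\omega=1.05$, $l=0.5$), i.e.\ the same three-stage validation via Theorems~\ref{Theorem:Bundle}, \ref{Theorem:Manifold}, and~\ref{Theorem:Soliton} with interval-arithmetic verification of the Newton--Kantorovich inequalities. Your extra remarks on the $c>0$ blow-up landscape and the tuning of $\Lr$, $\theta$, and the trial radii are sensible practical commentary but do not change the method.
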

\begin{proof}
The proof is analogous to that presented in Theorem~\ref{theorem:main}. We use the same parameters as in Theorem~\ref{theorem:second_solution}.
\end{proof}

\subsubsection*{Author Contributions Statement} All authors equally contributed to this manuscript and should be listed in alphabetical order.

\subsubsection*{Funding} 
Not applicable.



\bibliographystyle{unsrt}
\bibliography{references}

\end{document}